\documentclass[1pt]{article}
\usepackage{amsmath, amssymb, amsthm, amsfonts, cases}
\usepackage{mathrsfs}
\usepackage{url}
\usepackage{authblk}
\usepackage[usenames]{color}
\usepackage{geometry}
\usepackage{stmaryrd}
\geometry{left=2cm,right=2cm,top=3cm,bottom=3cm}

\allowdisplaybreaks

\theoremstyle{plain}
\newtheorem{thm}{Theorem}[section]

\newtheorem{pro}[thm]{Problem}
\newtheorem{lem}[thm]{Lemma}
\newtheorem{prop}[thm]{Proposition}

\theoremstyle{definition}
\newtheorem{defn}{Definition}[section]
\newtheorem{ass}{Assumption}[section]
\newtheorem{rmk}{Remark}[section]

\newcommand{\la}{\langle}
\newcommand{\ra}{\rangle}

\makeatletter\@addtoreset{equation}{section} \makeatother

\DeclareMathOperator*{\essinf}{ess\,inf}

\begin{document}
\title{Optimal controls of stochastic differential equations with jumps and random coefficients: Stochastic Hamilton-Jacobi-Bellman equations with jumps
\thanks{Q. Meng was partially supported by the National Natural Science Foundation of China (No. 11871121), and by the Natural Science Foundation of Zhejiang Province
	(No. LY21A010001). Y. Shen  was partially supported by the Australian Research Council (No. DE200101266). S. Tang  was supported by National Science Foundation of China (Grant No. 11631004) and National
	Key R\&D Program of China (Grant No. 2018YFA0703903)}}

\date{}
\author[a]{Qingxin Meng}
\affil[a]{\small  Department of Mathematical Sciences, Huzhou University, Zhejiang 313000, China }

\author[b]{Yuchao Dong,\footnote{Corresponding author. Email: dyc19881021@icloud.com}}
\affil[b]{\small{Institution of Operations Research and Analytics, National University of Singapore}}

\author[c]{Yang Shen}
\affil[c]{\small{School of Risk and Actuarial Studies and CEPAR,
University of New South Wales Sydney, NSW 2052, Australia}}

\author[d]{Shanjian Tang}
\affil[d]{\small Department of Finance and Control Sciences, School of Mathematical Sciences, Fudan University, Shanghai 200433, China}

\maketitle

\begin{abstract} In this paper, we study the following
nonlinear backward stochastic integral partial differential equation with  jumps
\begin{equation*}
\left\{
\begin{split} -d V(t,x) =&\displaystyle\inf_{u\in U}\bigg\{H(t,x,u, DV(t,x),D \Phi(t,x), D^2 V(t,x),\int_E \left(\mathcal I V(t,e,x,u)+\Psi(t,x+g(t,e,x,u))\right)l(t,e)\nu(de)) \\
&+\displaystyle\int_{E}\big[\mathcal I V(t,e,x,u)-\displaystyle
(g(t, e,x,u), D V(t,x))\big]\nu(d e)+\int_{E}\big[\mathcal I \Psi(t,e,x,u)\big]\nu(d e)\bigg\}dt\\
&-\Phi(t,x)dW(t)-\displaystyle\int_{E} \Psi (t, e,x)\tilde\mu(d e,dt),\\
V(T,x)=& \ h(x),
\end{split}
\right.
\end{equation*}
where $\tilde \mu$ is a Poisson random martingale measure, $W$ is a Brownian motion, and $\mathcal I$ is a non-local operator to be specified later. The function $H$ is a given random mapping, which arises from a corresponding non-Markovian optimal control problem. This equation appears as the
stochastic Hamilton-Jacobi-Bellman equation, which characterizes the value function of the optimal control problem with a recursive utility cost functional. The solution to the equation is a predictable triplet of
random fields $(V,\Phi,\Psi)$. We show that the value function, under some regularity assumptions, is the solution to the stochastic HJB equation; and a classical solution to this equation is the value function and gives the optimal control. With some additional assumptions on the coefficients, an existence and uniqueness result in the
sense of Sobolev space is shown by recasting the backward stochastic
partial integral differential equation with jumps as a backward
stochastic evolution equation in Hilbert spaces with Poisson jumps.

\end{abstract}

\textbf{Keywords}: Stochastic control; Dynamic programming; Stochastic HJB equation;
Stochastic partial differential equation

\maketitle

\section{Introduction}

Backward
stochastic partial differential equations (BSPDEs) are  natural
generalization of backward stochastic differential equations (BSDEs).
The theory of BSDEs has been well developed, dating back to \cite{Bism76}
and \cite{PaPe90} on the linear and nonlinear cases, respectively.
A systematic account of the theory and application of BSDEs is available
in \cite{KPQ97}, \cite{ZhYo}, \cite{Zhang2017}, and the references
therein. In the jump cases, BSDEs have also been studied by many
authors (see, for example, \cite{tang1994necessary}, \cite{Situ97}, and \cite{FOS05}).
In recent years, there has been growing interest on BSPDEs, partly driven by
its wide variety of applications in stochastic
optimal control theory and mathematical finance. For instance, BSPDEs serve as adjoint equations in
Pontryagin's maximum principle when the controlled system is a
stochastic partial differential equations. For this line of research, one can refer to
\cite{Bens92}, \cite{NaNi90}, \cite{HuPe91}, \cite{Zhou93},
\cite{MaYo97}, \cite{Tang98a}, and \cite{Tang05}. Recently, \cite{BQY2020} studied the option pricing
problems under rough volatility models and proved that the value function satisfies a BSPDE.

The backward stochastic Hamilton-Jacobi-Bellman (HJB) equations,
which is a class of fully nonlinear BSPDEs, was first introduced by Peng \cite{Peng92a}
in the study of stochastic control systems driven by Brownian motions, where the
coefficients of the control systems are allowed to be random.
The stochastic HJB equations have a stochastic control interpretation as
the classical deterministic HJB equation of which the classical
solution is the value function of the stochastic optimal control
problem with deterministic coefficients. In \cite{Peng92a},
for the case  where the diffusion term of the system does not
contain the control variables, the existence and uniqueness of
adapted solutions to the stochastic HJB equations driven by Brownian
motions in the sense of Sobolev space were established by regarding
the equations as a class of backward stochastic evolution equations
for Hilbert space valued processes. However, in general the existence and
uniqueness results of the classical solution for the stochastic HJB
equations have been still an open problem.

The purpose of this paper is to study a stochastic HJB equation driven by both Brownian motions and Poisson jumps.
More specifically, the stochastic HJB equation studied in this paper
is associated with stochastic optimal problems driven
by Brownian motions and Poisson jumps simultaneously and with the recursive utility type cost functional,
which is given by a BSDE. In economics and finance, recursive utilities have been used to disentangle the investor's risk aversion and intertemporal substitution, see \cite{epstein1991substitution}.
See also \cite{Duffie1992} for a stochastic differential formulation of recursive utility, namely stochastic differential utility. Using the BSDE theory, the stochastic differential utility was extended to the case with multiple priors
that distinguish risk and ambiguity in a unified framework, see \cite{chen2002ambiguity}.
Using dynamic programming principle, we first show that the value function, under some regularity assumptions, is the solution to the stochastic HJB equation.
However, this result does not apply to a general case since in general the value function does not satisfy these regularity assumptions. On the other hand, we provide a verification theorem and show that if the stochastic HJB equation admits a classical solution which is a triplet of random fields and satisfies sufficient regularity conditions, then the first component of the triplet coincides with the value function of the optimal control problem. Prior to our work, Li and Peng \cite{li2009stochastic} also studied the optimal control problem with jumps. Unlike our framework accommodating random coefficients, they focused on the case with deterministic coefficients, thereby proving the relation between value function and the viscosity solution of (deterministic) HJB equation. Allowing the coefficients to be random results in
the main difficulty of our study. For example, in \cite{li2009stochastic}, the authors showed that the value function is continuous in $t$. Unfortunately, this is not the case in our framework. Moreover, the viscosity solution of BSPDE is not well-studied. To our best knowledge, most works considered the control systems only driven by Brownian motions, see \cite{qiu2018viscosity} and the references therein.

Instead of considering the viscosity solution for the most general case, we study the stochastic HJB equation with jumps in the sense of Sobolev spaces
and prove the existence and uniqueness results in these spaces with some additional assumptions. To this end, the most important step
is that we first establish the existence and uniqueness results for
a class of backward stochastic evolution equations with jumps in
Hilbert spaces. Secondly, we recast the stochastic HJB equation with
jumps as the backward stochastic evolution equation with jumps in
Hilbert spaces, where the existence and uniqueness results are applicable. It is worth noting that {\O}ksendal et al. \cite {ORZ05}
investigated a class of semi-linear backward stochastic partial
differential equations with jumps which appear as adjoint equations
in the maximum principle for optimal control of stochastic partial
differential equations driven by Poisson jumps. Our formulation here is more general than theirs.
Thus, the results in \cite{ORZ05} can be covered by ours as special cases.

The rest of  this paper is organized as follows. In Section 2, we introduce some notations and preliminary results. The dynamic programming principle is given in Section 3. In Sections 4, we
provide the optimal control interpretation of the stochastic HJB
equations with jumps and a related verification theorem. Section 5
is devoted to studying the existence and uniqueness theory of
adapted solutions to backward stochastic evolution equations with random jumps. With this result,
we prove the existence and uniqueness results for a special class of the stochastic HJB equations with jumps.

\section{Notations, preliminary and problem formulation}


In this section, we first introduce basic notations, standing assumptions, and a preliminary result on
the \emph{essential infimum} of a family of nonnegative random variables. Then, we formulate the stochastic control
problem with jumps and random coefficients.

\subsection{Notations and Preliminary}

Let ${\cal T} : = [0, T]$ denote a fixed time interval of finite length, i.e., $T < \infty$.
We consider a complete probability space $( \Omega, {\mathscr F}, {\mathbb P} )$,
on which all randomness are defined. The space $( \Omega, {\mathscr F}, {\mathbb P} )$
is equipped with a right-continuous, ${\mathbb P}$-complete filtration ${\mathbb F} : = \{ {\mathscr F}_t | t \in {\cal T} \}$,
to be specified later. Furthermore, we assume that ${\mathscr F} _{T} = {\mathscr F}$. Denote by
${\mathbb E} [\cdot]$ the expectation with respect to ${\mathbb P}$, by $\mathscr{P}$
the predictable $\sigma$-algebra on $\Omega\times {\cal T} $ associated with $\mathbb F$,
and by $\mathscr B(\Lambda)$ the Borel $\sigma$-algebra of any topological space $\Lambda$.
Let $\{ W (t) | t \in {\cal T} \}
\triangleq \{ (W_1 (t), W_2 (t), \cdots, W_d (t))^{\top} | t \in \cal T \}$ be a $d$-dimensional
standard Brownian motion  under the
 probability $( \Omega, {\mathscr F}, {\mathbb P} )$, $(E,\mathscr B (E), \nu
)$ be a measure space with $\nu(E)<\infty$, and $\eta$ be a
stationary Poisson point process with the characteristic measure $\nu$ (see \cite{ikeda2014stochastic} for details). Then, the counting measure induced by $\eta$ is
$$
\mu((0,t]\times A)\triangleq\#\{s; s\leq t, \eta(s)\in A\},~~\mbox{for}~ t>0, ~ A\in \mathscr B (E),
$$
and $\tilde{\mu}(de,dt)\triangleq\mu(de,dt)-\nu(de)dt$ is a compensated Poisson random martingale  measure which
is assumed to be independent of the Brownian motion $\{ W (t) | t \in {\cal T} \}$.
Moreover, the filtration $\mathbb F$ is the $\mathbb P$-augmentation of the natural filtration generated by the Brownian motion
$\{ W (t) | t \in {\cal T} \}$ and the Poisson random measure $\{\mu((0,t]\times A)| t \in {\cal T},  A\in \mathscr B (E)\}$. Let $T_0$
be the set of all stopping times 
bounded by $T$. For any stopping time
$\tau\in T_0$, denote  by $T_\tau$ the set of all stopping times in $T_0$ and greater than $\tau$.

Let $H$ be any Hilbert space. The inner product in $H$ is denoted by
$(\cdot, \cdot),$ and the norm in $H$ is denoted by $||\cdot||_H$ or $|\cdot|$ whenever there is no risk of confusion. For a scalar-valued
function $\phi:\mathbb R^n \rightarrow \mathbb R$, we denote by
$\phi_x$, $D_x \phi$, or $D\phi$ its gradient and $\phi_{xx}$, $D_{x}^2 \phi$, or $D^2\phi$ its Hessian, which is a symmetric
matrix. For a vector-valued function $\phi: \mathbb R^n\rightarrow \mathbb R^k$ (with
$k\geq 2)$, $\phi_x=(\frac{\partial \phi_i}{\partial x_j})$ is
the corresponding $(k\times n)$ Jacobian matrix. For any two stopping times $\tau$ and $\gamma$,
the corresponding stochastic interval is defined by the following set
\begin{eqnarray*}
\llbracket\tau,\gamma\rrbracket \triangleq \{ (t,\omega) \in [0,\infty) \times \Omega| \tau (\omega) \leq t \leq \gamma (\omega) \} .
\end{eqnarray*}

The following spaces will be frequently used in this paper: for any $\tau\in T_0$ and $\gamma\in T_\tau$,
\begin{enumerate}
\item[$\bullet$] $ M_{\mathscr{F}}^{2,p}(\tau,\gamma;H):$ the space of all $H$-valued
and ${\mathscr{F}}_t$-predictable processes $f=\{f(t,\omega)|
(t,\omega)\in \llbracket \tau,\gamma \rrbracket \times\Omega\}$ satisfying
$\| f \|^p_{{M}_{\mathscr F}^{2,p} (\tau,\gamma; H)} \triangleq
{\mathbb E} \big [ (\int_\tau^\gamma | f(t) |^2 d t)^p \big ]  < \infty$;

\item[$\bullet$] $M_{\mathscr{F}}^{2}(\tau,\gamma;H):$ the space of all $H$-valued
and ${\mathscr{F}}_t$-predictable processes $f=\{f(t,\omega)|
(t,\omega)\in \llbracket \tau,\gamma \rrbracket \times\Omega\}$ satisfying
$\| f \|^p_{{M}_{\mathscr F}^{2,p} (\tau,\gamma; H)} \triangleq
{\mathbb E} \big [ \int_\tau^\gamma | f(t) |^2 d t \big ]  < \infty$, i.e., $M_{\mathscr{F}}^{2}(\tau,\gamma;H)
= M_{\mathscr{F}}^{2,1}(\tau,\gamma;H)$;

\item[$\bullet$] ${S}_{\mathscr F}^2 (\tau,\gamma; H)$: the space of all $H$-valued
and ${\mathscr{F}}_t$-predictable  c\`{a}dl\`{a}g processes
$f=\{f(t,\omega)| (t,\omega)\in\llbracket\tau,\gamma\rrbracket\times\Omega\}$ satisfying
$\| f \|_{{ S}_{\mathscr F}^2 (\tau,\gamma; H)} \triangleq \sqrt { {\mathbb E} \big [
\sup_{\tau \leq t\leq \gamma} | f(t) |^2 \big ] } < \infty$;

\item[$\bullet$] ${M}^{\nu,2}( E; H):$ the space of all $H$-valued measurable
  functions $r=\{r(e)| e \in E\}$ defined on the measure space $(E, \mathscr B(E); \nu)$ satisfying
$$\|r\|_{{ M}^{\nu,2}( E; H)}^2\triangleq{\displaystyle\int_E|r(\theta)|^2\nu(d\theta)}<\infty;$$

\item[$\bullet$] ${M}_{\mathscr{F}}^{\nu,2}{(\llbracket\tau,\gamma\rrbracket\times  E; H)}:$ the  space of all ${M}^{\nu,2}( E; H)$-valued
and ${\mathscr F}_t$-predictable processes $r=\{r(t,\omega,e)|
(t,\omega,e)\in\llbracket\tau,\gamma\rrbracket\times\Omega\times E\}$ satisfying
$$\|r\|_{M_{\mathscr F}^{v,2}(\llbracket\tau,\gamma\rrbracket\times  E; H)}^2\triangleq {\mathbb E\bigg[\int_\tau^\gamma\displaystyle\|r(t,\cdot)\|^2_
{{M}^{\nu,2}( E; H)}dt}\bigg]<\infty;$$

\item[$\bullet$] $L^2(\Omega,{\mathscr G},\mathbb P;H):$ the space of all $H$-valued, ${\mathscr G}$-measurable,
random variables $\xi$ defined on $(\Omega,{\mathscr G},P)$ satisfying  $\| \xi \|_{L^2(\Omega,{\mathscr G},P;H)} \triangleq\sqrt{\mathbb E|\xi|^2}$,
where $\mathscr G$ is a sub-algebra of $\mathscr F$.

\end{enumerate}

In what follows, we recall a classical theorem for the \emph{essential infimum}
of a family  of nonnegative random variables in a probability space (see  \cite[Appendix A]{karatzas1998methods}).

\begin{lem}\label{lem:2.1}
Let $\mathscr X$ be a family of nonnegative integrable random variables defined on the probability space $(\Omega, \mathscr F, \mathbb P)$.
Then there exists a  random variable $X^*$ such that
\begin{enumerate}
\item for all $X\in \mathscr X,$ $X\geq X^*$ a.s.;
\item if $Y$ is another random variable satisfying $ X\geq Y $ a.s., for all $ X\in \mathscr X$, then $X^*\geq Y$ a.s..
\end{enumerate}
The random variable $X^*$, which is unique a.s., is called the essential infimum of $\mathscr X$, and is denoted by $\essinf\mathscr X$
or $\essinf\limits_{X\in \mathscr X}X$.

Furthermore, if $\mathscr X$ is closed  under pairwise minimum (i.e., $X, Y\in \mathscr X$ implies
$X\wedge Y\in \mathscr X$), then there exists a nonincreasing  sequence  $\{Z_n\}_{n\in \mathbb N}$
of random variables in $\mathscr X$ such that $X^*= \lim\limits_{n\rightarrow\infty} Z_n$ a.s.. Moreover, for any sub-algebra $\mathscr G$ of $\mathscr F$,
the $\mathscr G$-conditional expectation is interchangeable with the essential infimum, that is,
\begin{eqnarray}
\mathbb E \left [ \essinf\limits_{X\in \mathscr X}X \bigg |\mathscr G \right ]= \essinf\limits_{X\in \mathscr X} \mathbb E[ X \big|\mathscr G] \text{ a.s.}.
\end{eqnarray}
\end{lem}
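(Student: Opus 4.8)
\noindent\emph{Proof sketch.} The plan is to follow the classical three–step route: (i) build a candidate $X^*$ as the pointwise infimum over a single, cleverly chosen \emph{countable} subfamily of $\mathscr X$; (ii) verify the two defining properties together with uniqueness; and (iii) extract the monotone approximating sequence and then use it to exchange the essential infimum with $\mathbb{E}[\cdot\,|\,\mathscr G]$. Since only the order structure matters and the members of $\mathscr X$ are merely integrable, I would first fix a bounded, strictly increasing map $g:[0,\infty)\to[0,1)$, say $g(x)=x/(1+x)$. For every countable $\mathscr C\subseteq\mathscr X$ the pointwise infimum $X_{\mathscr C}:=\inf_{X\in\mathscr C}X$ is $\mathscr F$-measurable with $g(X_{\mathscr C})\in[0,1)$, so $a(\mathscr C):=\mathbb{E}[g(X_{\mathscr C})]$ is well defined. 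Put $\alpha:=\inf\{a(\mathscr C):\mathscr C\subseteq\mathscr X\text{ countable}\}$, pick countable families $\mathscr C_n$ with $a(\mathscr C_n)\downarrow\alpha$, and set $\mathscr C^*:=\bigcup_n\mathscr C_n$, which is again countable. Defining $X^*:=X_{\mathscr C^*}$, monotonicity of $g$ and of the infimum in $\mathscr C$ gives $a(\mathscr C^*)\le a(\mathscr C_n)$ for all $n$, whence $\mathbb{E}[g(X^*)]=\alpha$.

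Next I would check the defining properties. For property (1), fix $X\in\mathscr X$; then $\mathscr C^*\cup\{X\}$ is countable, so $a(\mathscr C^*\cup\{X\})\ge\alpha=\mathbb{E}[g(X^*)]$, while $X_{\mathscr C^*\cup\{X\}}=X^*\wedge X\le X^*$ forces $\mathbb{E}[g(X^*\wedge X)]\le\mathbb{E}[g(X^*)]$; the two inequalities give equality, and since $g$ is strictly increasing this is possible only if $X^*\wedge X=X^*$ a.s., i.e. $X\ge X^*$ a.s. Property (2) is immediate: if $Y\le X$ a.s. for every $X\in\mathscr X$, then the exceptional sets for the countably many $X\in\mathscr C^*$ have union of measure zero, so $Y\le\inf_{X\in\mathscr C^*}X=X^*$ a.s. Uniqueness follows because any two random variables satisfying (1) and (2) dominate each other a.s. When $\mathscr X$ is closed under pairwise minimum, enumerate $\mathscr C^*=\{X_1,X_2,\dots\}$ and set $Z_n:=X_1\wedge\cdots\wedge X_n$; then $Z_n\in\mathscr X$, the sequence is nonincreasing, and $\lim_n Z_n=\inf_k X_k=X^*$ a.s.

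Finally, for the interchange with $\mathbb{E}[\cdot\,|\,\mathscr G]$, note that $\{\mathbb{E}[X|\mathscr G]:X\in\mathscr X\}$ is again a family of nonnegative integrable random variables, so its essential infimum exists by the part already proved. The inequality $\mathbb{E}[X^*|\mathscr G]\le\essinf_{X\in\mathscr X}\mathbb{E}[X|\mathscr G]$ follows by conditioning property (1): $\mathbb{E}[X^*|\mathscr G]$ is an a.s.\ lower bound for every $\mathbb{E}[X|\mathscr G]$, so property (2) of the essential infimum applies. For the reverse inequality I would use the monotone sequence: $0\le Z_n\downarrow X^*$ with $Z_n$ dominated by the integrable $Z_1$, so by the conditional dominated (equivalently, monotone) convergence theorem $\mathbb{E}[Z_n|\mathscr G]\downarrow\mathbb{E}[X^*|\mathscr G]$ a.s.; since each $Z_n\in\mathscr X$ we have $\mathbb{E}[Z_n|\mathscr G]\ge\essinf_{X\in\mathscr X}\mathbb{E}[X|\mathscr G]$, and letting $n\to\infty$ yields $\mathbb{E}[X^*|\mathscr G]\ge\essinf_{X\in\mathscr X}\mathbb{E}[X|\mathscr G]$ a.s.

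The routine parts are the null-set bookkeeping and the elementary monotonicity arguments. The one point that genuinely needs care is that the family $\{\mathbb{E}[X|\mathscr G]:X\in\mathscr X\}$ need not be closed under pairwise minimum, because conditional expectation does not commute with $\wedge$; hence the reverse inequality in the last step cannot be argued directly on that family and must be routed through the monotone sequence $Z_n$ living inside $\mathscr X$ itself. This is precisely why the closure hypothesis is invoked for that assertion.
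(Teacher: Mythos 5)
Your proof is correct and is essentially the classical argument that the paper itself does not reproduce but simply cites from Karatzas--Shreve (Appendix A): construct $X^*$ as the infimum over an optimizing countable subfamily via a bounded strictly increasing transformation, verify the two lattice properties, build the monotone sequence from the pairwise-minimum closure, and pass the conditional expectation through by conditional monotone convergence. Your closing observation --- that the interchange must be routed through the sequence $Z_n\in\mathscr X$ because $\{\mathbb{E}[X|\mathscr G]\}$ need not be closed under $\wedge$ --- is exactly the right point of care.
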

\begin{rmk}\label{rmk_essinf}
The above result can be extended to the family $\mathscr X$ of random variables that are uniformly bounded from below by another random variable $Y$, i.e., $X \ge Y$, $\forall X \in \mathscr X$. For that purpose, we only need to apply Lemma \ref{lem:2.1} to the family $\{ X-Y| X \in \mathscr X\}$ to get the desired result.
\end{rmk}

\subsection{Statement of the control problem}

Let $U$ be a nonempty  subset of $\mathbb R^m.$ In this paper, the admissible control is defined as follows:

\begin{defn}
For any  given $t \in {\cal T}$, a stochastic process $u(\cdot)$ defined on
$[t, T]\times \Omega$ is said to be an admissible control on the interval $[t, T]$,
if $u(s)\in U$ for almost $(s,\omega)\in [t, T]
\times \Omega$  and $ u(\cdot)$ is an $\{\mathscr F_s | s \in [t, T] \}$-predictable processes. 
The set of all admissible controls on the interval $[t, T]$ is denoted by $\mathscr V [t, T]$.
\end{defn}

In this paper, for any given initial state $x \in {\mathbb R}^n$ and admissible control $ u(\cdot) \in \mathscr V [0, T]$,
we consider the following  controlled stochastic differential equation driven by the Brownian motion $W$ and the Poisson random martingale measure $\tilde \mu$:
\begin{eqnarray}\label{eq:b3}
\left\{
\begin{aligned}
dX(s) =& \ b(s,X(s),u(s))ds+\sigma(s,X(s),u(s))d W(s)+ \int_Eg(s, e, X({s-}), u(s))\tilde\mu(d e, ds), \quad 0\leq s\leq T,\\
X(0) =& \ x_0\in \mathbb R^n,
\end{aligned}
\right.
\end{eqnarray}
where the coefficients $b$, $\sigma$, and $g$ are given
random mappings satisfying the following assumption:
\begin{ass}\label{ass:2.1}
\begin{enumerate}
\item[(i)] The mappings $b:{\cal T}\times\Omega\times \mathbb R^n\times U\longrightarrow \mathbb R^n$ and
$\sigma:{\cal T}\times \Omega\times \mathbb R^n\times U\longrightarrow\mathbb R^{n\times d}$ are
$\mathscr P\otimes {\mathscr B} (\mathbb R^n) \otimes {\mathscr B} ({U})$-measurable;
the mapping $g:{\cal T}\times \Omega\times {E}\times \mathbb R^n\times U\longrightarrow \mathbb R^n$
is  $\mathscr P \otimes {\mathscr B} ({E})\otimes {\mathscr B} (\mathbb R^n) \otimes {\mathscr B} ({U})$-measurable.
\item[(ii)] There exists a positive constant $C$
and  deterministic nonnegative function $\rho(e)$  such that
for all   $(u, u',x,x') \in U\times U\times {\mathbb R}^n
\times {\mathbb R}^n $ and a.e. $(t,\omega,e) \in  {\cal T} \times \Omega
\times E$,
\begin{eqnarray*}
\left\{
\begin{aligned}
&|b(t,x,u)-b(t,x',u')|+|\sigma(t,x,u)-\sigma(t,x',u')|\leq C \big(|x-x'|+|u-u'|\big),
\\& |g(t,e,x,u)-g(t,e,x',u')|\leq \rho(e) \big(|x-x'|+|u-u'|\big),
\\&
|b(t,x,u)|+|\sigma(t,x,u)|\leq C\big(1+|x|+|u|\big),
\\&
|g(t,e,x,u)|\leq \rho(e)\big(1+|x|+|u|\big),
\end{aligned}
\right.
\end{eqnarray*}
and
\begin{eqnarray*}
  \int_{E}\exp{\{\rho(e)\}}\nu(de)<\infty.
\end{eqnarray*}
\end{enumerate}
\end{ass}
Furthermore, we impose the following assumption on the control region.
 \begin{ass}\label{ass:3.1}
	The control domain $U$ is a compact subset of
	$\mathbb R^m$.
\end{ass}
Under Assumptions \ref{ass:2.1}-\ref{ass:3.1}, for any initial value $X(0)=x_0\in \mathbb R^n$ and admissible control $u(\cdot)\in \mathscr V [0, T]$,
the SDE \eqref{eq:b3} admits a unique strong solution satisfying $X^{0,x_0;u}(\cdot)\in {S}^p_{\mathscr F} ( 0, T; \mathbb R^n )$, for any $p>1$.  The solution $X(\cdot) \triangleq X^{0,x_0; u}(\cdot) $  to the SDE \eqref{eq:b3}
 is referred to as the state process
corresponding to the admissible control process $u (\cdot)$ and the pair of stochastic processes $( u (\cdot); X (\cdot) )$ is referred to as an admissible pair.

For any admissible pair $( u (\cdot); X (\cdot) )$,  consider the following BSDE with jumps
\begin{equation}\label{eq:b5}
\begin{split}
Y(t)=h(X(T))&+\int_t^T f(s,X(s),u(s),Y(s),Z(s),\int_E K(s,e)l(s,e)\nu(de))ds\\
&-\int_t^T Z(s)dW(s)-\int_t^T\int_E K(s,e)\tilde \mu(de,ds), \quad 0\le t\le T,
\end{split}
\end{equation}
where $f$, $h$, and $l$ are given random mappings
satisfying the following assumption:
\begin{ass}\label{ass:2.2}
\begin{enumerate}
\item[(i)] $f: {\cal T} \times \Omega \times \mathbb
R^n \times {U}\times \mathbb R \times \mathbb R^{d}\times\mathbb R  \rightarrow {\mathbb R}$
is  ${\mathscr P}\otimes {\mathscr B} (\mathbb R^n) \otimes {\mathscr B} ({U}) \otimes \mathcal B(\mathbb R)\otimes \mathcal B(\mathbb R^{d})\otimes \mathcal B( \mathbb R)$-measurable; $l:\mathcal T\times \Omega\times E \rightarrow \mathbb R$ is $\mathscr P \times \mathcal B(E)$-measurable; $h: \Omega \times \mathbb R^n \rightarrow {\mathbb R}$
is ${\mathscr F}_T\otimes {\mathscr B} (\mathbb R^n) $-measurable.
\item[(ii)] There exists a positive constant $C$ such that
for almost all $(t, \omega) \in \mathcal T \times \Omega$ and $(x, u, y, z, k), (x', u', y', z', k') \in \mathbb R^n \times U \times \mathbb R \times \mathbb R^d \times \mathbb R$,
\begin{eqnarray*}
\left\{
\begin{aligned}
&|f(t,x,u,y,z,k)-f(t,x',u',y',z',k')|+|h(x)-h(x')|\\
&\leq C \left\{\big(1+|x|+|x'|+|u|+|u'|\big) \big(|x-x'|+|u-u'|\big)+|y-y'|+|z-z'|+|k-k'| \right\},
\\&
|f(t,x,u,y,z,k)|+|h(x)|\leq C\big(1+|x|^2+|u|^2+|y|+|z|+|k|\big).
\end{aligned}
\right.
\end{eqnarray*}
\item[(iii)]  $k \mapsto f(t,x,u,y,z,k)$ is non-decreasing for all $(t,x,u,y,z) \in {\cal T} \times \mathbb
R^n \times {U}\times \mathbb R \times \mathbb R^{d}$;
there exists a positive constant $C$ such that $0\le l(t,e) \le C(1+|e|)$ for all $(t,e) \in {\cal T} \times E$.
\end{enumerate}
\end{ass}
From Assumption \ref{ass:2.2}, we see that \eqref{eq:b5} admits a unique solution $(Y,Z,K) \in \mathcal S^2_{\mathscr F}(0,T;\mathbb R) \times  \mathcal M^{2,1}_{\mathscr F}(0,T;\mathbb R^d)\times \mathcal M^{\nu,2}_{\mathscr F}([0,T]\times E;\mathbb R)$. Moreover, Condition (iii) in Assumption \ref{ass:2.2} ensures that the comparison principle holds (see \cite{li2009stochastic}). We may also denote the process as $(Y^{0,x_0;u},Z^{0,x_0;u},K^{0,x_0;u})$ to emphasize the dependence on the initial data and admissible control whenever necessary. The cost functional is defined as
$$
J(0,x_0;u(\cdot))=Y(0).
$$
In this paper, we also need the following assumption for the coefficient on the jump part, which ensures related stochastic flows to be invertible.
\begin{ass}\label{ass:c4}
	The map $\phi_{t,e,u}:x \rightarrow x+g(t,e,x,u)$ is homeomorphic from $\mathbb R^n$ to $\mathbb R^n$ and the inverse map $\psi_{t,e,u}$ has uniformly linear growth and is uniformly Lipschitz continuous. Moreover, there exists a positive  constant $\delta$ such that
	\begin{eqnarray}
	\big|\det (I+D_x g(t, e,x,u))
	\big|\geq \delta, ~~~\forall (t, e,x,u)
	\in {\cal T}\times {E}\times \mathbb R^n\times U.
	\end{eqnarray}
\end{ass}

Under Assumptions \ref{ass:2.1}-\ref{ass:2.2},
it is easy to check that $$|J(0,x_0; u(\cdot))|
<\infty.$$ Thus, the cost functional
\eqref{eq:b5} is well-defined. We are now ready to state our optimal control problem:

\begin{pro}\label{pro:2.1}
Find an admissible control process ${\bar u} (\cdot) \in \mathscr V [0, T]$ such that
\begin{eqnarray} \label{eq:b8}
J (0,x_0;{\bar u}(\cdot) ) = \inf_{u (\cdot) \in \mathscr V [0, T]} J (0,x_0; u (\cdot) )
\end{eqnarray}
subject to \eqref{eq:b3} and \eqref{eq:b5}.
\end{pro}
The admissible control ${\bar u} (\cdot)\in\mathscr V [0, T]$ satisfying (\ref{eq:b8}) is called an optimal control process of
Problem \ref{pro:2.1}. Correspondingly, the state process ${\bar X} (\cdot)$ associated with ${\bar u} (\cdot)$
is called an optimal state process, and $( {\bar u} (\cdot); {\bar X} (\cdot) )$ is called an optimal pair of
Problem \ref{pro:2.1}.

\section{Bellman's dynamic programming principle with jumps}

One of the key features of Problem \ref{pro:2.1} is that all the coefficients in the state equation \eqref{eq:b3}
and the cost functional \eqref{eq:b5} are stochastic processes or random variables.
Therefore, Problem \ref{pro:2.1} is indeed a non-Markovian optimal stochastic control problem.

Two of the most important approaches to solving stochastic optimal control problems are
Pontryagin's stochastic maximum principle and Bellman's dynamic programming principle.
In the former approach, a necessary  condition of optimality can be obtained
under certain regularity conditions of the system. On the other hand, the latter approach results
in different versions of HJB equations, which can be used to characterize the optimal control. We refer readers to \cite{tang1994necessary} for the general stochastic maximum principle for the control system driven by jump-diffusion processes. For a systematic account of the two approaches, one may refer
to the monograph \cite{ZhYo} and the references therein.

This paper is concerned with Bellman's dynamic programming principle and the associated stochastic HJB equation with jumps.
We first study the corresponding Bellman's dynamic programming principle for Problem \ref{pro:2.1}.
Note that the initial time $t=0$ and the initial state $X(0)=x_0$ are fixed in the formulation of Problem \ref{pro:2.1}.
The basic idea of the dynamic programming principle is, however, to consider a family of optimal control problems with
different initial times and states, to establish the relation among these problems, and finally to solve all these problems via a stochastic HJB equation.

To make this idea precise, we fix a set of  initial data $(\tau, \xi) \in T_0\times  L^2(\Omega,{\mathscr F_\tau}, \mathbb P; \mathbb R^n)$.
For any given admissible control $u(\cdot)\in\mathscr V[\tau, T]$, we consider the following state equation:
\begin{eqnarray} \label{eq:3.1}
\left\{
\begin{aligned}
dX(s)=& \ b(s,X(s),u(s))ds+\sigma(s,X(s),u(s))d W(s)+ \int_Eg(s, e, X({s-}), u(s))\tilde\mu(d e, ds), ~~~~~ \tau\leq s\leq T,\\
X(\tau)=& \ \xi.
\end{aligned}
\right.
\end{eqnarray}
The cost functional is defined by
\begin{eqnarray}\label{eq:b12}
\mathbb J(\tau,\xi; u(\cdot))=Y^{\tau,\xi;u} (\tau),
\end{eqnarray}
where $(Y^{\tau,\xi;u},Z^{\tau,\xi;u},K^{\tau,\xi;u})$ is the solution of \eqref{eq:b5} on time interval $[\tau,T]$.
Then, corresponding to the control system (\ref{eq:3.1}) and the cost functional (\ref{eq:b12}), the optimal control problem parameterized by $(\tau,\xi)\in T_0\times  L^2(\Omega,{\mathscr F_\tau}, \mathbb P; \mathbb R^n)$ is formulated as follows:

\begin{pro}[$D_{\tau,\xi}$]\label{pro:3.1}
Find an admissible control process ${\bar u} (\cdot) \in \mathscr V[\tau, T]$ such that
\begin{eqnarray}\label{eq:3.3}
\mathbb J (\tau, \xi; {\bar u}(\cdot) ) = \essinf_{u (\cdot) \in {\mathscr V[\tau, T]}} \mathbb J (\tau,\xi; u (\cdot) ).
\end{eqnarray}
\end{pro}
We denote the above optimal control problem by Problem $(D_{\tau,\xi})$ to stress the dependence on the initial state $(\tau, \xi)$. From Lemma \ref{lem_esti_sde} in the next subsection,
for any initial data $(\tau, \xi) \in T_0\times  L^2(\Omega,{\mathscr F_\tau}, \mathbb P; \mathbb R^n)$ and admissible control $u(\cdot)\in \mathscr V[\tau, T],$
the  state equation \eqref{eq:3.1} has
a unique strong solution $X(\cdot)\equiv
X^{t,\xi; u}(\cdot)\in {S}^2_{\mathscr F} ( \tau, T; \mathbb R^n )$ and the cost functional
\eqref{eq:b12} is well-defined. Furthermore, we can define the
following conditional minimal value system
\begin{eqnarray}\label{eq:3.4}
\mathbb V(\tau, \xi) \triangleq \mathop{\textrm{ess.}\inf}_{u (\cdot) \in
\mathscr V[\tau, T]} \mathbb J ( \tau,\xi; u(\cdot) ) .
\end{eqnarray}
Clearly, for any $(\tau, \xi) \in T_0\times  L^2(\Omega,{\mathscr F_\tau}, \mathbb P; \mathbb R^n)$,   $\mathbb V(\tau, \xi)$
is an $\mathscr F_\tau$-measurable
random variable.

\subsection{Preliminary Results}

In this subsection, we provide some preliminary results for the controlled SDE, which are needed in the following sections. The proof of the first two lemmas can be found in \cite{kunita2004stochastic}.

\begin{lem}\label{lem_esti_sde}
Let Assumptions \ref{ass:2.1}-\ref{ass:2.2} hold. Given $\tau \in T_0 $ and $p \ge 2$, SDE \eqref{eq:3.1} admits a unique solution for any $u(\cdot) \in\mathscr V[\tau, T]$ and $\xi\in L^p(\Omega,{\mathscr F_\tau}, \mathbb P; \mathbb R^n)$. Moreover, there exists a positive constant $C_p$ such that, for any $\tau$, $u(\cdot),\bar u(\cdot)\in\mathscr V[\tau, T]$, and $\xi,\bar \xi \in L^p(\Omega,{\mathscr F_\tau}, \mathbb P; \mathbb R^n)$, it holds that
	\begin{equation}\label{eq:3.15}
	\mathbb E\left[\sup_{\tau\le s \le T}|X^{t,\xi;u}(s)|^p \bigg |\mathcal F_\tau \right]\le C_p(1+|\xi|^p)
	\end{equation}
	and
	\begin{eqnarray} \label{eq:3.15}
	\mathbb E\bigg[ \sup_{\tau\leq s\leq T}
	|X^{t, \xi; u}(s)-X^{t, \bar \xi; \bar  u}(s)|^p\bigg|\mathscr F_{\tau}\bigg]\leq C_p
	\bigg(|\xi-\bar\xi|^p+\mathbb E\bigg[\int_\tau^T|u(s)-\bar u(s)|^pds\bigg|
	\mathscr F_\tau\bigg]\bigg).
	\end{eqnarray}
	Moreover, the solution $X(\cdot)$ satisfies the flow property, i.e., for any $ t\le\tau \le \gamma$ and $u \in \mathscr V[\tau,T]$,
	$$
	X^{t,x;u}(\gamma)=X^{\tau,X^{t,x;u}(\tau);u}(\gamma), \quad a.s..
	$$
\end{lem}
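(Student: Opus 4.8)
The statement to prove is Lemma \ref{lem_esti_sde}: existence/uniqueness, the two conditional moment estimates, and the flow property for the controlled SDE with jumps. Let me sketch a proof plan.

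---

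The plan is to treat the three assertions in turn, reducing each to a standard-but-conditional version of classical SDE arguments, with the jump term handled via the exponential-integrability hypothesis on $\rho$ in Assumption \ref{ass:2.1}.

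Existence and uniqueness. Since all coefficients are $\mathscr P$-measurable, globally Lipschitz in $(x,u)$ uniformly in $(t,\omega)$ (with the jump coefficient Lipschitz constant $\rho(e)$ satisfying $\int_E e^{\rho(e)}\nu(de)<\infty$, hence $\int_E \rho(e)^2\nu(de)<\infty$ since $\nu$ is finite), and of linear growth, I would fix $\xi\in L^p(\Omega,\mathscr F_\tau,\mathbb P;\mathbb R^n)$ and $u(\cdot)\in\mathscr V[\tau,T]$ and run a Picard iteration in the Banach space $S^p_{\mathscr F}(\tau,T;\mathbb R^n)$. The only nonstandard point is the jump integral: for $X,X'\in S^p_{\mathscr F}$, Kunita's first inequality (the Burkholder–Davis–Gundy-type bound for stochastic integrals against $\tilde\mu$) gives, for $p\ge 2$,
\[
\mathbb E\Big[\sup_{\tau\le s\le t}\Big|\int_\tau^s\!\!\int_E \big(g(r,e,X(r-),u(r))-g(r,e,X'(r-),u(r))\big)\tilde\mu(de,dr)\Big|^p\Big]
\le C_p\,\mathbb E\Big[\int_\tau^t\!\!\Big(\int_E|\Delta g|^2\nu(de)\Big)^{p/2}dr\Big]+C_p\,\mathbb E\Big[\int_\tau^t\!\!\int_E|\Delta g|^p\nu(de)\,dr\Big],
\]
and both right-hand terms are controlled by $\big(\int_E\rho(e)^2\nu(de)+\int_E\rho(e)^p\nu(de)\big)\mathbb E\int_\tau^t\sup_{r}|X-X'|^p\,dr$, the bracketed constant being finite by the exponential-integrability assumption. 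A fixed-point/contraction argument on a small interval, then patching, yields the unique solution; this is exactly the content cited from \cite{kunita2004stochastic}, so I would simply invoke it.

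The two estimates \eqref{eq:3.15}. These are the heart of the lemma. For the first, apply Itô's formula to $|X(s)|^p$, take $\sup_{\tau\le s\le t}$, then conditional expectation $\mathbb E[\cdot\,|\mathscr F_\tau]$; the drift and diffusion terms are handled by the linear-growth bound and BDG, the compensated jump integral is a conditional martingale (so its conditional expectation vanishes) while the quadratic-variation contribution from the jumps is again bounded via Kunita's inequality by the finite constant $\int_E(\rho(e)^2+\rho(e)^p)\nu(de)$ times $\mathbb E[\int_\tau^t(1+\sup|X|^p)\,dr\,|\mathscr F_\tau]$. Absorbing the $\sup$-term for small $t-\tau$ and invoking conditional Grönwall (which holds because $\mathscr F_\tau$ is fixed and all quantities are $\mathscr F_\tau$-measurable after conditioning) gives $\mathbb E[\sup|X|^p|\mathscr F_\tau]\le C_p(1+|\xi|^p)$ on a short interval; iterating over a partition of $[\tau,T]$ with the flow property propagates it to all of $[\tau,T]$. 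Note the constant must be shown independent of the particular $\tau\in T_0$, which follows because the Lipschitz/growth constants $C$, $\rho$ are deterministic and $U$ is bounded (Assumption \ref{ass:3.1}), so $|u(s)|$ is bounded uniformly. The second estimate is the same computation applied to the difference $X^{t,\xi;u}-X^{t,\bar\xi;\bar u}$: Itô on $|X-\bar X|^p$, conditional expectation, Lipschitz bounds producing $|u-\bar u|$ terms that go into the $\mathbb E[\int_\tau^T|u-\bar u|^p\,|\mathscr F_\tau]$ on the right, and conditional Grönwall.

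The flow property. For $t\le\tau\le\gamma$ and a control $u$, both $s\mapsto X^{t,x;u}(s)$ (restricted to $[\tau,\gamma]$) and $s\mapsto X^{\tau,X^{t,x;u}(\tau);u}(s)$ solve the same SDE \eqref{eq:3.1} on $[\tau,\gamma]$ with the same $\mathscr F_\tau$-measurable initial datum $X^{t,x;u}(\tau)$ and the same driving $W,\tilde\mu$; uniqueness of the strong solution (already established) forces them to coincide a.s. One subtlety is that $\tau$ is a stopping time, so the statement should really be read as an identity on $\llbracket\tau,\gamma\rrbracket$; I would justify it first for deterministic times and then use a standard approximation of stopping times by countably-valued ones (or invoke the strong-Markov-type argument) — this is again the content cited from \cite{kunita2004stochastic}.

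The main obstacle is making the conditional estimates genuinely rigorous: interchanging $\sup$, $\mathbb E[\cdot|\mathscr F_\tau]$, and the stochastic integrals, verifying that the compensated-jump term is a true conditional martingale, and running Grönwall's inequality in the conditional setting while keeping all constants independent of the stopping time $\tau$. Everything else is a routine adaptation of the classical jump-SDE theory, and since the paper explicitly attributes these facts to \cite{kunita2004stochastic}, the write-up can mostly cite that reference and only highlight where the exponential-integrability of $\rho$ enters.
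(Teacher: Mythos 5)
The paper gives no proof of this lemma at all; it simply refers the reader to Kunita's work on SDEs driven by L\'evy processes, which is exactly the reference you invoke. Your sketch (Picard iteration with Kunita's first inequality for the jump integral, It\^o plus conditional BDG and Gr\"onwall for the two moment estimates — correctly noting that the exponential integrability of $\rho$ supplies $\int_E\rho(e)^k\nu(de)<\infty$ and that compactness of $U$ is what removes the control from the first bound — and pathwise uniqueness for the flow property) is the standard argument behind that citation, so it is consistent with, and more detailed than, what the paper provides.
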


\begin{lem}\label{lem-gradient}
Let Assumptions \ref{ass:2.1}-\ref{ass:c4} hold. Then, the stochastic flow $X^{t,x;u}$ is an onto homeomorphism for any $t$ a.s.. Moreover, the gradient $\partial X^{t,x;u}$ of the stochastic flow is the solution of the following SDE:
\begin{align}
d \, \partial X^{t,x;u}(s)=&\ D_x b(s,X^{t,x;u}(s),u(s))\partial X^{t,x;u}(s)ds+D_x \sigma(s,X^{t,x;u}(s),u(s))\partial X^{t,x;u}(s)dW(s)\nonumber\\
&+ \int_E D_x g(s,e,X^{t,x;u}(s-),u(s))\partial X^{t,x;u}(s-)\tilde \mu(de,ds), \quad \partial X^{t,x;u}(t)=I,
\end{align}
where $D_x$ denotes the gradient with respect to $x$.

From the a priori estimates for linear SDEs, we immediately have that, for any $p\ge 2$, there exists a constant $C_p$ such that
$$
\mathbb E\left[ |\partial X^{t,x;u}(s)|^p  \right] \le C_p.
$$
\end{lem}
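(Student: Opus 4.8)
The plan is to reduce the statement to the classical theory of stochastic flows for continuous SDEs by exploiting the finite activity of the jump part. Since $\nu(E)<\infty$, for each fixed $t$ the point process $\eta$ has, almost surely, only finitely many jump times $t<\tau_1<\cdots<\tau_N\le T$ in $[t,T]$, with marks $e_i:=\eta(\tau_i)$. Writing $\int_E g\,\tilde\mu(de,ds)=\int_E g\,\mu(de,ds)-\int_E g\,\nu(de)\,ds$ and absorbing the compensator into the drift, the solution of \eqref{eq:3.1} is recovered by interlacing: on each stochastic interval $\llbracket\tau_i,\tau_{i+1}\rrbracket$ it agrees with the solution of the continuous SDE with drift $b(s,\cdot,u(s))-\int_E g(s,e,\cdot,u(s))\nu(de)$ and diffusion $\sigma(s,\cdot,u(s))$, while at each $\tau_i$ it is transformed by the jump map, $X(\tau_i)=\phi_{\tau_i,e_i,u(\tau_i)}(X(\tau_i-))$. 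The corrected drift is still globally Lipschitz with linear growth, since $\int_E\rho(e)\,\nu(de)\le\int_E e^{\rho(e)}\,\nu(de)<\infty$ by Assumption \ref{ass:2.1}(ii).

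For the homeomorphism property I would invoke the classical theory for continuous SDEs (\cite{kunita2004stochastic}): under global Lipschitz and $C^1$ hypotheses on the coefficients, the map $x\mapsto X(s)$ generated by each continuous piece is a.s. an onto homeomorphism of $\mathbb R^n$, and so is the associated inverse flow. At each jump time the state is acted on by $\phi_{\tau_i,e_i,u(\tau_i)}$, which by Assumption \ref{ass:c4} is a homeomorphism of $\mathbb R^n$ whose inverse $\psi_{\tau_i,e_i,u(\tau_i)}$ is uniformly Lipschitz with linear growth, the bound $|\det(I+D_xg)|\ge\delta$ ruling out degeneracy. On the event $\{\tau_N\le s<\tau_{N+1}\}$ the map $x\mapsto X^{t,x;u}(s)$ is then a finite composition of continuous-flow homeomorphisms and jump homeomorphisms, hence an onto homeomorphism; surjectivity and bicontinuity of the inverse follow by composing the respective inverses in reverse order and using the linear-growth bounds to ensure the inverse flow does not explode on $[t,T]$.

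The variational SDE is obtained by differentiating \eqref{eq:3.1} in $x$. Since the statement already refers to $D_xb$, $D_x\sigma$ and $D_xg$, we work under the implicit hypothesis that $b,\sigma,g$ are continuously differentiable in $x$ with derivatives bounded by the same constants $C$ and $\rho(e)$; the differentiability theorem for stochastic flows then gives that $x\mapsto X^{t,x;u}(s)$ is a.s. of class $C^1$ and that $\partial X^{t,x;u}$ solves the displayed linear SDE, obtained by interchanging $\partial_x$ with the stochastic integrals---again conveniently organized through the interlacing decomposition, with the jump of $\partial X$ at $\tau_i$ being left multiplication by $I+D_xg(\tau_i,e_i,X(\tau_i-),u(\tau_i))$.

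Finally, the moment bound is the standard a priori estimate for linear SDEs: applying It\^o's formula to $|\partial X^{t,x;u}(s)|^p$ and taking expectations, the Brownian terms are controlled by the Burkholder--Davis--Gundy inequality and the boundedness of $D_xb,D_x\sigma$, while the compensated jump term contributes $\int_E\bigl(|(I+D_xg(s,e,\cdot,u(s)))\partial X(s)|^p-|\partial X(s)|^p\bigr)\nu(de)\le\Bigl(\int_E\bigl((1+\rho(e))^p-1\bigr)\nu(de)\Bigr)|\partial X(s)|^p$, where $\int_E(1+\rho(e))^p\,\nu(de)<\infty$ because $(1+r)^p\le C_p\,e^{r}$ for all $r\ge 0$ together with Assumption \ref{ass:2.1}(ii); Gronwall's inequality applied to $s\mapsto\mathbb E|\partial X^{t,x;u}(s)|^p$, starting from $|\partial X^{t,x;u}(t)|=|I|$, then yields the uniform constant $C_p$. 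I expect the main obstacle to be the rigorous treatment of the homeomorphism property---especially surjectivity and joint continuity of the inverse---under merely the structural Assumption \ref{ass:c4}: one must check that the inverse flow built by alternately inverting the continuous pieces and the jump maps $\psi_{\tau_i,e_i,u(\tau_i)}$ is globally well-defined and jointly continuous on $[t,T]\times\mathbb R^n$, which is precisely where the uniform linear growth and Lipschitz continuity of $\psi$ and the non-degeneracy $|\det(I+D_xg)|\ge\delta$ are essential; a secondary subtlety is that the variational equation needs $C^1$ rather than merely Lipschitz dependence on $x$, so one either strengthens the hypotheses or argues via mollified coefficients and a.e.-defined generalized derivatives.
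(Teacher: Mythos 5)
Your argument is correct, but it is not the route the paper takes: the paper gives no proof of this lemma at all, simply delegating it (together with Lemma \ref{lem_esti_sde}) to the general theory of stochastic flows for jump SDEs in \cite{kunita2004stochastic}. Your interlacing argument is a legitimate elementary alternative that is available precisely because $\nu(E)<\infty$: with finite activity the flow really is a finite a.s. composition of continuous-diffusion homeomorphisms and the jump maps $\phi_{\tau_i,e_i,u(\tau_i)}$, so the homeomorphism property, the variational equation, and the moment bound all reduce to the continuous theory plus Assumption \ref{ass:c4}. What your approach buys is transparency about where Assumption \ref{ass:c4} enters (it is exactly what makes each jump transformation invertible with a well-behaved inverse) and a self-contained derivation of the $L^p$ bound via $(1+r)^p\le C_p e^{r}$ and the exponential integrability of $\rho$ in Assumption \ref{ass:2.1}; what the citation buys is brevity and coverage of infinite-activity settings where interlacing is unavailable. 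Two minor points to tighten if you write this out: in the It\^o formula for $|\partial X|^p$ the compensator contributes $\int_E\bigl(|(I+D_xg)M|^p-|M|^p-p|M|^{p-2}\mathrm{Tr}(M^\top D_xg\,M)\bigr)\nu(de)$, i.e.\ there is a first-order correction term you omitted, though it is also bounded by $C\rho(e)|M|^p$ and so does not affect the Gronwall step; and your caveat about $C^1$ regularity is well taken --- the standing assumptions only state Lipschitz continuity while the lemma's statement (and Assumption \ref{ass:c4}) presuppose differentiability of the coefficients in $x$, a hypothesis the paper uses implicitly without ever declaring it.
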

We also have the following lemma concerning the solution to the BSDE \eqref{eq:b5}.
\begin{lem}\label{lem_bsde}
Let Assumptions \ref{ass:2.1}-\ref{ass:2.2} hold. The solution $(Y^{\tau,\xi,u},Z^{\tau,\xi,u},K^{\tau,\xi,u})$ of BSDE \eqref{eq:b5} satisfies
$$
|Y (\tau)|^2 \le C\mathbb E\left[ |h(X (T))|^2+\int_\tau^T|f(s,X (s),u(s),0,0,0)|^2ds  \bigg|\mathcal F_\tau\right].
$$
Hence, we have that
$$
|Y(\tau)|\le C(1+|\xi|^2).
$$
\end{lem}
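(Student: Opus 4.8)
The plan is to establish the first (integral) bound as the standard conditional \emph{a priori} estimate for BSDEs with jumps, obtained from an It\^o computation for $|Y|^2$ with an exponential weight, and then to read off the pointwise bound using the growth hypotheses together with Lemma~\ref{lem_esti_sde}. First I would record a pointwise estimate for the driver along the solution. Writing $\widetilde f(s):=f\big(s,X(s),u(s),Y(s),Z(s),\int_E K(s,e)l(s,e)\,\nu(de)\big)$ and $f_0(s):=f(s,X(s),u(s),0,0,0)$, the Cauchy--Schwarz inequality shows that $r\mapsto\int_E r(e)l(s,e)\,\nu(de)$ is a bounded linear functional on $M^{\nu,2}(E;\mathbb R)$ uniformly in $s$, since $0\le l(s,e)\le C(1+|e|)$ and $\nu$ is finite, so $\sup_s\|l(s,\cdot)\|_{M^{\nu,2}(E;\mathbb R)}<\infty$ (this integrability is part of the standing set-up that makes \eqref{eq:b5} well posed). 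Combined with the Lipschitz estimate of Assumption~\ref{ass:2.2}(ii), this yields $|\widetilde f(s)|\le |f_0(s)|+C\big(|Y(s)|+|Z(s)|+\|K(s,\cdot)\|_{M^{\nu,2}(E;\mathbb R)}\big)$.

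Next I would apply It\^o's formula to $s\mapsto \e^{\beta s}|Y(s)|^2$ on $\llbracket\tau,T\rrbracket$ using the dynamics of $Y$ from \eqref{eq:b5}. The quadratic-variation terms contribute $\int_\tau^T \e^{\beta s}|Z(s)|^2\,ds+\int_\tau^T\!\int_E \e^{\beta s}|K(s,e)|^2\,\mu(de,ds)$ to the left-hand side, while the $dW$- and $\tilde\mu$-integrals combine into a local martingale; localizing by stopping times and using $(Y,Z,K)\in S^2_{\mathscr F}\times M^{2,1}_{\mathscr F}\times M^{\nu,2}_{\mathscr F}$, this local martingale is killed by $\mathbb E[\,\cdot\,|\mathscr F_\tau]$ and $\mu(de,ds)$ may be replaced by $\nu(de)\,ds$ in expectation. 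One is left with an identity equating $\e^{\beta\tau}|Y(\tau)|^2+\mathbb E\big[\int_\tau^T\e^{\beta s}(\beta|Y(s)|^2+|Z(s)|^2+\|K(s,\cdot)\|^2_{M^{\nu,2}(E;\mathbb R)})\,ds\,\big|\,\mathscr F_\tau\big]$ to $\mathbb E\big[\e^{\beta T}|h(X(T))|^2+2\int_\tau^T\e^{\beta s}Y(s)\widetilde f(s)\,ds\,\big|\,\mathscr F_\tau\big]$. Substituting the bound on $\widetilde f$ and applying Young's inequality to $2|Y(s)||\widetilde f(s)|$, I would take $\beta$ large (depending only on the Lipschitz constant $C$) so that the $|Y|^2$-, $|Z|^2$- and $\|K\|^2$-contributions on the right-hand side are absorbed by the corresponding nonnegative terms on the left; discarding the remaining nonnegative terms gives $\e^{\beta\tau}|Y(\tau)|^2\le \e^{\beta T}\,\mathbb E\big[|h(X(T))|^2+\int_\tau^T|f_0(s)|^2\,ds\,\big|\,\mathscr F_\tau\big]$, whence (since $\e^{\beta\tau}\ge1$) the asserted inequality with $C=\e^{\beta T}$.

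For the pointwise bound, Assumption~\ref{ass:2.2}(ii) gives $|h(X(T))|^2\le C(1+|X(T)|^4)$, and, since the control process is uniformly bounded ($U$ being compact), also $|f_0(s)|^2\le C(1+|X(s)|^4)$ uniformly in $(s,\omega)$; hence $\mathbb E\big[|h(X(T))|^2+\int_\tau^T|f_0(s)|^2\,ds\,\big|\,\mathscr F_\tau\big]\le C\,\mathbb E\big[1+\sup_{\tau\le s\le T}|X(s)|^4\,\big|\,\mathscr F_\tau\big]\le C(1+|\xi|^4)$ by Lemma~\ref{lem_esti_sde} with $p=4$, and so $|Y(\tau)|\le C(1+|\xi|^2)$. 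I expect the only genuinely delicate point to be the justification that the $dW$- and $\tilde\mu$-stochastic integrals are true (not merely local) martingales, so that the conditional expectation annihilates them — this is the standard stopping-time localization combined with the stated integrability of $(Y,Z,K)$ — together with the routine care needed for the jump terms in It\^o's formula; the rest is bookkeeping.
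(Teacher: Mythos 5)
Your argument is correct and is exactly the standard a priori estimate that the paper invokes by citation (to Barles--Buckdahn--Pardoux and Tang--Li) without writing it out: It\^o's formula for $\e^{\beta s}|Y(s)|^2$, the Lipschitz bound on the driver together with the uniform boundedness of $r\mapsto\int_E r(e)l(s,e)\,\nu(de)$, Young's inequality with $\beta$ large, and then the growth conditions plus Lemma~\ref{lem_esti_sde} with $p=4$ for the pointwise bound. No discrepancy with the paper's (outsourced) proof.
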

\begin{proof}
The first estimate is a standard result for the a priori estimates of BSDE with jumps, see \cite{barles1997backward} and \cite{tang1994necessary}. The second one can be obtained by using Lemma \ref{lem_esti_sde}.
\end{proof}
With the help of above lemmas, we immediately have the following result.
\begin{lem}\label{lem:3.6}
	Let Assumptions \ref{ass:2.1}-\ref{ass:2.2} be satisfied. Then,
	for any given $\tau \in T_0$, $\xi, \bar \xi \in L^2(\Omega,{\mathscr F_\tau}, \mathbb P; \mathbb R^n)$,
	and $u(\cdot), \bar u(\cdot)\in \mathscr V[\tau, T]$, we have
	\begin{eqnarray} \label{eq:3.15}
	\mathbb J(\tau, \xi; u)\leq C(1+|\xi|^2),
	\end{eqnarray}
	and
	\begin{eqnarray}\label{eq:J_convergence}
	\begin{split}
	|\mathbb J(\tau,\xi; u)-\mathbb J(\tau,\bar\xi; \bar u)| \le C
	\bigg((1+|\xi|+|\bar \xi|)|\xi-\bar\xi|+\mathbb E\bigg[\int_\tau^T(1+|u(s)|+|\bar u(s)|)|u(s)-\bar u(s)|ds\bigg|
	\mathscr F_\tau\bigg]\bigg).
	\end{split}
	\end{eqnarray}
Therefore,
	\begin{eqnarray} \label{eq:3.15}
	|\mathbb V(\tau, \xi)|\leq C(1+|\xi|^2),
	\end{eqnarray}
	and
	\begin{eqnarray}\label{eq:3.16}
	|\mathbb V(\tau,\xi)-\mathbb V(\tau,\bar \xi)|\leq C(1+|\xi|+|\bar \xi|)|\xi-\bar \xi| .
	\end{eqnarray}
\end{lem}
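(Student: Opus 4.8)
The plan is to derive all four estimates from the a priori bounds for the BSDE \eqref{eq:b5} in Lemma \ref{lem_bsde}, combined with the stability estimates for the SDE \eqref{eq:3.1} in Lemma \ref{lem_esti_sde}, and then pass from statements about $\mathbb J$ to statements about $\mathbb V$ using the essential infimum together with Lemma \ref{lem:2.1}. The growth estimate \eqref{eq:3.15} for $\mathbb J$ is simply a restatement of the second conclusion of Lemma \ref{lem_bsde}: since $\mathbb J(\tau,\xi;u)=Y^{\tau,\xi;u}(\tau)$, we have $|\mathbb J(\tau,\xi;u)|\le C(1+|\xi|^2)$ directly. Passing to $\mathbb V$ then follows from Remark \ref{rmk_essinf}: the family $\{\mathbb J(\tau,\xi;u)\mid u\in\mathscr V[\tau,T]\}$ is uniformly bounded below by $-C(1+|\xi|^2)$ and above by $C(1+|\xi|^2)$, so its essential infimum inherits the same bound.

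The heart of the proof is the stability estimate \eqref{eq:J_convergence}. First I would write $(Y,Z,K)$ and $(\bar Y,\bar Z,\bar K)$ for the solutions of \eqref{eq:b5} along $(\tau,\xi;u)$ and $(\tau,\bar\xi;\bar u)$ respectively, with state processes $X=X^{\tau,\xi;u}$ and $\bar X=X^{\tau,\bar\xi;\bar u}$. Applying the standard a priori estimate for the difference of two BSDEs with jumps (the Lipschitz dependence of the solution on the terminal datum and the driver, as in \cite{barles1997backward,tang1994necessary}), and using Condition (iii) of Assumption \ref{ass:2.2} so that $\int_E K(s,e)l(s,e)\nu(de)$ is controlled by $\|K(s,\cdot)\|$ via $0\le l(t,e)\le C(1+|e|)$ and $\nu(E)<\infty$, one gets
\begin{eqnarray*}
|Y(\tau)-\bar Y(\tau)|^2 \le C\,\mathbb E\bigg[|h(X(T))-h(\bar X(T))|^2 + \int_\tau^T |f(s,X(s),u(s),\bar Y(s),\bar Z(s),\bar K_l(s)) - f(s,\bar X(s),\bar u(s),\bar Y(s),\bar Z(s),\bar K_l(s))|^2\,ds\;\bigg|\;\mathscr F_\tau\bigg],
\end{eqnarray*}
where $\bar K_l(s)=\int_E \bar K(s,e)l(s,e)\nu(de)$. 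Then I would invoke the local Lipschitz bounds in Condition (ii) of Assumption \ref{ass:2.2} to dominate the terminal term by $C\mathbb E[(1+|X(T)|+|\bar X(T)|)^2|X(T)-\bar X(T)|^2|\mathscr F_\tau]$ and the driver term by a similar expression involving $(1+|X(s)|+|\bar X(s)|+|u(s)|+|\bar u(s)|)^2$ and $(|X(s)-\bar X(s)|+|u(s)-\bar u(s)|)^2$. Applying the conditional Cauchy–Schwarz inequality to split these products, then the moment bound \eqref{eq:3.15} (first inequality of Lemma \ref{lem_esti_sde}) with a suitably large $p$ for the growth factors and the path-difference estimate \eqref{eq:3.15} (second inequality of Lemma \ref{lem_esti_sde}) for $\sup_s|X(s)-\bar X(s)|$, one arrives at \eqref{eq:J_convergence} after taking square roots; the factor $(1+|\xi|+|\bar\xi|)$ in front of $|\xi-\bar\xi|$ and the $(1+|u(s)|+|\bar u(s)|)$ inside the time integral emerge precisely from these growth-times-difference pairings (and from the compactness of $U$ in Assumption \ref{ass:3.1}, which keeps the control terms bounded).

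Finally, the estimate \eqref{eq:3.16} for $\mathbb V$ follows from \eqref{eq:J_convergence} with $u=\bar u$: for a fixed admissible $u$, $|\mathbb J(\tau,\xi;u)-\mathbb J(\tau,\bar\xi;u)|\le C(1+|\xi|+|\bar\xi|)|\xi-\bar\xi|$, and since this bound is uniform in $u$, taking essential infimum over $u$ on both sides and using that $|\essinf_u a_u - \essinf_u b_u|\le \esssup_u|a_u-b_u|$ (a consequence of Lemma \ref{lem:2.1}) yields \eqref{eq:3.16}. The main obstacle I anticipate is the careful bookkeeping in the stability estimate for $\mathbb J$: one must choose the exponents in the Hölder/Cauchy–Schwarz splittings so that the quadratic growth in $x$ in Assumption \ref{ass:2.2}(ii) is absorbed by the $L^p$-moment bound for the SDE (which holds for every $p\ge2$ by Lemma \ref{lem_esti_sde}), and one must track that the conditional expectations of the cross terms indeed reduce to the stated form rather than to something with an uncontrolled $\mathbb E[\sup|X(s)-\bar X(s)|^2|\mathscr F_\tau]$ that would then need \eqref{eq:3.15} again — but this is routine once the splitting is set up correctly.
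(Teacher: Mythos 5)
Your proposal is correct and follows exactly the route the paper intends: the paper gives no explicit proof of Lemma \ref{lem:3.6}, stating only that it follows ``immediately'' from Lemma \ref{lem_esti_sde} and Lemma \ref{lem_bsde}, and your argument is precisely the fleshed-out version of that --- the growth bound from Lemma \ref{lem_bsde}, the stability estimate from the standard BSDE a priori bounds combined with the SDE moment and path-difference estimates, and the passage to $\mathbb V$ via the essential-infimum properties of Lemma \ref{lem:2.1} and Remark \ref{rmk_essinf}. No gaps.
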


\subsection{Dynamic Programming Principle}

In this subsection, we prove that the
conditional minimal value system
$\{\mathbb V(t,\xi)| t\in \cal T\}$
satisfies dynamic programming principle. For that purpose, we first introduce the concept of the so-called backward semigroup, which is first introduced by Peng \cite{Peng1997}.  Given the initial data $(\tau,\xi)$ with $\tau \in T$ and $\xi \in L^2(\Omega,{\mathscr F_\tau}, \mathbb P;\mathbb R^n)$, a stopping time $\gamma \in T_{\tau}$, an admissible control process $u(\cdot)\in \mathscr  V[\tau,\gamma]$, and a real-valued random variable $\eta \in L^2(\Omega,{\mathscr F_\gamma}, \mathbb P;\mathbb R)$, we define
$$
G^{\tau,\xi;u(\cdot)}_{s,\gamma}[\eta]:=\tilde Y (s), \quad s\in \llbracket \tau,\gamma\rrbracket,
$$
where $(\tilde X,\tilde Y,\tilde Z,\tilde K)$ is the solution of the following forward-backward system:
\begin{equation*}
\left\{
\begin{split}
dX(s) =& \ b(s,X(s),u(s))ds+\sigma(s,X(s),u(s))d W(s)+ \int_Eg(s, e, X({s-}), u(s))\tilde\mu(d e, ds), \\
dY(s)=&-f(s,X (s),u(s),Y(s),Z(s),\int_E K(s,e)l(s,e)\nu(de))ds+Z(s)dW(s)+\int_E K(s,e)\tilde\mu (ds,de),\quad \tau\leq s\leq \gamma,\\
X(\tau) =& \ \xi , \quad Y(\gamma)=\eta .
\end{split}
\right.
\end{equation*}
The main result is given in the following theorem.

\begin{thm}\label{thm_dpp_1}
Under Assumptions \ref{ass:2.1}-\ref{ass:2.2}, the
conditional minimal value system $\mathbb V(\tau,\xi)$ obeys the following
dynamic programming principle: for any $\tau \in T_0$, $\gamma \in T_{\tau}$, and $\xi \in L^2(\Omega,{\mathscr F_\tau}, \mathbb P,\mathbb R^n)$, it holds
\begin{eqnarray}\label{eq:3.4}
\mathbb V(\tau,\xi)= \essinf_{u(\cdot)\in\mathscr V[\tau, \gamma]}  G^{\tau,\xi;u(\cdot)}_{s,\gamma}[\mathbb V(\gamma,X^{\tau,\xi;u(\cdot)}(\gamma))].
\end{eqnarray}
\end{thm}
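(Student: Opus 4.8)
The plan is to establish the dynamic programming principle \eqref{eq:3.4} by the standard two-inequality argument, adapted to the recursive (BSDE-type) cost functional and the conditional (essential infimum) formulation. Throughout, write $W^{\tau,\xi;u}(\gamma):=\mathbb V(\gamma,X^{\tau,\xi;u(\cdot)}(\gamma))$ and denote the right-hand side of \eqref{eq:3.4} by $\widehat{\mathbb V}(\tau,\xi)$. The key structural facts I would rely on are: (a) the flow property of the state process from Lemma~\ref{lem_esti_sde}, which gives $X^{\tau,\xi;u}(s)=X^{\gamma,X^{\tau,\xi;u}(\gamma);u}(s)$ for $s\ge\gamma$; (b) the semigroup (concatenation) property of $G$, namely $G^{\tau,\xi;u}_{\tau,T}[\eta]=G^{\tau,\xi;u}_{\tau,\gamma}\big[G^{\gamma,X^{\tau,\xi;u}(\gamma);u}_{\gamma,T}[\eta]\big]$, which is just uniqueness of solutions to the BSDE \eqref{eq:b5} split at the stopping time $\gamma$; and (c) the comparison principle for BSDEs with jumps under Condition (iii) of Assumption~\ref{ass:2.2}, which transfers inequalities between terminal data of $G$ to inequalities between the values $G$ produces. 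Note $\mathbb J(\tau,\xi;u)=G^{\tau,\xi;u}_{\tau,T}[h(X^{\tau,\xi;u}(T))]$, so $\mathbb V(\tau,\xi)=\essinf_{u\in\mathscr V[\tau,T]}G^{\tau,\xi;u}_{\tau,T}[h(X(T))]$.

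For the inequality $\mathbb V(\tau,\xi)\ge\widehat{\mathbb V}(\tau,\xi)$: fix any $u(\cdot)\in\mathscr V[\tau,T]$ and decompose it as its restriction $u_1$ to $\llbracket\tau,\gamma\rrbracket$ and $u_2$ to $\llbracket\gamma,T\rrbracket$. By the semigroup property (b) and the flow property (a), $\mathbb J(\tau,\xi;u)=G^{\tau,\xi;u_1}_{\tau,\gamma}\big[\mathbb J(\gamma,X^{\tau,\xi;u}(\gamma);u_2)\big]$. Since $\mathbb J(\gamma,X^{\tau,\xi;u}(\gamma);u_2)\ge\mathbb V(\gamma,X^{\tau,\xi;u}(\gamma))$ a.s.\ by definition of the essential infimum, the comparison principle (c) yields $\mathbb J(\tau,\xi;u)\ge G^{\tau,\xi;u_1}_{\tau,\gamma}[W^{\tau,\xi;u}(\gamma)]\ge\widehat{\mathbb V}(\tau,\xi)$. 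Taking the essential infimum over $u$ gives the claim; here I would invoke the measurability/estimates of Lemma~\ref{lem:3.6} to ensure the quantities are integrable and the essential infimum is well-defined.

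For the reverse inequality $\mathbb V(\tau,\xi)\le\widehat{\mathbb V}(\tau,\xi)$: fix $u_1(\cdot)\in\mathscr V[\tau,\gamma]$; the goal is to show $\mathbb V(\tau,\xi)\le G^{\tau,\xi;u_1}_{\tau,\gamma}[\mathbb V(\gamma,X^{\tau,\xi;u_1}(\gamma))]$, after which taking the essential infimum over $u_1$ finishes the proof. The idea is to glue $u_1$ with a near-optimal control for the problem started at $(\gamma,X^{\tau,\xi;u_1}(\gamma))$. Since $\mathscr X:=\{\mathbb J(\gamma,X^{\tau,\xi;u_1}(\gamma);u_2):u_2\in\mathscr V[\gamma,T]\}$ is directed downward (one can paste two controls on disjoint $\mathscr F_\gamma$-measurable sets using the flow property and uniqueness of the state SDE), the second part of Lemma~\ref{lem:2.1} provides a nonincreasing sequence $Z_n\downarrow\mathbb V(\gamma,X^{\tau,\xi;u_1}(\gamma))$ with $Z_n=\mathbb J(\gamma,X^{\tau,\xi;u_1}(\gamma);u_2^n)$ for some $u_2^n\in\mathscr V[\gamma,T]$. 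Concatenating $u_1$ with $u_2^n$ gives an admissible $u^n\in\mathscr V[\tau,T]$, and by (a), (b) again, $\mathbb V(\tau,\xi)\le\mathbb J(\tau,\xi;u^n)=G^{\tau,\xi;u_1}_{\tau,\gamma}[Z_n]$. Finally I would pass to the limit $n\to\infty$ inside $G^{\tau,\xi;u_1}_{\tau,\gamma}[\cdot]$, using continuity of the solution map of the BSDE \eqref{eq:b5} with respect to its terminal condition (monotone/dominated convergence together with the a priori $L^2$-estimate for BSDEs with jumps and the uniform bound $|Z_n|\le C(1+|X^{\tau,\xi;u_1}(\gamma)|^2)$ from Lemma~\ref{lem_bsde}), to conclude $\mathbb V(\tau,\xi)\le G^{\tau,\xi;u_1}_{\tau,\gamma}[\mathbb V(\gamma,X^{\tau,\xi;u_1}(\gamma))]$.

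The main obstacle I anticipate is the reverse inequality, specifically the two measurability-and-pasting points: showing that concatenations of admissible controls at the (random) stopping time $\gamma$ remain admissible and that the corresponding state/cost processes glue correctly via the flow property (this needs care because $\gamma$ is a stopping time and $X^{\tau,\xi;u_1}(\gamma)$ is only $\mathscr F_\gamma$-measurable, not deterministic), and ensuring the family $\mathscr X$ above is genuinely closed under pairwise minimum so that Lemma~\ref{lem:2.1} delivers the approximating sequence. A secondary technical point is justifying the limit $G^{\tau,\xi;u_1}_{\tau,\gamma}[Z_n]\to G^{\tau,\xi;u_1}_{\tau,\gamma}[\mathbb V(\gamma,X^{\tau,\xi;u_1}(\gamma))]$ uniformly enough to survive taking essential infima; the monotonicity $Z_n\downarrow$ combined with the BSDE stability estimate should make this routine but it must be stated carefully since the jump term and the $k$-argument of $f$ are involved.
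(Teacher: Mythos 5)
Your proposal is correct, and the first inequality ($\mathbb V \ge \widehat{\mathbb V}$) is argued exactly as in the paper: split the BSDE at $\gamma$ via uniqueness of the forward--backward system, use $Y^{\gamma,X(\gamma);u}(\gamma)\ge \mathbb V(\gamma,X(\gamma))$, and apply comparison. The two proofs diverge only in the reverse inequality. The paper first upgrades the monotone minimizing sequence into a single a.s.\ $\varepsilon$-optimal control (Lemma \ref{lem_approximate_optimal}, built by patching the controls $u_k$ on the $\mathscr F_\gamma$-measurable sets $A_k\setminus A_{k-1}$), concatenates it with $u_1$, and then needs only one application of the BSDE stability estimate, in the form $G^{\tau,\xi;\tilde u}_{\tau,\gamma}[\mathbb V(\gamma,\cdot)+\varepsilon]\le G^{\tau,\xi;\tilde u}_{\tau,\gamma}[\mathbb V(\gamma,\cdot)]+C\varepsilon$, before letting $\varepsilon\to 0$. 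You instead keep the whole sequence $Z_n=\mathbb J(\gamma,X(\gamma);u_2^n)\downarrow \mathbb V(\gamma,X(\gamma))$ and pass to the limit inside $G^{\tau,\xi;u_1}_{\tau,\gamma}[\cdot]$, justified by the uniform quadratic bound from Lemma \ref{lem_bsde}, dominated convergence, and $L^2$-continuity of the BSDE solution in its terminal datum. Both routes rest on the same two ingredients (directedness of the cost family, which the paper's Lemma \ref{lem_convergence} already provides for the random initial datum $X^{\tau,\xi;u_1}(\gamma)\in L^2(\mathscr F_\gamma)$, and the a priori stability estimate for BSDEs with jumps); the paper's $\varepsilon$-optimal-control device trades your limit passage inside the backward semigroup for a one-shot perturbation bound, which is slightly cleaner since it avoids extracting an a.s.\ convergent subsequence from the $L^2$-convergence of $G[Z_n](\tau)$. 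The technical worries you flag (admissibility of concatenations at the stopping time $\gamma$, correctness of the gluing via the flow property) are real but are handled the same way in the paper, so there is no gap.
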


Before proving Theorem \ref{thm_dpp_1},  we present the following lemmas.

\begin{lem}\label{lem_convergence}
Let Assumptions \ref{ass:2.1}-\ref{ass:2.2} be satisfied.
Then for any initial data  $(\tau, \xi) \in T_0 \times L^2(\Omega,{\mathscr F_\tau}, \mathbb P;\mathbb R^n)$,
the set $\{ \mathbb J(\tau, \xi; u(\cdot)) | u(\cdot)\in \mathscr V[\tau, T]\}$ is closed under pairwise minimization.
Consequently, there exists a sequence of admissible controls $\{u_k(\cdot)\}_{k=1}^{\infty}$  such that
$\{\mathbb  J(\tau,\xi; u_k(\cdot))\}_{k=1}^{\infty}$ is non-increasing and
\begin{eqnarray}\label{eq:3.5}
\lim_{k \rightarrow \infty} \mathbb  J(\tau,\xi; u_k(\cdot))(\omega)=\mathbb V(\tau, \xi)(\omega), \quad a.e..
\end{eqnarray}
Moreover, for any sub-algebra $\mathscr G$ of $\mathscr F_t,$ the  $\mathscr G$-conditional expectation
is interchangeable with the essential infimum:
\begin{eqnarray} \label{eq:3.6}
\mathbb E[ \mathbb V (t, \xi)|\mathscr G]=
\essinf_{u(\cdot)\in\mathscr V[t, T]}  \mathbb E[ \mathbb  J(t,\xi; u(\cdot)) |\mathscr G], \quad a.e..
\end{eqnarray}
\end{lem}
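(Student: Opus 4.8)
The plan is to verify the hypotheses of Lemma \ref{lem:2.1} (in the form of Remark \ref{rmk_essinf}, since by Lemma \ref{lem:3.6} the family $\{\mathbb J(\tau,\xi;u)\}$ is bounded below by $-C(1+|\xi|^2)$ rather than by $0$) and then translate its conclusions. The one substantive thing to check is closedness under pairwise minimization; everything else is a direct application of the abstract lemma. So the first step is: given $u_1(\cdot),u_2(\cdot)\in\mathscr V[\tau,T]$, I would like to produce $u_3(\cdot)\in\mathscr V[\tau,T]$ with $\mathbb J(\tau,\xi;u_3)=\mathbb J(\tau,\xi;u_1)\wedge\mathbb J(\tau,\xi;u_2)$ a.s. The natural candidate is the ``splicing'' control
\[
u_3(s):=u_1(s)\mathbf 1_{A}+u_2(s)\mathbf 1_{A^c},\qquad
A:=\{\mathbb J(\tau,\xi;u_1)\le \mathbb J(\tau,\xi;u_2)\}\in\mathscr F_\tau.
\]
Since $A\in\mathscr F_\tau$ and both $u_i$ are $\{\mathscr F_s\}_{s\ge\tau}$-predictable, $u_3$ is again predictable, hence admissible. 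The key point is then that, because $A$ is $\mathscr F_\tau$-measurable, the state process satisfies $X^{\tau,\xi;u_3}(s)=X^{\tau,\xi;u_1}(s)\mathbf 1_A+X^{\tau,\xi;u_2}(s)\mathbf 1_{A^c}$ a.s.\ (both sides solve \eqref{eq:3.1} with the same coefficients and initial datum on $A$ resp.\ $A^c$, and strong uniqueness from Lemma \ref{lem_esti_sde} forces equality), and likewise for the BSDE \eqref{eq:b5}: multiplying the BSDE through by $\mathbf 1_A$ and using that $\mathbf 1_A$ is $\mathscr F_\tau$-measurable (so it passes through the stochastic integrals and, by the locality of the generator, through the $ds$-integral), one gets $Y^{\tau,\xi;u_3}=Y^{\tau,\xi;u_1}\mathbf 1_A+Y^{\tau,\xi;u_2}\mathbf 1_{A^c}$, and in particular $\mathbb J(\tau,\xi;u_3)=\mathbb J(\tau,\xi;u_1)\mathbf 1_A+\mathbb J(\tau,\xi;u_2)\mathbf 1_{A^c}=\mathbb J(\tau,\xi;u_1)\wedge\mathbb J(\tau,\xi;u_2)$. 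This is the step I expect to need the most care: one has to justify the ``$\mathbf 1_A$ commutes with the equation'' assertion cleanly — either by the localization/uniqueness argument just sketched, or by verifying directly that the pasted process solves the FBSDE and invoking uniqueness from Lemmas \ref{lem_esti_sde} and the well-posedness of \eqref{eq:b5}.

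With closedness under pairwise minimization established, the existence of a nonincreasing minimizing sequence $\{u_k(\cdot)\}$ with $\mathbb J(\tau,\xi;u_k)\downarrow\mathbb V(\tau,\xi)$ a.s.\ is exactly the second assertion of Lemma \ref{lem:2.1} (applied to $\{\mathbb J(\tau,\xi;u)+C(1+|\xi|^2)\}$), giving \eqref{eq:3.5}. For the interchange \eqref{eq:3.6}, the inequality $\mathbb E[\mathbb V(t,\xi)\mid\mathscr G]\le\mathbb E[\mathbb J(t,\xi;u)\mid\mathscr G]$ for every $u$ is immediate from $\mathbb V(t,\xi)\le\mathbb J(t,\xi;u)$ and monotonicity of conditional expectation, whence $\mathbb E[\mathbb V(t,\xi)\mid\mathscr G]\le\essinf_u\mathbb E[\mathbb J(t,\xi;u)\mid\mathscr G]$. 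For the reverse inequality, apply monotone convergence for conditional expectations along the nonincreasing sequence $\{u_k\}$ (the family is dominated in $L^1$ by, say, $|\mathbb J(t,\xi;u_1)|+C(1+|\xi|^2)$, which is integrable by Lemma \ref{lem:3.6}): $\mathbb E[\mathbb V(t,\xi)\mid\mathscr G]=\lim_k\mathbb E[\mathbb J(t,\xi;u_k)\mid\mathscr G]\ge\essinf_u\mathbb E[\mathbb J(t,\xi;u)\mid\mathscr G]$. Combining the two inequalities gives \eqref{eq:3.6}. (Alternatively one can quote the last assertion of Lemma \ref{lem:2.1} verbatim once closedness under pairwise minimization is in hand.)

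A minor point to address along the way is integrability: Lemma \ref{lem:2.1} is stated for families of integrable random variables, and here $\mathbb J(\tau,\xi;u)$ is $\mathscr F_\tau$-measurable with $|\mathbb J(\tau,\xi;u)|\le C(1+|\xi|^2)$ by Lemma \ref{lem:3.6}, which is in $L^1$ since $\xi\in L^2(\Omega,\mathscr F_\tau,\mathbb P;\mathbb R^n)$; so after the shift by $C(1+|\xi|^2)$ the family is a family of nonnegative integrable random variables and Remark \ref{rmk_essinf} applies without friction. I would also remark that the same splicing argument, now with an $\mathscr F_\tau$-measurable set $A$ and controls in $\mathscr V[\tau,\gamma]$, shows that the family appearing inside the essential infimum of the dynamic programming principle \eqref{eq:3.4} is itself closed under pairwise minimization, which is why this lemma is placed just before the proof of Theorem \ref{thm_dpp_1}.
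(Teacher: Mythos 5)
Your proposal is correct and follows essentially the same route as the paper: the splicing control $u_1\mathbf 1_A+u_2\mathbf 1_{A^c}$ with $A=\{\mathbb J(\tau,\xi;u_1)\le\mathbb J(\tau,\xi;u_2)\}\in\mathscr F_\tau$, uniqueness of the forward--backward system to identify $\mathbb J(\tau,\xi;u_3)$ with the pointwise minimum, and then Remark \ref{rmk_essinf} together with Lemma \ref{lem:2.1} for \eqref{eq:3.5} and \eqref{eq:3.6}. Your added details (the explicit justification that $\mathbf 1_A$ passes through the SDE and BSDE, and the conditional monotone convergence argument for \eqref{eq:3.6}) merely flesh out steps the paper leaves to the reader.
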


\begin{proof}
 Given $u_1(\cdot), u_2(\cdot)\in \mathscr V[\tau, T]$, letting $A\triangleq\{\omega |\mathbb J(\tau, \xi; u_1(\cdot)) \leq \mathbb J(t, \xi; u_2(\cdot))\}$, we have $A\in \mathscr F_t$.
Define $v(\cdot)\triangleq u_1(\cdot)\chi_A+u_2(\cdot)\chi_{A^C}$, which is an admissible control in $\mathscr V[t, T]$.
From the uniqueness for the solution of BSDE \eqref{eq:b5}, it is easy to check that
\begin{eqnarray}
     \begin{split}
      \mathbb  J(\tau, \xi; v(\cdot))&= \mathbb J(\tau, \xi; u_1(\cdot)\chi_A+u_2(\cdot)\chi_{A^C})
       \\&=
       \mathbb J(\tau, \xi; u_1(\cdot))\chi_A+\mathbb J(\tau, \xi; u_2(\cdot))\chi_{A^C}
       \\&= \mathbb J(\tau, \xi; u_1(\cdot))\wedge \mathbb J(\tau, \xi; u_2(\cdot)).
     \end{split}
\end{eqnarray}
 Thus, the set $\{ \mathbb J(\tau, \xi; u(\cdot)) | u(\cdot)\in \mathscr V[\tau, T]\}$ is closed under pairwise minimization. Moreover, one can also get that
 $$
 \mathbb J(\tau,\xi;u(\cdot)) \ge -C(1+|\xi|^2) .
 $$
Then, according to  Remark \ref{rmk_essinf}, \eqref{eq:3.5} and \eqref{eq:3.6} follow directly from
Lemma \ref{lem:2.1}. The proof is completed.
\end{proof}

Next, we prove that one can choose an admissible control that is of at most $\varepsilon$ difference to the optimal value.
\begin{lem}\label{lem_approximate_optimal}
For any initial data  $(\tau,\xi) \in T_0\times L^2(\Omega,\mathcal F_{\tau},\mathbb P;\mathbb R^n)$ and  $\varepsilon>0$, there exists an admissible control $u^{\varepsilon} \in \mathcal V[\tau, T]$ such that
$$
\mathbb J(\tau,\xi;u^{\varepsilon}(\cdot)) \le \mathbb V(\tau,\xi)+\varepsilon,\text{ a.s..}
$$
\end{lem}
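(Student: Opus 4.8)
The plan is to paste the minimizing sequence supplied by Lemma~\ref{lem_convergence} along an increasing family of $\mathscr F_\tau$-measurable events, so as to build a single admissible control that is within $\varepsilon$ of the optimal value a.s.

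First I would invoke Lemma~\ref{lem_convergence} to obtain $\{u_k(\cdot)\}_{k\ge1}\subset\mathscr V[\tau,T]$ with $\{\mathbb J(\tau,\xi;u_k(\cdot))\}_k$ non-increasing and converging a.s. to $\mathbb V(\tau,\xi)$. Put
$$
A_k:=\{\omega\in\Omega:\mathbb J(\tau,\xi;u_k(\cdot))(\omega)\le\mathbb V(\tau,\xi)(\omega)+\varepsilon\}\in\mathscr F_\tau .
$$
Monotonicity of $k\mapsto\mathbb J(\tau,\xi;u_k)$ gives $A_k\subseteq A_{k+1}$, and the a.s. convergence gives $\bigcup_k A_k=\Omega$ up to a $\mathbb P$-null set. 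Set $B_1:=A_1$ and $B_k:=A_k\setminus A_{k-1}$ for $k\ge2$, so that the $B_k$ are disjoint, $\mathscr F_\tau$-measurable, $A_N=\bigsqcup_{k=1}^N B_k$, and $\bigsqcup_{k\ge1}B_k=\Omega$ a.s.

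Next I would define $u^\varepsilon(\cdot):=\sum_{k\ge1}u_k(\cdot)\chi_{B_k}$; since the $B_k$ are disjoint and lie in $\mathscr F_\tau\subseteq\mathscr F_s$ for $s\ge\tau$, this is an $\{\mathscr F_s\}$-predictable process valued in $U$, hence $u^\varepsilon\in\mathscr V[\tau,T]$. The crux is then the pasting identity
$$
\mathbb J(\tau,\xi;u^\varepsilon(\cdot))=\sum_{k\ge1}\mathbb J(\tau,\xi;u_k(\cdot))\,\chi_{B_k}\qquad\text{a.s.}
$$
To establish it, set $\tilde X:=\sum_k X^{\tau,\xi;u_k}\chi_{B_k}$; since $B_k\in\mathscr F_\tau$ the indicators may be pulled through the Lebesgue, Brownian and compensated Poisson integrals over $[\tau,s]$, and on $B_k$ the coefficients $b,\sigma,g$ evaluated along $(\tilde X,u^\varepsilon)$ coincide with those along $(X^{\tau,\xi;u_k},u_k)$, so $\tilde X$ solves \eqref{eq:3.1} with control $u^\varepsilon$; uniqueness (Lemma~\ref{lem_esti_sde}) then forces $X^{\tau,\xi;u^\varepsilon}=\tilde X$, i.e.\ $X^{\tau,\xi;u^\varepsilon}=X^{\tau,\xi;u_k}$ on $B_k$. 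The same localization applied to the BSDE \eqref{eq:b5} (using uniqueness of its solution, exactly as in Lemma~\ref{lem_convergence}) gives $Y^{\tau,\xi;u^\varepsilon}=Y^{\tau,\xi;u_k}$ on $B_k$, and evaluating at $s=\tau$ yields the identity. On each $B_k$ we then have $\mathbb J(\tau,\xi;u^\varepsilon)=\mathbb J(\tau,\xi;u_k)\le\mathbb V(\tau,\xi)+\varepsilon$, and since $\bigsqcup_k B_k=\Omega$ a.s.\ this gives $\mathbb J(\tau,\xi;u^\varepsilon)\le\mathbb V(\tau,\xi)+\varepsilon$ a.s., as desired.

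The step I expect to be the main obstacle is the rigorous justification of this countable pasting identity---namely controlling the convergence of the infinite series of stochastic integrals and justifying the interchange of the $\mathscr F_\tau$-indicators with the Brownian and Poisson integrals. A cleaner route avoiding infinite sums is to work with the finite truncations $u^{\varepsilon,N}:=\sum_{k=1}^{N}u_k\chi_{B_k}+u_N\chi_{\Omega\setminus A_N}$, which require only the two-control pasting of Lemma~\ref{lem_convergence} iterated finitely many times. Because $U$ is compact (Assumption~\ref{ass:3.1}) and $\chi_{\Omega\setminus A_N}\downarrow0$ a.s., one has $\mathbb E\big[\int_\tau^T|u^{\varepsilon,N}-u^\varepsilon|^2\,ds\,\big|\,\mathscr F_\tau\big]\le C\,\chi_{\Omega\setminus A_N}\to0$ a.s., so the continuity estimate \eqref{eq:J_convergence} gives $\mathbb J(\tau,\xi;u^{\varepsilon,N})\to\mathbb J(\tau,\xi;u^\varepsilon)$ a.s.; on the other hand, on each $B_k$ and for $N\ge k$ one has $\mathbb J(\tau,\xi;u^{\varepsilon,N})=\mathbb J(\tau,\xi;u_k)$, which is constant in $N$ and bounded above by $\mathbb V(\tau,\xi)+\varepsilon$, so passing to the limit yields the required bound.
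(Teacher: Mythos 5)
Your proposal is correct and follows essentially the same route as the paper: the same sets $A_k$, the same disjoint decomposition $B_k=A_k\setminus A_{k-1}$, the same pasted control $u^{\varepsilon}=\sum_k u_k\chi_{B_k}$, and the same appeal to uniqueness of the SDE/BSDE solutions to identify $\mathbb J(\tau,\xi;u^{\varepsilon})$ with $\mathbb J(\tau,\xi;u_k)$ on each $B_k$. Your additional finite-truncation argument via the continuity estimate \eqref{eq:J_convergence} is a reasonable way to make the countable pasting fully rigorous, a point the paper's own proof passes over more quickly.
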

\begin{proof}
From Lemma \ref{lem_convergence}, there exists a sequence of admissible controls $\{u_k(\cdot)\}_{k=1}^{\infty}$ such that $\{\mathbb J(\tau,\xi;u_k(\cdot))\}_{k=1}^{\infty}$ is non-increasing and convergences to $\mathbb V(\tau,\xi)$ almost surely. Define the following sets as
$$
A_k:=\{ \mathbb J(\tau,\xi;u_k(\cdot))\le \mathbb V(\tau,\xi)+\varepsilon\}.
$$
Then, we have that $A_k \subset A_{k+1}$ and $\cup_{k=1}^{\infty} A_k=\Omega$. We construct the admissible control $u^{\varepsilon}$ as
$$
u^{\varepsilon}(\cdot)=\sum_{k=1}^{\infty} u_k(\cdot) 1_{ A_k \setminus A_{k-1}},
$$
with $A_0=\emptyset$. From the uniqueness for the solution of BSDE \eqref{eq:b5}, we see that
$$
\mathbb J(\tau,\xi;u^{\varepsilon}(\cdot))1_{ A_k \setminus A_{k-1}}=\mathbb J(\tau,\xi;u_k(\cdot))1_{ A_k \setminus A_{k-1}}\le \left( \mathbb V(\tau,\xi)+\varepsilon\right)1_{ A_k \setminus A_{k-1}},
$$
for any $k = 1,2,\cdots$, which leads to the desired result.
\end{proof}

\begin{proof}[Proof of Theorem \ref{thm_dpp_1}]
First, from the uniqueness for the solution of the forward-backward system, we have that for any initial data $(\tau,\xi)$, admissible control $u(\cdot)\in \mathscr V[\tau,T]$, and $\gamma \in T_{\tau}$, the following
relation holds
$$
G^{\tau,\xi;u}_{s,T}[h(X^{\tau,\xi;u}(T))]=G^{\tau,\xi;u}_{s,\gamma}[Y^{\gamma,X^{\tau,\xi;u}(\gamma);u}(\gamma)],
\quad \tau \le s \le \gamma.
$$
Hence,
$$
\mathbb V(\tau,\xi)=\essinf G^{\tau,\xi;u}_{s,T}[h(X^{\tau,\xi;u}(T))]=\essinf G^{\tau,\xi;u}_{s,\gamma}[Y^{\gamma,X^{\tau,\xi;u}(\gamma);u}(\gamma)]\ge \essinf G^{\tau,\xi;u}_{s,\gamma}[\mathbb V(\gamma,X^{\tau,\xi;u}(\gamma))].
$$
From Lemma \ref{lem_approximate_optimal}, it holds that, for any $\varepsilon>0$ and $u(\cdot) \in \mathscr V[\tau,T]$, there exists an admissible control $\bar u(\cdot) \in \mathcal[\gamma, T]$ such that
$$
\mathbb V(\gamma,X^{\tau,\xi;u}(\gamma))\ge Y^{\gamma,X^{\tau,\xi;u}(\gamma);\bar u(\cdot)}(\gamma)-\varepsilon,\text{ a.s.}.
$$
Combining the two controls $u (\cdot)$ and ${\bar u} (\cdot)$ as
$$
\tilde u(s):=\left\{
\begin{split}
u(s), \quad \tau \le s\le \gamma,\\
\bar u(s), \quad \gamma \le s \le T,
\end{split}
\right.
$$
we have
\begin{equation*}
\begin{split}
\mathbb V(\tau,\xi) \le& \ G^{\tau,\xi;\tilde u}_{\tau,T}[h(X^{\tau,\xi;\tilde u}(T))]= G^{\tau,\xi;\tilde u}_{\tau,\gamma}[Y^{\gamma,X^{\tau,\xi;\tilde u}(\gamma);\tilde u}(\gamma)]\\
\le& \ G^{\tau,\xi;\tilde u}_{\tau,\gamma}[\mathbb V(\gamma,X^{\tau,\xi;\tilde u}(\gamma))+\varepsilon]\le  G^{\tau,\xi;\tilde u}_{\tau,\gamma}[\mathbb V(\gamma,X^{\tau,\xi;\tilde u}(\gamma))]+C\varepsilon,
\end{split}
\end{equation*}
where the last inequality is due to the estimate for BSDE with jumps. From the arbitrariness of $\varepsilon$, we get the desired result.
\end{proof}

We see that $\mathbb V(\tau,\cdot)$ is a mapping from $L^2(\Omega,{\mathscr F_
\tau}, \mathbb P; \mathbb R)$ to itself. One can get a random function from this mapping by restricting $\mathbb V$ to the deterministic random variables as initial state values, i.e.,
\begin{eqnarray}\label{eq:3.13}
   V(t,x) \triangleq \mathbb V(t,x), \quad (t,x)\in [0, T]\times \mathbb R^n.
\end{eqnarray}
This random function is called the value function for the optimal control problem.  We shall see that $V (t,x)$ also satisfies dynamic programming principle.  Similarly, for any control $u(\cdot)$, we define
$$
J(t,x,u(\cdot)) \triangleq \mathbb J(t,x,\cdot), \quad (t,x)\in [0, T]\times \mathbb R^n.
$$
The following theorem is an analogous result of Theorem \ref{thm_dpp_1}. 
\begin{thm} \label{thm:3.5}
   Let Assumptions  \ref{ass:2.1}-\ref{ass:2.2}
   be satisfied. Then the value function
   $V(t, x)$ obeys  the following
  dynamic programming principle: for any $0\leq t\leq t+\delta\leq T$,
\begin{eqnarray}\label{eq:3.14}
       V(t,\xi)= \essinf_{u(\cdot)\in\mathscr V[t, T]}  G^{t,\xi;u}_{t,t+\delta}[ V(t+\delta,X^{t,\xi;u}(t+\delta))].
   \end{eqnarray}
\end{thm}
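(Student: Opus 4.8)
The plan is to obtain Theorem~\ref{thm:3.5} as a specialization of Theorem~\ref{thm_dpp_1}: take the stopping times $\tau,\gamma$ in the dynamic programming principle of Theorem~\ref{thm_dpp_1} to be the deterministic instants $t$ and $t+\delta$, and the initial datum to be $\xi\in L^2(\Omega,\mathscr F_t,\mathbb P;\mathbb R^n)$ (in particular a constant $x\in\mathbb R^n$ is allowed). The only point that genuinely needs an argument is that the random field evaluated at a random state, $V(s,\zeta)$ with $\zeta\in L^2(\Omega,\mathscr F_s,\mathbb P;\mathbb R^n)$, coincides almost surely with the conditional value $\mathbb V(s,\zeta)$ of Problem $(D_{s,\zeta})$; granting this, the terminal data $\mathbb V(t+\delta,X^{t,\xi;u}(t+\delta))$ appearing in Theorem~\ref{thm_dpp_1} and the terminal data $V(t+\delta,X^{t,\xi;u}(t+\delta))$ appearing in \eqref{eq:3.14} are the same object, and the equality of the two left-hand sides is definition \eqref{eq:3.13}. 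Everything else is bookkeeping.

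To make sense of, and then identify, these objects I would first fix a deterministic time $s\in[0,T]$ and use the Lipschitz-in-state estimate \eqref{eq:3.16} of Lemma~\ref{lem:3.6}: applied to rational $x,x'$ and after discarding a single $\mathbb P$-null set, it shows that $x\mapsto V(s,x)(\omega)$ is locally Lipschitz on $\mathbb Q^n$ for a.e.\ $\omega$, hence extends uniquely to a locally Lipschitz function on $\mathbb R^n$. This produces a jointly measurable modification of $V(s,\cdot)$ (measurable in $\omega$, continuous in $x$), still denoted $V(s,\cdot)$, for which $\omega\mapsto V(s,\zeta(\omega))(\omega)$ is a well-defined $\mathscr F_s$-measurable random variable whenever $\zeta\in L^2(\Omega,\mathscr F_s,\mathbb P;\mathbb R^n)$.

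The heart of the matter is then the consistency identity $\mathbb V(s,\zeta)=V(s,\zeta)$ a.s. For a simple $\zeta=\sum_{i=1}^N x_i\,1_{A_i}$ with $\{A_i\}\subset\mathscr F_s$ a finite partition of $\Omega$, I would argue as follows. Since each $1_{A_i}$ is $\mathscr F_s$-measurable, localizing the a priori estimate of Lemma~\ref{lem_esti_sde} by $1_{A_i}$ and using $1_{A_i}\zeta=1_{A_i}x_i$ gives $1_{A_i}X^{s,\zeta;u}(\cdot)=1_{A_i}X^{s,x_i;u}(\cdot)$ a.s.; feeding this into the BSDE \eqref{eq:b5} and invoking uniqueness (exactly the manipulation used in the proof of Lemma~\ref{lem_convergence}) yields $\mathbb J(s,\zeta;u)=\sum_i\mathbb J(s,x_i;u)\,1_{A_i}$ for every $u\in\mathscr V[s,T]$. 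Taking the essential infimum over $u$ gives $\mathbb V(s,\zeta)\ge\sum_i\mathbb V(s,x_i)\,1_{A_i}$; for the reverse inequality, pick $\varepsilon$-optimal controls $u^{(i),\varepsilon}$ for Problems $(D_{s,x_i})$ via Lemma~\ref{lem_approximate_optimal}, paste them into $v^\varepsilon:=\sum_i u^{(i),\varepsilon}1_{A_i}\in\mathscr V[s,T]$, observe $\mathbb J(s,\zeta;v^\varepsilon)=\sum_i\mathbb J(s,x_i;u^{(i),\varepsilon})1_{A_i}\le\sum_i\mathbb V(s,x_i)1_{A_i}+\varepsilon$, and let $\varepsilon\downarrow0$. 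Thus $\mathbb V(s,\zeta)=\sum_i\mathbb V(s,x_i)1_{A_i}=\sum_i V(s,x_i)1_{A_i}=V(s,\zeta)$, the middle equality being \eqref{eq:3.13}. For general $\zeta$, approximate by simple $\mathscr F_s$-measurable $\zeta_n\to\zeta$ a.s.\ with $|\zeta_n|\le|\zeta|+1$: then \eqref{eq:3.16} forces $\mathbb V(s,\zeta_n)\to\mathbb V(s,\zeta)$ a.s., the continuity from the previous paragraph forces $V(s,\zeta_n)\to V(s,\zeta)$ a.s., and since the two agree for every $n$ they agree in the limit.

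Finally, applying Theorem~\ref{thm_dpp_1} with $\tau=t\in T_0$, $\gamma=t+\delta\in T_t$, the above $\xi$, and $s=t$ gives
\begin{equation*}
\mathbb V(t,\xi)=\essinf_{u(\cdot)\in\mathscr V[t,t+\delta]}G^{t,\xi;u}_{t,t+\delta}\big[\mathbb V(t+\delta,X^{t,\xi;u}(t+\delta))\big].
\end{equation*}
Using $\mathbb V(t,\xi)=V(t,\xi)$ (the consistency identity at $s=t$, which also covers a deterministic $\xi=x$ through \eqref{eq:3.13}), the consistency identity at $s=t+\delta$ with $\zeta=X^{t,\xi;u}(t+\delta)\in L^2(\Omega,\mathscr F_{t+\delta},\mathbb P;\mathbb R^n)$ (Lemma~\ref{lem_esti_sde}), and the fact that both $G^{t,\xi;u}_{t,t+\delta}[\cdot]$ and $X^{t,\xi;u}(t+\delta)$ depend on $u$ only through $u|_{[t,t+\delta]}$ (so that the essential infima over $\mathscr V[t,t+\delta]$ and over $\mathscr V[t,T]$ coincide), we arrive at \eqref{eq:3.14}. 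The main obstacle is precisely the consistency identity: reconciling $V$, built as a random field and then composed with the state process, with $\mathbb V$, defined as the conditional value of the family $\{(D_{s,\zeta})\}$, requires the locality of the forward-backward system under $\mathscr F_s$-measurable sets together with the pasting and $\varepsilon$-optimality constructions of Lemmas~\ref{lem_convergence}-\ref{lem_approximate_optimal}, while the passage from simple to general $\zeta$ is the one place where the quantitative Lipschitz modulus \eqref{eq:3.16} is indispensable.
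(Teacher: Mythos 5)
Your proposal is correct and follows essentially the same route as the paper: specialize Theorem \ref{thm_dpp_1} to the deterministic times $t$ and $t+\delta$ and then identify $V(s,\zeta)$ (the random field composed with a random initial state) with $\mathbb V(s,\zeta)$ via simple random variables, locality of the forward--backward system under $\mathscr F_s$-measurable partitions, and the Lipschitz estimate \eqref{eq:3.16} --- which is exactly the content of the paper's Lemmas \ref{lem:3.9} and \ref{lem:3.10}. You in fact supply more detail than the paper does (notably the pasting of $\varepsilon$-optimal controls to get the reverse inequality in the essential infimum for simple $\zeta$, which the paper dismisses with ``in the same vein''), but the underlying argument is the same.
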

Now we present some elementary property  of
the cost functional and  the
value function. Then, Theorem \ref{thm:3.5} is an immediate result of these lemmas.
\begin{lem}\label{lem:3.9}
   Let Assumptions \ref{ass:2.1}-\ref{ass:2.2} be satisfied.
   Then, for any given $(t, \xi) \in [0,T]\times  L^2(\Omega,{\mathscr F_t}, \mathbb P; \mathbb R^n)$ and
$u(\cdot)\in \mathscr V[t, T]$,
we have
\begin{eqnarray}
  J(t,\xi; u(\cdot))=\mathbb J(t,\xi; u(\cdot)).
\end{eqnarray}
\end{lem}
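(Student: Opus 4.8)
The plan is to read the asserted identity $J(t,\xi;u(\cdot))=\mathbb J(t,\xi;u(\cdot))$ as saying that the map $\xi\mapsto\mathbb J(t,\xi;u(\cdot))$ on $L^2(\Omega,\mathscr F_t,\mathbb P;\mathbb R^n)$ is a substitution (Nemytskii-type) operator whose generating random field is precisely $x\mapsto\mathbb J(t,x;u(\cdot))$ evaluated at deterministic $x$. Accordingly, I would first fix $u(\cdot)\in\mathscr V[t,T]$ and record, from Lemma~\ref{lem:3.6} (estimate \eqref{eq:J_convergence} with $\bar u=u$), that $|\mathbb J(t,x;u)-\mathbb J(t,\bar x;u)|\le C(1+|x|+|\bar x|)|x-\bar x|$ a.s.\ for all deterministic $x,\bar x\in\mathbb R^n$. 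A Kolmogorov-type argument on a countable dense set of parameters then yields a modification of $x\mapsto\mathbb J(t,x;u)$ that is locally Lipschitz, hence continuous, in $x$ for a.e.\ $\omega$, and that agrees a.s.\ with $\mathbb J(t,x;u)$ at each deterministic $x$ (again by \eqref{eq:J_convergence}); this continuous version is what is written $J(t,\cdot;u(\cdot))$, so that $J(t,\xi;u(\cdot))(\omega)$ is unambiguously defined for any $\mathscr F_t$-measurable $\xi$ by substituting $\xi(\omega)$.

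Next I would establish the identity for simple $\xi=\sum_{i=1}^N x_i\mathbf 1_{A_i}$, with $\{A_i\}$ a finite $\mathscr F_t$-partition of $\Omega$ and $x_i\in\mathbb R^n$. Fix $i$. Since $A_i\in\mathscr F_t$, multiplying the conditional estimate of Lemma~\ref{lem_esti_sde} (with $\bar u=u$, $\bar\xi=x_i$, $p=2$) by $\mathbf 1_{A_i}$ and using $\mathbf 1_{A_i}(\xi-x_i)=0$ gives $\mathbb E\big[\mathbf 1_{A_i}\sup_{t\le s\le T}|X^{t,\xi;u}(s)-X^{t,x_i;u}(s)|^2\big]=0$, hence $\mathbf 1_{A_i}X^{t,\xi;u}(s)=\mathbf 1_{A_i}X^{t,x_i;u}(s)$ on $[t,T]$ a.s. Consequently the terminal data $h(X^{t,\xi;u}(T))$ and $h(X^{t,x_i;u}(T))$ and the drivers of \eqref{eq:b5} agree on $A_i$, and the analogous conditional stability estimate for BSDEs with jumps underlying Lemma~\ref{lem_bsde} (equivalently, the uniqueness-plus-$\mathbf 1_{A_i}$ pasting already used in the proofs of Lemmas~\ref{lem_convergence}--\ref{lem_approximate_optimal}) gives $\mathbf 1_{A_i}Y^{t,\xi;u}(s)=\mathbf 1_{A_i}Y^{t,x_i;u}(s)$ on $[t,T]$ a.s. Evaluating at $s=t$, multiplying by $\mathbf 1_{A_i}$ and summing over $i$ yields $\mathbb J(t,\xi;u)=\sum_i\mathbf 1_{A_i}Y^{t,x_i;u}(t)=\sum_i\mathbf 1_{A_i}\,\mathbb J(t,x_i;u)=\sum_i\mathbf 1_{A_i}\,J(t,x_i;u)=J(t,\xi;u)$, the last equality being simply the evaluation of the continuous field $x\mapsto J(t,x;u)(\omega)$ at the simple random variable $\xi(\omega)$.

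For a general $\xi\in L^2(\Omega,\mathscr F_t,\mathbb P;\mathbb R^n)$, I would pick simple $\mathscr F_t$-measurable $\xi_n$ with $\xi_n\to\xi$ both a.s.\ and in $L^2$ and $|\xi_n|\le 1+|\xi|$ (e.g.\ a dyadic discretization of $\xi$). By \eqref{eq:J_convergence}, $\mathbb E|\mathbb J(t,\xi_n;u)-\mathbb J(t,\xi;u)|\le C\,\|1+|\xi_n|+|\xi|\|_{L^2}\,\|\xi_n-\xi\|_{L^2}\to0$, so along a subsequence $\mathbb J(t,\xi_n;u)\to\mathbb J(t,\xi;u)$ a.s.; along the same subsequence $J(t,\xi_n;u)\to J(t,\xi;u)$ a.s.\ by continuity in $x$ of the chosen version of $J(t,\cdot;u)(\omega)$ and $\xi_n(\omega)\to\xi(\omega)$. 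Since $J(t,\xi_n;u)=\mathbb J(t,\xi_n;u)$ for every $n$ by the previous step, letting $n\to\infty$ gives the claim.

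The main obstacle is the localization (simple-random-variable) step. Although intuitively clear, it must be argued with care: it relies on \emph{strong} uniqueness of the coupled forward-backward system restricted to the event $A_i$, for which the $\mathscr F_t$-measurability of $A_i$ is indispensable (this is what lets $\mathbf 1_{A_i}$ commute with the $W$- and $\tilde\mu$-integrals over $[t,T]$), and one cannot simply ``multiply the equations by $\mathbf 1_{A_i}$'' because $b(t,0,u)$, $\sigma(t,0,u)$, $g(t,e,0,u)$ and $f(t,0,u,0,0,0)$ need not vanish — the comparison has to pass through the Lipschitz structure and the conditional a priori estimates of Lemmas~\ref{lem_esti_sde} and \ref{lem_bsde}. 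The other two steps (existence of the continuous-in-$x$ version and the $L^2$ approximation) are routine given Lemma~\ref{lem:3.6}.
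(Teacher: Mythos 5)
Your proof is correct and follows the same two-stage skeleton as the paper's: first establish the identity for simple $\mathscr F_t$-measurable $\xi$, then pass to general $\xi\in L^2$ by approximation using the Lipschitz estimate \eqref{eq:J_convergence} of Lemma~\ref{lem:3.6}. Where you differ is in the mechanism for the simple case: the paper forms the pasted process $\sum_i\mathbf 1_{A_i}X^{t,x_i;u}$, checks that it solves the state equation with initial datum $\xi$, and identifies it with $X^{t,\xi;u}$ by uniqueness, whereas you localize the conditional a priori stability estimates of Lemma~\ref{lem_esti_sde} (and the analogous BSDE estimate) to the event $A_i$ and exploit $\mathbf 1_{A_i}(\xi-x_i)=0$. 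Both routes are valid; yours avoids writing down the pasted equation but leans on the conditional form of the stability bounds, which the paper does provide. Two of your additions are in fact improvements on the written proof: (i) you carry out the BSDE step with the recursive cost functional, while the paper's displayed computation slips into the non-recursive form $\mathbb E[\int f\,ds+h\,|\,\mathscr F_t]$, which is not the functional actually defined in \eqref{eq:b5}; and (ii) you construct a continuous-in-$x$ modification of $x\mapsto\mathbb J(t,x;u)$ so that the substitution $J(t,\xi;u)(\omega)=J(t,\xi(\omega);u)(\omega)$ is unambiguous for general random $\xi$ --- a point the paper leaves implicit in the final passage to the limit but which is needed for the statement to make sense beyond simple $\xi$.
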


\begin{proof}
We first consider   the case where $\xi$
is a simple random variable of the form
\begin{eqnarray}
  \xi=\sum_{i=1}^Nx_i\chi_{A_i},
\end{eqnarray}
where $\{A_i\}_{i=1}^N$ is a finite
 partition of $(\Omega, \mathscr F_t)$ and $x_i\in \mathbb R^n,$
 for $1\leq i\leq N.$
In view of  $$\sum^N_{i=1}\Phi(x_i)\chi_{A_i}
=\Phi(\sum^N_{i=1}x_i\chi_{A_i}),$$  we derive
\begin{align}
\sum_{i=1}^N \textbf{1}_{A_i}X^{t,x_i;u}(s)= & \ \xi+
\int_{0}^t b(s,\sum_{i=1}^N\textbf{1}_{A_i}X^{t,x_i;u}(s),u(s))ds
+\int_0^t\sigma(s,\sum_{i=1}^N \textbf{1}_{A_i}X^{t,x_i;u}(s),u(s))d W(s)\nonumber\\
&+ \int_0^t\int_Eg(s, e, \sum_{i=1}^N \textbf{1}_{A_i}X^{t,x_i;u}(s), u(s))\tilde\mu(d e, ds).
\end{align}
Thus, the uniqueness of
the solution to the above SDE leads to
$$ X^{t,\xi;u}(s)=\sum_{i=1}^N \textbf{1}_{A_i}X^{t,x_i;u}(s), \quad s\in [t, T].$$
Therefore, we have
  \begin{eqnarray}
    \begin{split}
      \mathbb J(t,\xi; u(\cdot))=&
      \mathbb E\displaystyle\bigg[\displaystyle\int_t^T
f(s,X^{t,\xi;u}(s),u(s))ds+h(X^{t,\xi;u}(T))\bigg|\mathscr F_t \bigg]
\\=& \mathbb E\displaystyle\bigg[\displaystyle\int_t^T
f(s,\sum_{i=1}^N \textbf{1}_{A_i}X^{t,x_i;u}(s),u(s))ds+h(\sum_{i=1}^N \textbf{1}_{A_i}X^{t,x_i;u}(T))\bigg|\mathscr F_t \bigg]
\\=&  \sum_{i=1}^N\textbf{1}_{A_i}\mathbb E\displaystyle\bigg[\displaystyle\int_t^T
f(s, X^{t,x_i;u}(s),u(s))ds+h(X^{t,x_i;u}(T))\bigg|\mathscr F_t \bigg]
\\=&  \sum_{i=1}^N\textbf{1}_{A_i} J(t,x_i; u) = J(t,\sum_{i=1}^N\textbf{1}_{A_i}x_i; u)=J(t,\xi; u).
    \end{split}
  \end{eqnarray}
For a general $\xi\in L^2(\Omega,{\mathscr F_t}, \mathbb P; \mathbb R^n)$, we can choose a sequence of
simple random variables $\{\xi_i\}$ such that
$$\lim_{i\rightarrow \infty}\xi_i=\xi \quad \mbox{in} \quad L^2(\Omega,{\mathscr F_t},\mathbb P; \mathbb R^n).$$
The final desired result follows from Lemma \ref{lem:3.6}.
\end{proof}

In the same vein, we obtain the following relation between $V(\cdot, \cdot)$ and $\mathbb V(\cdot, \cdot)$.

\begin{lem}\label{lem:3.10}
Under Assumptions \ref{ass:2.1}-\ref{ass:2.2}, for any given $(t, \xi) \in [0,T]\times L^2(\Omega,{\mathscr F_t}, \mathbb P; \mathbb R^n)$,
we have
\begin{eqnarray}\label{eq:3.17}
V(t, \xi)=\mathbb V(t, \xi) .
\end{eqnarray}
\end{lem}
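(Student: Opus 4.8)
The plan is to mimic the proof of Lemma \ref{lem:3.9}, replacing the cost functional $\mathbb J$ by its essential infimum over admissible controls, i.e.\ by $\mathbb V$. Recall that $V(t,x)=\mathbb V(t,x)$ is defined only for deterministic initial states $x\in\mathbb R^n$, whereas $\mathbb V(t,\xi)$ makes sense for arbitrary $\xi\in L^2(\Omega,\mathscr F_t,\mathbb P;\mathbb R^n)$; the claim is that restricting to deterministic states loses no information once one ``glues'' deterministic states together measurably. The argument splits naturally into two steps: first the case of a simple random variable $\xi=\sum_{i=1}^N x_i\chi_{A_i}$ with $\{A_i\}_{i=1}^N$ an $\mathscr F_t$-measurable partition of $\Omega$, and then a passage to the limit for general $\xi$ using the Lipschitz-type continuity estimate \eqref{eq:3.16} from Lemma \ref{lem:3.6}.

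First I would treat the simple case. Fix the partition $\{A_i\}_{i=1}^N\subset\mathscr F_t$. The key observation, exactly as in Lemma \ref{lem:3.9}, is that for any fixed $u(\cdot)\in\mathscr V[t,T]$ one has $X^{t,\xi;u}(\cdot)=\sum_{i=1}^N \chi_{A_i}X^{t,x_i;u}(\cdot)$ by uniqueness of the solution to \eqref{eq:3.1}, and hence $\mathbb J(t,\xi;u)=\sum_{i=1}^N\chi_{A_i}\,\mathbb J(t,x_i;u)$ by uniqueness of the solution to the BSDE \eqref{eq:b5} together with Lemma \ref{lem:3.9}. Now I take the essential infimum over $u(\cdot)$. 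On the set $A_i$ the quantity $\chi_{A_i}\mathbb J(t,\xi;u)=\chi_{A_i}\mathbb J(t,x_i;u)$, so restricting attention to $A_i$ and using that $\chi_{A_i}$ is $\mathscr F_t$-measurable, one gets $\chi_{A_i}\mathbb V(t,\xi)=\chi_{A_i}\essinf_{u}\mathbb J(t,x_i;u)=\chi_{A_i}\mathbb V(t,x_i)=\chi_{A_i}V(t,x_i)$. Summing over $i$ yields $\mathbb V(t,\xi)=\sum_{i=1}^N\chi_{A_i}V(t,x_i)$. To close this step I also need that the function $x\mapsto V(t,x)$ produces a version compatible with the gluing, i.e.\ $\sum_i\chi_{A_i}V(t,x_i)=V(t,\xi)$ in the sense that the right-hand object, being obtained by the same $\essinf$ of the glued $\mathbb J$, equals $\mathbb V(t,\xi)$; this is precisely the identity just derived, so \eqref{eq:3.17} holds for simple $\xi$.

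For the general case I approximate: choose simple $\mathscr F_t$-measurable $\xi_j\to\xi$ in $L^2(\Omega,\mathscr F_t,\mathbb P;\mathbb R^n)$, and along a subsequence $\xi_j\to\xi$ a.s.\ with $\sup_j|\xi_j|\in L^2$. By the simple case, $V(t,\xi_j)=\mathbb V(t,\xi_j)$ for each $j$. Then I pass to the limit on both sides: the estimate \eqref{eq:3.16}, $|\mathbb V(t,\xi_j)-\mathbb V(t,\xi)|\le C(1+|\xi_j|+|\xi|)|\xi_j-\xi|$, gives convergence $\mathbb V(t,\xi_j)\to\mathbb V(t,\xi)$ (in $L^1$, or a.s.\ along a further subsequence via dominated convergence), and the same estimate applied to the deterministic-state restriction shows $x\mapsto V(t,x)$ is locally Lipschitz, so $V(t,\xi_j)=\sum\chi V(t,\cdot)$ converges to $V(t,\xi)$ in the appropriate sense as well. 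Matching the limits gives \eqref{eq:3.17}.

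The main obstacle, and the only genuinely delicate point, is the interchange of the essential infimum with the measurable gluing $\sum_i\chi_{A_i}(\cdot)$ in the simple case — one must be careful that the $\essinf$ defining $\mathbb V(t,\xi)$ over $\mathscr V[t,T]$ really decomposes across the partition, which relies on the fact that admissible controls can be pasted together along $\mathscr F_t$-sets (as already used in the proof of Lemma \ref{lem_convergence}) and on the a.s.\ lower bound $\mathbb J(t,\xi;u)\ge -C(1+|\xi|^2)$ that legitimizes applying Lemma \ref{lem:2.1}/Remark \ref{rmk_essinf}. Everything else is a routine repetition of Lemma \ref{lem:3.9} with $J$ replaced by $V$ and an $L^2$-approximation argument leaning on Lemma \ref{lem:3.6}.
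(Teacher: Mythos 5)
Your proposal is correct and is essentially the proof the paper intends: the paper gives no separate argument for Lemma \ref{lem:3.10} beyond the phrase ``in the same vein,'' meaning exactly the two-step scheme you describe — gluing over an $\mathscr F_t$-measurable partition for simple $\xi$ (using the pasting of controls and the interchange of $\essinf$ with the partition, as in Lemmas \ref{lem_convergence} and \ref{lem:3.9}), followed by $L^2$-approximation via the continuity estimate \eqref{eq:3.16} of Lemma \ref{lem:3.6}. You have in fact supplied more detail than the paper does, correctly isolating the one delicate point (the decomposition of the essential infimum across the partition, justified by $\varepsilon$-optimal controls pasted along $\mathscr F_t$-sets).
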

\begin{rmk}
We see that Theorem \ref{thm:3.5} is a `weaker' version of Theorem \ref{thm_dpp_1}. The extension of Theorem \ref{thm:3.5} to any stopping times instead of $t$ and $t+\delta$ seems to be non-trivial. In many cases, it requires the continuity of the value function with respect to both $t$ and $x$. In \cite{li2009stochastic}, Li and Peng considered similar optimal control problems but with deterministic coefficients. They proved that the value function, which is a deterministic function, is $1/2$-H\"older continuous with respect to $t$. However, in our framework, it is not the case. In fact, from our later result, the value function is the solution of BSPDE and, thus, is only right continuous with left limits. In \cite{tang2015dynamic}, Tang proved that the result in Lemma  \ref{lem:3.10} holds true also for random time $\tau$. This will yield the stronger version of dynamic programming principle. But, their results rely on the linear-quadratic structure of the control problem and the aggregation of a $\mathcal T$-supermatingale family. See \cite{tang2015dynamic} and  \cite{zhang2020backward} for details.
\end{rmk}
\section{Stochastic HJB equation with jumps}

In this section, we introduce a stochastic HJB equation driven by the Brownian motion and the Poisson
random measure that is associated
with our optimal control problem \eqref{pro:2.1}. The equation is derived from the dynamic programming
principle under sufficient smoothness assumptions on the value function.
Let $H: {\cal T}\times \Omega\times \mathbb R^n \times \mathbb  R^k \times \mathbb R^{n\times n}\times \mathbb R$ be defined by
\begin{eqnarray}\label{eq:4.1}
H(t,x,u, p, q, A,k) := f(t,x,u,\sigma^T p+q,k) +\big(p,b(t,x,u)\big)+ \big ( q,\sigma (t, x, u) \big ) + \frac{1}{2} \mbox{Tr}\big[A \sigma\sigma^\top(t,x,u)\big],
\end{eqnarray}
where $\mbox{Tr}[\cdot]$ denotes the trace of a square matrix.

Now we introduce a fully nonlinear backward stochastic partial differential-integral
equation driven by the Brownian motion $B$ and the Poisson
random measure $\tilde \mu$. The differential form and the integral form of this equation is, respectively, given by
\begin{eqnarray}\label{eq:16}
\left\{
\begin{aligned} -d V(t,x) =&\displaystyle\inf_{u\in U}\bigg\{H(t,x,u, DV(t,x),D \Phi(t,x), D^2 V(t,x),\int_E \left(\mathcal I V(t,e,x,u)+\Psi(t,e,x+g(t,e,x,u))\right)l(t,e)\nu(de)) \\
&+\displaystyle\int_{E}\big[\mathcal I V(t,e,x,u)-\displaystyle
(g(t, e,x,u), D V(t,x))\big]\nu(d e)+\int_{E}\big[\mathcal I \Psi(t,e,x,u)\big]\nu(d e)\bigg\}dt\\
&-\Phi(t,x)dW(t)-\displaystyle\int_{E} \Psi (t, e,x)\tilde\mu(d e,dt),\\
V(T,x)=& \ h(x),
\end{aligned}
\right.
\end{eqnarray}
and
\begin{align}\label{eq:4.3}
&V(t,x)\\
=&\ h(x)+\displaystyle \int_t^T\inf_{u\in
U}\displaystyle\bigg\{H(s,x,u, DV(s,x),D \Phi(s,x) (s), D^2 V(s,x),\int_E \left(\mathcal I V(t,e,x,u)+\Psi(t,e,x+g(t,e,x,u))\right)l(t,e)\nu(de)) \nonumber \\
&+\int_{E}\big[\mathcal I V(s,e,x,u)-\displaystyle
(g(s, e,x,u), D V(s,x))\big]\nu(d e) \nonumber +\int_{E}\big[\mathcal I \Psi(s,e,x,u)\big]\nu(d e)\bigg\}ds \nonumber \\
&-\int_t^T\Phi(s,x)dW(s)
-\int_t^T\displaystyle\int_{E} \Psi(s, e,x)\tilde\mu(d e,ds). \nonumber
\end{align}
Here, we define the non-local operator $\mathcal I$ as
$$
\mathcal I \varphi(t,e,x,u)=\varphi(t,x+g(t,e,x,u))-\varphi(t,x).
$$
The above equation \eqref{eq:16} or \eqref{eq:4.3} is the stochastic HJB equation
with jumps associated with Problem \eqref{pro:2.1}, whose solution consists of a triplet of random fields $(V, \Phi, \Psi)$.

\begin{rmk}
When all the mappings involved in the state equation \eqref{eq:b3} and the cost functional \eqref{eq:b5}
are deterministic,  the  value function $V(t,x)$, i.e., the first component of the triplet of the random fields,
becomes a deterministic function with respect to $(t,x)$, and the corresponding stochastic HJB
degenerates to a deterministic nonlinear second-order partial differential equation,
i.e., the HJB equation in the usual sense:
\begin{eqnarray}\label{eq:b16}
\left\{
\begin{aligned} -d V(t,x) =&\displaystyle\inf_{u\in U}\bigg\{H(t,x,u, DV(t,x),0, D^2 V(t,x),\int_E \mathcal I V(t,e,x,u)l(t,e)\nu(de)) \\
&+\displaystyle\int_{E}\big[\mathcal I V(t,e,x,u)-\displaystyle
(g(t, e,x,u), D V(t,x))\big]\nu(d e)\bigg\}dt,\\
V(T,x)=& \ h(x).
\end{aligned}
\right.
\end{eqnarray}
\end{rmk}
Now we give the definition of the predictable classical solution to the stochastic HJB equation  \eqref{eq:4.3}.
\begin{defn} \label{defn:4.1}
  A triplet of random fields $ (V, \Phi, \Psi)$
  is called a predictable classical solution to the stochastic HJB equation  \eqref{eq:4.3} if
  \begin{enumerate}
  \item[(i)]for each $x\in \mathbb R^n$, $(t, \omega)\rightarrow V(t,\omega, x)$ is a predictable c\`adl\`ag process and for
  almost all $(t,\omega)\in {\cal T}\times \Omega,$ $x\rightarrow V(t,x, \omega) $ is
  twice continuously differentiable;
  \item[(ii)] for each  $x\in \mathbb R^n$, $(t, \omega)\rightarrow \Phi(t,\omega, x)$
  is a predictable process and for almost all $(t,\omega)\in {\cal T}\times \Omega,$ $x\rightarrow \Phi(t,x, \omega) $ is continuously differentiable;
  \item[(iii)] for each  $x\in \mathbb R^n $, $(t, \omega, e)\rightarrow \Psi(t,\omega, e, x)$
  is a ${\mathscr P} \otimes {\mathscr B} (E)$ measurable random field and for
  almost all $(t,\omega, e)\in {\cal T}\times \Omega
  \times E,$ $x\rightarrow  \Psi (t,x, \omega) $ is continuous;
 \item[(iv)] the triplet of random fields $ (V, \Phi, \Psi)$ satisfies
  \eqref{eq:4.3}, for all $(t, x)\in {\cal T}\times\mathbb R^n$ a.s..
\end{enumerate}

\end{defn}

\begin{prop}\label{prop:b1}
Let Assumptions \ref{ass:2.1}-\ref{ass:c4} be satisfied.
Suppose that the value function $ V(t,x)$ of Problem \ref{pro:2.1} (see \eqref {eq:3.13}) can be written as a semimartingale of the
following form:
\begin{eqnarray}
V(t,x)=h(x)+\int_t^T\Gamma(s, x)ds-\int_t^T \Psi(s,x)dW(s) -\int_t^T \int_{{E}}\Psi( s, e,x)\tilde\mu(d e,dt),
\quad (t,x)\in {\cal T} \times  \mathbb R^n,
\end{eqnarray}
where $ (V, \Phi, \Psi)$ is a given triplet of random fields satisfying the regular conditions  (i)-(iii) in Definition \ref{defn:4.1}
and the random field in the drift term, i.e., $(t,\omega, x)\rightarrow  \Gamma(t, \omega, x)$, is a given ${\mathscr P} \otimes {\mathscr B} (\mathbb R^n)$ measurable mapping. Assume that the four random fields satisfy the following regularity conditions:
 \begin{enumerate}
 \item[(a)]
$V$, $\Phi$, $\Psi$, $\Gamma$, and
their  involved partial derivatives
with respect to $x$ are continuous in $x\in \mathbb R^n$;
\item[(b)] There exists a predictable process $L \in M^{2,2}_{\mathscr{F}}(0,T;{\mathbb R})$ such that
$$
|V(t,x)|+|\Gamma(t,x)|+|\Phi(t,x)|+\int_E |\Psi(t,x,e)|\nu(de) \le L_t(1+|x|^2),
$$
$$
|D V(t,x)|+|D\Gamma(t,x)|+|D\Phi(t,x)|+\int_E |D\Psi(t,x,e)|\nu(de) \le L_t(1+|x|),
$$
and
$$
|D^2 V(t,x)| \le L_t.
$$
\end{enumerate}
If, in addition, for each $(t,x)$, the optimal control $u^{*,t,x}$ exists,
then $(V, \Phi, \Psi)$ is a classical solution to the stochastic HJB equation \eqref{eq:4.3}.
\end{prop}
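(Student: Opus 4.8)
The plan is to localise in time and exploit the dynamic programming principle of Theorem~\ref{thm:3.5}. Fix $(t,x)\in{\cal T}\times\mathbb R^n$, a small $\delta>0$ with $t+\delta\le T$, and an admissible $u(\cdot)\in\mathscr V[t,T]$; set $X(\cdot):=X^{t,x;u}(\cdot)$ and $\Theta^u(s):=V(s,X(s))$. Write $\mathcal H^v(s,x)$ for the full integrand of \eqref{eq:4.3} evaluated at the constant control value $v\in U$ (so $\mathcal H^v$ contains $H$ together with $\int_E[\mathcal I V-(g,DV)]\nu(de)$ and $\int_E\mathcal I\Psi\,\nu(de)$). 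The first step is to apply the It\^o--Wentzell (It\^o--Kunita) formula for a random field composed with a jump diffusion to $\Theta^u$ on $[t,t+\delta]$; conditions (i)--(iii) of Definition~\ref{defn:4.1} and (a)--(b) of the proposition, together with Assumption~\ref{ass:2.1} on $g$ and the exponential-moment bound on $\rho$ (which make $\mathcal I V$, $\mathcal I\Psi$ and $\Psi(\cdot,\cdot,\cdot+g)$ $\nu$-integrable with the requisite polynomial growth in $x$), license the computation and yield, after regrouping,
\[
-d\Theta^u(s)=\Bigl[\Gamma(s,X(s))-\mathcal H^{u(s)}(s,X(s))+f\bigl(s,X(s),u(s),\Theta^u(s),\bar Z^u(s),\bar k^u(s)\bigr)\Bigr]\,ds-\bar Z^u(s)\,dW(s)-\int_E\bar K^u(s,e)\,\tilde\mu(de,ds),
\]
where $\bar Z^u,\bar K^u$ are the martingale integrands produced by the formula and $\bar k^u(s):=\int_E\bar K^u(s,e)l(s,e)\nu(de)$ equals the non-local argument of $H$ in \eqref{eq:4.3} evaluated along $X$. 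The computational core --- which I expect to be the main obstacle --- is to check that the It\^o corrections (the $\sigma$-cross term, the $\sigma\sigma^\top$ second-order term, the jump of $\Theta^u$, equal to $\mathcal I V(s,e,X(s-),u(s))+\Psi(s,e,X(s-)+g(s,e,X(s-),u(s)))$, and its $\nu$-compensator combined with the field's own $\nu$-compensation) collapse \emph{exactly} into $H$ and the two non-local terms of \eqref{eq:4.3}.

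Next, $\tilde Y(s):=G^{t,x;u}_{s,t+\delta}[V(t+\delta,X(t+\delta))]$ solves by definition the BSDE on $[t,t+\delta]$ with terminal value $\Theta^u(t+\delta)$ and driver $f(s,X(s),u(s),\cdot,\cdot,\cdot)$. By the displayed identity the remainder $R^u:=\Theta^u-\tilde Y$ solves a linear BSDE with zero terminal datum and source $\Gamma(s,X(s))-\mathcal H^{u(s)}(s,X(s))$; Assumption~\ref{ass:2.2}(iii) ($f$ nondecreasing in its last argument, $l\ge0$) makes the associated adjoint process $\Lambda^t_\cdot$ strictly positive ($\Lambda^t_t=1$), so
\[
R^u(t)=\mathbb E\Bigl[\int_t^{t+\delta}\Lambda^t_s\bigl(\Gamma(s,X(s))-\mathcal H^{u(s)}(s,X(s))\bigr)\,ds\ \Big|\ \mathscr F_t\Bigr].
\]
Theorem~\ref{thm:3.5} together with $X(t)=x$ gives $V(t,x)=\Theta^u(t)\le\tilde Y(t)$, i.e.\ $R^u(t)\le0$. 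Taking $u(\cdot)\equiv v$ constant, dividing by $\delta$ and letting $\delta\downarrow0$ --- using $X^{t,x;v}(s)\to x$ and $\Lambda^t_s\to1$ as $s\downarrow t$, the $x$-continuity in (a), and Lebesgue differentiation for the $L^1(\Omega)$-valued maps $s\mapsto\Gamma(s,\cdot,x)$, $s\mapsto\mathcal H^v(s,\cdot,x)$, Bochner integrable by (b) --- yields $\Gamma(t,x)\le\mathcal H^v(t,x)$ for a.e.\ $(t,\omega)$, for each fixed $x$ and $v\in U$. Running $v$ over a countable dense subset of the compact $U$ and using continuity of $v\mapsto\mathcal H^v$ gives $\Gamma(t,x)\le\inf_{v\in U}\mathcal H^v(t,x)$ for a.e.\ $(t,\omega)$, and $x$-continuity upgrades this to all $x$, a.s.

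For the reverse inequality I use the hypothesis that for each $(t,x)$ an optimal control $u^*=u^{*,t,x}$ of Problem~$(D_{t,x})$ exists. By the optimality principle implicit in Theorems~\ref{thm_dpp_1}--\ref{thm:3.5} and Lemma~\ref{lem:3.10}, the restriction of $u^*$ to $[s,T]$ is optimal for $(D_{s,X^{t,x;u^*}(s)})$, hence $\Theta^{u^*}(s)=V(s,X^{t,x;u^*}(s))=Y^{t,x;u^*}(s)$ for all $s\in[t,T]$, a.s. Comparing the two representations of this process as an It\^o process driven by $W$ and $\tilde\mu$ --- the one furnished by the It\^o--Wentzell formula above and the one given by the BSDE \eqref{eq:b5} --- forces $\bar Z^{u^*}=Z^{t,x;u^*}$, $\bar K^{u^*}=K^{t,x;u^*}$ and the drift identity $\Gamma(s,X^{t,x;u^*}(s))=\mathcal H^{u^*(s)}(s,X^{t,x;u^*}(s))$ for a.e.\ $(s,\omega)\in\llbracket t,T\rrbracket$. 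Combined with the inequality just proved (evaluated at $x=X^{t,x;u^*}(s)$) this gives $\Gamma(s,X^{t,x;u^*}(s))=\inf_{v\in U}\mathcal H^v(s,X^{t,x;u^*}(s))$ for a.e.\ $s$, and a final $\delta\downarrow0$ Lebesgue-point argument along the trajectory (using $X^{t,x;u^*}(s)\to x$) yields $\Gamma(t,x)=\inf_{v\in U}\mathcal H^v(t,x)$ for a.e.\ $(t,\omega)$, hence for all $x$ by $x$-continuity. Since the assumed representation of $V$ and \eqref{eq:4.3} differ only through their $ds$-integrands, substituting $\inf_{v\in U}\mathcal H^v$ for $\Gamma$ shows $(V,\Phi,\Psi)$ satisfies \eqref{eq:4.3} for every $(t,x)$, a.s.; as $(V,\Phi,\Psi)$ already meets conditions (i)--(iii) of Definition~\ref{defn:4.1}, it is a predictable classical solution.

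Besides the It\^o--Wentzell bookkeeping, the only recurring technicality is the passage from the integrated inequality $R^u(t)\le0$ (respectively the drift identity along the optimal trajectory) to a pointwise statement; this is standard once phrased via $L^1(\Omega)$-valued Lebesgue points, and it is precisely the reason the identity is obtained only for a.e.\ $t$ --- which, since \eqref{eq:4.3} is an integral identity in time, is all that is needed.
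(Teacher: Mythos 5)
Your proposal follows the same overall skeleton as the paper's proof --- apply the It\^o--Wentzell formula to $V(s,X(s))$, compare with the BSDE defining $G^{t,x;u}_{\cdot,t+\delta}[V(t+\delta,X(t+\delta))]$, linearize to get a weighted integrated inequality involving an adjoint process, and then use the assumed optimal control $u^{*,t,x}$ to force equality --- but you localize in a genuinely different, and more elementary, way. For the inequality $\Gamma\le\inf_{v}\mathcal H^v$, the paper keeps the control arbitrary, strips off the adjoint weight by subdividing $[t,t+\delta]$ and using $\mathbb E[|\xi_s-1|^2\,|\,\mathscr F_t]\le C|s-t|$, tests the resulting unweighted inequality against nonnegative processes to get $F(s,X(s),u(s))\ge0$ a.e.\ along every controlled trajectory, and then converts ``along trajectories'' into ``at every $x$'' through the stochastic flow, its inverse, a spatial cutoff and the Jacobian-determinant change of variables (this is where Lemma~\ref{lem-gradient} and the determinant bound of Assumption~\ref{ass:c4} are used). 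You instead fix $x$, take the control constant, divide by $\delta$ and invoke Lebesgue differentiation of the Bochner-integrable maps $s\mapsto\Gamma(s,\cdot,x)$, $s\mapsto\mathcal H^v(s,\cdot,x)$, finishing with countable density in $(x,v)$ and the continuity in (a); the $O(\sqrt\delta)$ decay of $\mathbb E|X(s)-x|$ and of $\mathbb E|\Lambda^t_s-1|$ together with $L\in M^{2,2}_{\mathscr F}$ indeed controls the error terms at Lebesgue points of $s\mapsto\mathbb E[L_s^2]$, so this works and bypasses the flow/Jacobian machinery entirely (constant controls suffice because the HJB infimum is over points of $U$, not over control processes). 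For the converse, the paper's time-mollification of $F(\cdot,x)$ against $\zeta_n$ and your Lebesgue-point argument along the optimal trajectory are the same estimate in different clothing. The one ingredient you use that the paper does not state is the identity $V(s,X^{*}(s))=Y^{t,x;u^*}(s)$ for all $s$, which requires the \emph{strict} comparison theorem for BSDEs with jumps on top of the DPP; it does hold here because Assumption~\ref{ass:2.2}(iii) makes the linearized jump coefficient nonnegative, but you should flag it, whereas the paper sidesteps it by deriving the equality version of the integrated inequality directly from \eqref{eq:3.14} and re-running its test-process argument.
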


\begin{proof}
To prove
$ (V, \Phi, \Psi)$ is a
classical solution to the stochastic HJB equation \eqref{eq:4.3}, by
 Definition \ref{defn:4.1}, we only need to
 show the following equality
 holds for all $(t, x)\in [0,T]\times\mathbb R^n$ a.s.,
 \begin{eqnarray}
 \Gamma (t,x)&=&\inf_{u\in U}\bigg\{H(t,x,u, DV(t,x),D \Phi(t,x), D^2 V(t,x),\int_E \left(\mathcal I V(t,e,x,u)+\Psi(t,x+g(t,e,x,u))\right)l(t,e)\nu(de)) \nonumber \\
&&+\int_{E}\big[\mathcal IV (t,e,x,u)-(g(t, e,x,u), D V(t,x))\big]\nu(d e) \nonumber +\int_{E}\big[\mathcal I \Psi(t,e,x,u)\big]\nu(d e)\bigg\},
 \end{eqnarray}
which implies that Condition $(iv)$ in  Definition \ref{defn:4.1} holds.

Let $X^{0,x;u}$  be the state process
corresponding to the control $u(\cdot) \in \mathscr V[0, T]$ for Problem \ref{pro:3.1} with the initial
data $(t,\xi)=(0, x)$. Whenever there is no risk of confusion, we abbreviate $X^{0,x;u}$ as $X$. Applying the It\^o-Ventzell formula to the value function (see \cite{chen2015semi} for It\^o-Ventzell formula with jump processes), we obtain
\begin{eqnarray}\label{eq:4.16}
&& V(t+\delta, X(t+\delta))) -  V(t,X(t)) \nonumber \\
&&= - \int_t^{t+\delta}\Gamma(s, X(s-))ds
+\int_t^{t+\delta}\Phi(s, X(s-))dW(s) +\int_t^{t+\delta}\int_{E}\Psi(s, X(s-)
+g(s,e, X(s-), u(s)))\tilde\mu(ds,d e) \nonumber \\
&& \quad +\int_t^{t+\delta}{\cal L}^u V(s, X(s-))ds+\int_t^{t+\delta}
\Big(D V(s,X(s-)),\sigma(s,x(s),u(s))dW(s)\Big) +\int_t^{t+\delta}\int_{E} \mathcal I V(s,e,X(s-),u(s))\tilde\mu(de,d s) \nonumber\\
&& \quad + \int_t^{t+\delta} \int_{E}\big[\mathcal I V(s,e,X(s-),u(s))-(DV(s,X(s-)),\sigma(s,e,X(s-),u(s)))\big]\nu (de)ds \nonumber\\
&&\quad+ \int_t^{t+\delta}\Big(D \Phi (s,X(s-)),\sigma(s,X(s),u(s))\Big)ds+ \int_t^{t+\delta}\int_{E}\mathcal I \Psi(s,e,X(s-),u(s))\nu(d e)ds,
\end{eqnarray}
with
$$
\mathcal L^{u}V=\big(DV,b(t,x,u)\big)+ \frac{1}{2} \mbox{Tr}\big[D^2V \sigma\sigma^\top(t,x,u)\big] .
$$
On the other hand,  consider the following BSDE:
\begin{align*}
&dY(s)=-f(s,X(s),u(s),Y(s),Z(s),\int_E K(s,e)l(s,e)\nu(de))ds+Z(s)dW(s)+\int_E K(s,e)\tilde\mu (ds,de), \\ &Y(t+\delta)=V(t+\delta,X(t+\delta)).
\end{align*}
From the dynamic programming principle \eqref{eq:3.14}, it holds that $V(t,X(t)) \le Y(t)$. Define
\begin{align}
F (t,x,u)=&-\Gamma(t,x)+H(t,x,u, DV(t,x),D \Phi(t,x), D^2 V(t,x),\int_E \left(\mathcal I V(t,e,x,u)+\Psi(t,x+g(t,e,x,u))\right)l(t,e)\nu(de)) \nonumber \\
&+\int_{E}\big[\mathcal IV (t,e,x,u)-(g(t, e,x,u), D V(t,x))\big]\nu(d e) \nonumber +\int_{E}\big[\mathcal I \Psi(t,e,x,u)\big]\nu(d e).
\end{align}
Let
$$Z'(s):=\Phi(s,X(s-))+\sigma(s,X(s),u(s))^\top DV(s,X(s)),$$
and
$$
K'(s,e)= \mathcal I V(s,e,X(s-),u(s))+\Psi(s,e,X(s-)+g(s,e,X(s-),u(s))).
$$
Then, we can see that $V(s,X(s))$ satisfies the following BSDE
\begin{equation*}
\begin{split}
dV(s,X(s))=&-\left(f(s,X (s),u(s),V(s,X(s)),Z'(s),\int_E K'(s,e)l(s,e)\nu(de))-F(s,X(s),u(s))\right)ds\\
&+Z'(s)dW(s)+\int_E K'(s,e)\tilde\mu (ds,de).
\end{split}
\end{equation*}
Following the argument of the comparison principle for BSDEs, we derive that
$$
0 \le Y(t)-V(t,X(t))=\mathbb E\left[ \int_t^{t+\delta}\xi_s F(s,X(s),u(s))ds \bigg |\mathcal F_t\right],
$$
where $\xi_s $ is the solution of a linear SDE
$$
d\xi_s=\alpha_s\xi_sds+\beta_s\xi_sdW(s)+\int_E \gamma(s,e)\xi_{s-}\tilde \mu(de,ds), \quad \xi_t=1,
$$
with the coefficients $\alpha$, $\beta$, and $\gamma$ being bounded processes. Their bounds are determined by the Lipschitz constants of $f$ and the bounds on $l$. Then, from the classical estimates for SDEs, we have that
$$
\mathbb E\left[ |\xi_s-1|^2 |\mathcal F_t\right]\le C|t-s|.
$$
To emphasize the dependence on $t$ and $\delta$, we denote $\xi$ as $\xi^{t,\delta}$. Then, we claim that for any $t$ and $\delta$,
\begin{eqnarray}\label{ineq_Q_expec}
\mathbb E\left[ \int_t^{t+\delta}F(s,X(s),u(s))ds\bigg |\mathcal F_t\right]\ge 0,\ \ \ \mbox{a.s.}.
\end{eqnarray}
To see this, for fixed $t$ and $\delta$, similar to the above arguments we have that for any $n$ and $k\le n$,
\begin{equation}\label{esti_linear_SDE}
\mathbb E\left[ \int_{t+\frac{k}{n}\delta}^{t+\frac{k+1}{n}\delta} \xi^{t+\frac{k}{n}\delta,\frac{\delta}{n}}_sF(s,X(s),u(s))ds\bigg |\mathcal F_t\right]\ge 0.
\end{equation}

Then, from \eqref{esti_linear_SDE}, we have
\begin{equation*}
\begin{split}
&\mathbb E\left[  \int_{t+\frac{k}{n}\delta}^{t+\frac{k+1}{n}\delta}F(s,X(s),u(s))ds\bigg|\mathcal F_t\right]\\
=&\ \mathbb E\left[ \int_{t+\frac{k}{n}\delta}^{t+\frac{k+1}{n}\delta} \xi ^{t+\frac{k}{n}\delta,\frac{\delta}{n}}_sF(s,X(s),u(s))ds \bigg|\mathcal F_t\right]+\mathbb E\left[ \int_{t+\frac{k}{n}\delta}^{t+\frac{k+1}{n}\delta}(1- \xi ^{t+\frac{k}{n}\delta,\frac{\delta}{n}}_s)F(s,X(s),u(s))ds\bigg| \mathcal F_t\right]\\
\ge &\ \mathbb E\left[ \int_{t+\frac{k}{n}\delta}^{t+\frac{k+1}{n}\delta}(1-\xi ^{t+\frac{k}{n}\delta,\frac{\delta}{n}}_s)F(s,X(s),u(s))ds\bigg|\mathcal F_t\right]\\
\ge & \left( \mathbb E\left[  \int_{t+\frac{k}{n}\delta}^{t+\frac{k+1}{n}\delta}(1-\xi ^{t+\frac{k}{n}\delta,\frac{\delta}{n}}_s)^2ds \bigg |\mathcal F_t\right] \right)^{1/2}\left( \mathbb E\left[   \int_{t+\frac{k}{n}\delta}^{t+\frac{k+1}{n}\delta}| F(s,X(s),u(s))|^2ds\bigg|\mathcal F_t  \right] \right)^{1/2} .
\end{split}
\end{equation*}
Summing over $k$, we have
\begin{equation}\label{DMZ2}
\begin{split}
&\mathbb E\left[  \int_{t}^{t+\delta}F(s,X(s),u(s))ds \bigg|\mathcal F_t\right]\\
\ge& \sum_{k=0}^{n-1} \left( \mathbb E\left[  \int_{t+\frac{k}{n}\delta}^{t+\frac{k+1}{n}\delta}(1-\xi ^{t+\frac{k}{n}\delta,\frac{\delta}{n}}_s)^2ds \bigg |\mathcal F_t\right] \right)^{1/2}\left( \mathbb E\left[   \int_{t+\frac{k}{n}\delta}^{t+\frac{k+1}{n}\delta}| F(s,X(s),u(s))|^2ds\bigg|\mathcal F_t  \right] \right)^{1/2}\\
\ge &\left( \sum_{k=0}^{n-1} \mathbb E\left[  \int_{t+\frac{k}{n}\delta}^{t+\frac{k+1}{n}\delta}(1-\xi ^{t+\frac{k}{n}\delta,\frac{\delta}{n}}_s)^2ds \bigg |\mathcal F_t\right] \right)^{1/2} \left( \mathbb E\left[   \int_{t}^{t+\delta}| F(s,X(s),u(s))|^2ds\bigg|\mathcal F_t  \right] \right)^{1/2},
\end{split}
\end{equation}
where the last inequality is obtained due to H\"older's inequality.
For fixed $t\in[0,T]$ and any nonnegative random variable $\eta\in\mathscr{F}_t$, it follows from \eqref{ineq_Q_expec} that
\begin{eqnarray*}
	\mathbb E\left[ \int_{0}^{T}F(s,X (s), u (s))\eta I_{[t,t+\delta)}(s)ds \right]=\mathbb E\left\{\eta\mathbb E\left[\int_{t}^{t+\delta}F(s,X (s), u (s))ds \bigg|\mathscr{F}_t\right]\right\}\geq0.
\end{eqnarray*}
Consequently, for any nonnegative simple process $\phi_. \in M_{\mathscr{F}}^2(0,T;\mathbb{R})$,
\begin{eqnarray*}
	\mathbb E\left[ \int_{0}^{T}F(s,X (s), u (s))\phi_sds \right]\geq0.
\end{eqnarray*}
For any nonnegative process $\psi_.\in M_{\mathscr{F}}^2(0,T;\mathbb{R})$, there exists a sequence of nonnegative simple processes $\phi^n_.\in M_{\mathscr{F}}^2(0,T;\mathbb{R})$, $n\in\mathbb{N}$, such that
\begin{eqnarray*}
	\lim_{n\to\infty}\mathbb E\left[ \int_{0}^{T}|\phi^n_s-\psi_s|^2ds \right]=0.
\end{eqnarray*}
Hence,
\begin{align*}
	&\lim_{n\to\infty}\left|\mathbb E\left[ \int_{0}^{T}F(s,X (s), u (s))\phi^n_sds \right]-\mathbb E\left[ \int_{0}^{T}F(s,X (s), u (s))\psi_sds \right]\right|\\
	&\leq\lim_{n\to\infty}\left( \mathbb E\left[   \int_{0}^{T}|F(s,X (s), u (s))|^2ds\right] \right)^{1/2}\left( \mathbb E\left[   \int_{0}^{T}|\phi^n_s-\psi_s|^2ds\right] \right)^{1/2}=0,
\end{align*}
which implies that
\begin{eqnarray*}
	\mathbb E\left[ \int_{0}^{T}F(s,X (s), u (s))\psi_sds \right]\geq0.
\end{eqnarray*}
Noting the arbitrariness of the nonnegative process $\psi_.$, we have that
\begin{eqnarray*}
	F(s,X (s), u (s)) \ge 0, \ \ \ {\rm for\ a.e.}\ s\in[0,T],\ {\rm a.s.}.
\end{eqnarray*}
Given an admissible control $u$, let $\mathbb X^x(s)$ be the stochastic flow generated by the SDE \eqref{eq:3.1} with the initial condition $X(0)=x$.
From Lemma \ref{lem_esti_sde}, with probability $1$, for each $s$, $\mathbb  X^\cdot(s)$ is a diffeomorphism of class $C^1$. For each $x_i$, we also have that
$$
F(s,\mathbb X^{x_i}(s),u(s)) \ge 0, \ \ \ {\rm for\ a.e.}\ s\in{\cal T},\ {\rm a.s.}.
$$
Thus, it holds that for all $x_i$,
$$
F(s,\mathbb X^{x_i}(s),u(s)) \ge 0, \ \ \ {\rm for\ a.e.}\ s\in{\cal T},\ {\rm a.s.}.
$$
Since $F(s,x,u)$ and ${\mathbb  X_s^{x}}$ are continuous with respect to $x$, we obtain that for all $x$,
$$
F(s,\mathbb X^{x}(s),u(s)) \ge 0, \ \ \ {\rm for\ a.e.}\ s\in{\cal T},\ {\rm a.s.}.
$$
From the growth condition of the coefficients and the value function, we see that
$$
|F(t,\mathbb X^{x}(t),u(t))|^2 \le C(1+L^2_t)(1+|\mathbb X^x(t)|^4).
$$
Then,
\begin{align*}
\mathbb E\left[  \int_0^T |F(t,\mathbb X^{x}(t),u(t))|^2dt\right]
&\le C\mathbb E\left[  \int_0^T (1+L_t^2) (1+|\mathbb X^{x}(t)|^4)dt\right]\\
&\le C\mathbb E\left[ \sup_{0\le t\le T} (1+|\mathbb X^{x}(t)|^4) \int_0^T (1+L_t^2)dt\right]\\
&\le C\left( \mathbb E\left[\left( \sup_{0\le t\le T}(1+|\mathbb X^{x}(t)|^4)\right)^2 \right] \right)^{1/2}\left( \mathbb E\left[\left(  \int_0^T (1+L_t^2)dt\right)^2\right]  \right)^{1/2}\\
&\le C(1+|x|^4).
\end{align*}

Now, let $\varphi$ be a smooth function such that
$$
\varphi(x)=\left\{
\begin{split}
&1, &\text{for $|x|\le 1$;}\\
&0, &\text{for $|x| \ge 2$;}\\
&\in [0,1], &\text{otherwise}.
\end{split}
\right.
$$
For $s\in[0,T]$, define $\tilde{\mathbb X}^\cdot(s)$ to be the inverse function of ${\mathbb X}^\cdot(s)$ and consider a random function
$$
g(s,x)=\xi(\mathbb X^{x}(s))\varphi(\frac{x}{N})\varphi(\frac{\tilde{\mathbb X}^x(s)}{N})|\det \partial \tilde{\mathbb X}^x(s)|^{-1}p_s,
$$
where $N\in\mathbb{N}$, $p$ is an arbitrary non-negative, bounded, adapted process, and $\xi$ is a smooth non-negative function with a compact support. We first prove that
$\mathbb E[ \int_0^T \int_{\mathbb{R}^n} F (s,\mathbb X^{x}(s))g(s,x)dxds ]$
is integrable. By H\"older's inequality, it holds that
\begin{equation*}
\begin{split}
\mathbb E\left[ \int_0^T \int_{\mathbb{R}^n}|F (s,\mathbb X^{x}(s),u(s))g(s,x)|dxds  \right]\le &\left( \mathbb E\left[ \int_0^T \int_{\mathbb{R}^n}|F(s,\mathbb X^{x}(s),u(s)) |^2 \varphi(\frac{x}{N})dxds  \right]  \right)^{1/2}\\
&\times\left( \mathbb E\left[ \int_0^T \int_{\mathbb{R}^n}|\det \partial_x \tilde{\mathbb X}^x(s)|^{-2}\xi^2(\mathbb X^{x}(s))\varphi(\frac{x}{N})\varphi^2(\frac{\tilde{\mathbb X}^x(s)}{N})p^2_s  dxds  \right] \right)^{1/2} .
\end{split}
\end{equation*}
For the first term on the right hand side, we have
$$
\mathbb E\left[ \int_0^T \int_{\mathbb{R}^n}|F(s,\mathbb X^{x}(s),u(s))|^2 \varphi(\frac{x}{N})dxds  \right]\le
\int_{|x|\le N+2} \mathbb E\left[  \int_0^T |F(s,\mathbb X^{x}(s),u(s))|^2ds\right]dx<\infty.
$$
Note that $\mathbb X^{\tilde{\mathbb X}^x(s)}(s)=x$. Hence
$
\partial_y\mathbb X^{y}(s)|_{y=\tilde{\mathbb X}^x(s)}\partial_x\tilde{\mathbb X}^x(s)=I
$,
and thus $
|\det\partial_x\tilde{\mathbb X}^x(s)|^{-1}=|\det\partial_y\mathbb X^{y}(s)|_{y=\tilde{\mathbb X}^x(s)}|
$.
For the second term, it holds that
\begin{align*}
\mathbb E\left[ \int_0^T \int_{\mathbb{R}^n}|\det\partial_x\tilde{\mathbb X}^x(s)|^{-2}\xi^2(\mathbb X^{x}(s))\varphi(\frac{x}{M})\varphi^2(\frac{\tilde{\mathbb X}^x(s)}{N})p^2_s  dxds  \right]
&\le  C\mathbb E\left[ \int_0^T\int_{\mathbb{R}^n}|\det\partial_y\mathbb X^{y}(s)|_{y=\tilde{\mathbb X}^x(s)}|^{2}\varphi(\frac{\tilde{\mathbb X}^x(s)}{N})dxds  \right]\\
&=C\mathbb E\left[ \int_0^T \int_{\mathbb{R}^n}|\det\partial_x\mathbb X^{x}(s)|^2\varphi(\frac{x}{N}) |\det \partial_x \mathbb X^{x}(s)|dxds  \right]\\
&\le C \int_{|x| \le N+2} \mathbb E\left[ \int_0^T |\det \partial_x\mathbb X^{x}(s)|^3ds  \right]dx <\infty .
\end{align*}
This confirms that
$
\mathbb E[ \int_0^T \int_{\mathbb{R}^n} F (s,\mathbb X^{x}(s),u(s))g(s,x)dxds ]
$
is integrable. Then we have
\begin{align*}
0 &\le \mathbb E\left[ \int_0^T \int_{\mathbb{R}^n} F (s,\mathbb X^{x}(s),u(s))g(s,x)dxds  \right]\\
&= \mathbb E\left[ \int_0^T \int_{\mathbb{R}^n} F (s,\mathbb X^{x}(s),u(s))\xi(\mathbb X^{x}(s))\varphi(\frac{x}{N})\varphi(\frac{\tilde{\mathbb X}^x(s)}{N})|\det \partial_x \tilde{\mathbb X}^x(s)|^{-1}p_s dxds  \right]\\
&= \mathbb E\left[ \int_0^T \int_{\mathbb{R}^n}F (s,x,u (s))\xi(x)\varphi(\frac{\tilde{\mathbb X}^x(s)}{N})\varphi(\frac{\tilde{\mathbb X}^{\tilde{\mathbb X}^x(s)}(s)}{N})p_s dxds  \right].
\end{align*}
Letting $N \rightarrow +\infty$, the above inequality reduces to
$$
\mathbb E\left[ \int_0^T \int_{\mathbb{R}^n} F (s,x,u(s))\xi(x) p_s dxds  \right] \ge 0.
$$
From the arbitrariness of $\xi$, $p$, and $u$, we have that
\begin{equation}\label{inf_less}
\inf_u F (s,x,u) \ge 0, \quad \text{for all $x$, $ds\times P$-a.s..}
\end{equation}

Next, we show that the equality holds. Given any $(t,x) \in {\cal T}\times\mathbb R^n$, let $u^{*,t,x}$ be the corresponding optimal control. From dynamic programming principle \eqref{eq:3.14}, we see that the equality holds in \eqref{ineq_Q_expec} when we replace the arbitrary control $u$ with the optimal control $u^{*,t,x}$. Following previous arguments, we see that
$$
F(s,X^{u^{*,t,x};x} (s);u^{*,t,x} (s))=0, \quad \text{for a.e. $s \in[t,T]$, a.s..}
$$
Denote by $F(s,x):=\inf\limits_u F(s,x,u)$. Then, we see that
$$
F(s,x,0) \ge F(s,x) \ge 0.
$$
This implies that
$$
|F(s,x)|\le |F(s,x,0)|\le CL_t(1+|x|^2),
$$
which further yields that $F(\cdot,x) \in M^{2,1}_{\mathscr{F}}(0,T;{\mathbb R})$, for any $x$. Let $\zeta(t)$ be a mollifier defined on $[0,+\infty)$, i.e.,
$$\zeta(t)=\left \{\begin{aligned}&C e^{-\frac{1}{1-t^2}}, &\text{ if $ t \le 1$}, \\ &0, &\text{ otherwise}, \end{aligned}
\right .$$
with the constant $C$ selected so that $\int_0^{\infty} \zeta(t)dt=1$ and $\zeta_n(t)=n\zeta(nt)$. Define
$$
F_n(s,x)=\int_0^{\infty} \zeta_n(\tau)F(s+\tau,x)d\tau.
$$
We have that as $n \rightarrow +\infty$,
\begin{equation}\label{ineq_delta}
\mathbb E\left[  \int_0^T F_n(s,x)ds \right] \rightarrow \mathbb E\left[  \int_0^T F(s,x)ds \right] .
\end{equation}
Note that
\begin{equation*}
\begin{split}
F_n(s,x)=&\int_0^{\infty} \zeta_n(\tau)F(s+\tau,x)d\tau\\
\le& \int_0^{\infty} \zeta_n(\tau)F(s+\tau,x,u^{*,s,x}({s+\tau}))d\tau\\
=& \int_0^{\infty} \zeta_n(\tau)\big(F(s+\tau,x,u^{*,s,x}({s+\tau}))-F(s+\tau,X^{u^{*,s,x};x}({s+\tau}),u^{*,s,x}({s+\tau}))\big)d\tau .
\end{split}
\end{equation*}
For simplicity, we abbreviate $(X^{u^{*,s,x};x},u^{*,s,x})$ as $(X^*,u^*)$. From the assumptions of the proposition, we have
\begin{align*}
&|F(s+u,x,u^{*,s,x})-F(s+u,X^{u^{*,s,x};x}({s+u}),u^{*,s,x}({s+u}))| \\
&\le CL_{s+u}(1+|x|+|X^*({s+u})|+|u^*({s+u})|)|X^*({s+u})-x|.
\end{align*}
Hence,
\begin{align*}
&\mathbb E\left| \int_0^{\infty} \zeta_n(u)(F(s+u,x,u^{*,s,x})-F(s+u,X^{u^{*,s,x};x}({s+u}),u^{*,s,x}({s+u})))du \right|\\
&\le C\left(\mathbb E\left[\int_0^{\infty} \zeta_n(u)  L_{s+u}(1+|x|+|X^*({s+u})|+|u^*({s+u})|)^2du \right]\right)^{1/2}\\
&\quad \times \left( \mathbb E\left[   \int_0^{\infty} \zeta_n(u)  L_{s+u} |X^*({s+u})-x|^2du\right] \right)^{1/2}\\
&\le C\left(\mathbb E\left[\int_0^{\infty} \zeta_n(u)  L^2_{s+u}du \right]\right)^{1/2} \left(\mathbb E\left[\int_0^{\infty} \zeta_n(u) (1+|x|+|X^*({s+u})|+|u^*({s+u})|)^4du \right]\right)^{1/4}\\
&\quad \times \left( \mathbb E\left[   \int_0^{\infty} \zeta_n(u) |X^*({s+u})-x|^4du\right] \right)^{1/4} .
\end{align*}
Then, we see that for all $s$,
$$
\mathbb E\left[   \int_0^{\infty} \zeta_n(u) |X^*({s+u})-x|^4du\right] \rightarrow 0 ,
$$
and
$$
\mathbb E\left[\int_0^{\infty} \zeta_n(u) (1+|x|+|X^*({s+u})|+|u^*({s+u})|)^4du \right]
$$
is uniformly bounded with respect to $n$. Moreover, it holds that, for almost $s$ and as $n \rightarrow \infty$,
$$
\mathbb E\left[   \int_0^{\infty} \zeta_n(u) L^2_{s+u}du\right] \rightarrow \mathbb E\left[ L_s^2\right].
$$
Hence,
$$
\liminf_{n\rightarrow\infty} \mathbb E\left[  F_n(s,x) \right]\le 0,
$$
for almost $s$.
From \eqref{ineq_delta}, we have
$$
\mathbb E\left[  \int_0^T F(s,x)ds \right] \le 0.
$$
Combining with the fact that $F (s,x) \ge 0$, we obtain that
$$
F (s,x)=0.
$$
The proof is completed.
\end{proof}

Next, we prove the verification theorem. That is, a classical solution of the backward HJB equation is the value function and characterizes the optimal control. The statement of this result is heavy, but its proof is standard and relies essentially on the It\^o-Ventzell formula.

\begin{thm}
Let Assumptions \ref{ass:2.1}-\ref{ass:c4} be satisfied. Suppose that a triplet of random fields
$(\varphi, \phi, \psi)$ is a classical solution to the stochastic HJB equation \eqref{eq:4.3}, i.e.,
\begin{align}\label{eq:5.1}
\varphi(t,x)=&\ h(x)+\displaystyle \int_t^T\displaystyle\inf_{u\in U}\bigg\{H(s,x,u, D\varphi(s,x),D \phi(s,x), D^2 \varphi(s,x),\int_E \left(\mathcal I\varphi(t,e,x,u)+\psi(t,e,x+g(t,e,x,u))\right)l(t,e)\nu(de)) \nonumber \\
&+\int_{E}\big[\mathcal I\varphi (t,e,x,u)-(g(s, e,x,u), D \varphi(s,x))\big]\nu(d e)+\int_{E}\big[\mathcal I \psi(t,e,x,u)\big]\nu(d e)\bigg\}ds \nonumber \\
&-\int_t^T\phi(s,x)dW(s)-\int_t^T\int_{E}\psi(s, e,x)\tilde\mu(d e,ds),
\end{align}
and satisfies the regularity condition (b) in Proposition \ref{prop:b1}.
Moreover, for almost all $(t, \omega, x)\in [0,T)\times \Omega\times \mathbb R^n$, the infimum in \eqref{eq:5.1} is
achieved at a ${\mathscr P}\otimes \mathscr{B}(\mathbb R^n)/\mathscr{B}(\mathbb R^k)$-measurable random field
$\bar u:(t,\omega,x)\rightarrow \bar u(t, \omega,x)$ taking values in $U$ such that for any given initial state $X(t)=x$,
the following feedback control system
\begin{eqnarray}\label{eq:5.2}
\left\{
\begin{aligned}
d X(s)=& \ b(s,X(s),\bar u(s, X(s))ds+\sigma(s,X(s),\bar u(s,X(s))d W (s) \\
& +\int_Eg(s, e, X({s-}), \bar u(s, X(s-)))\tilde\mu(d e, ds), \quad t\leq s\leq T, \\
X(t) =& \ x,
\end{aligned}
\right.
\end{eqnarray}
has a unique strong solution $\bar X(\cdot)$ and $ (\bar u(\cdot, \bar X(\cdot)); \bar X(\cdot))$
is an admissible pair. Then $\phi(t,x)=V(t,x)$ for all $(t,x)\in [0, T]\times \mathbb R^n$ a.s.,
and the feedback control $\bar u(\cdot,\bar X(\cdot))$ is an optimal control,
i.e., $V(t,x)=J(t,\bar u(\cdot,\bar X(\cdot)))$.
\end{thm}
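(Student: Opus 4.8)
The plan is to prove the two inequalities $\varphi(t,x)\le V(t,x)$ and $\varphi(t,x)\ge V(t,x)$ separately, in each case by applying the It\^o--Ventzell formula (cf.\ \cite{chen2015semi}) to the random field $\varphi$ evaluated along a state trajectory and reading off a backward SDE with terminal value $h(\cdot)$, to which the comparison and uniqueness results for BSDEs with jumps apply. This is the mechanism of the proof of Proposition \ref{prop:b1}, run in the reverse direction: there the drift $\Gamma$ was the unknown and had to be identified with the Hamiltonian infimum, whereas here $(\varphi,\phi,\psi)$ is \emph{given} to solve \eqref{eq:5.1}. Consequently, if $F(s,x,u)$ denotes the random field introduced in the proof of Proposition \ref{prop:b1} (with $\Gamma(s,x):=\inf_{u\in U}\{\cdots\}$ now the prescribed drift coefficient of $\varphi$ in \eqref{eq:5.1}), then, since $\Gamma=\inf_{u\in U}\{\cdots\}$, one has $F(s,\omega,x,u)\ge 0$ for \emph{all} $(s,\omega,x,u)$ and $\inf_{u\in U}F(s,\omega,x,u)=0$ outright; no analogue of the test-function/flow argument of Proposition \ref{prop:b1} is needed for this sign.

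For $\varphi\le V$: fix $(t,x)$ and an arbitrary $u(\cdot)\in\mathscr V[t,T]$ with associated state $X(\cdot)=X^{t,x;u}(\cdot)$. Applying It\^o--Ventzell to $\varphi(s,X(s))$ and reorganising exactly as in \eqref{eq:4.16} and the lines following it, one finds that $\varphi(\cdot,X(\cdot))$ solves the BSDE with terminal value $h(X(T))$ and driver $s\mapsto f\big(s,X(s),u(s),\varphi(s,X(s)),Z'(s),\int_E K'(s,e)l(s,e)\nu(de)\big)-F(s,X(s),u(s))$, where $Z'$ and $K'$ are the processes defined in the proof of Proposition \ref{prop:b1}, condition (b) placing them in the appropriate spaces. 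Since $F\ge 0$, this driver is dominated by that of the cost BSDE \eqref{eq:b5} for $Y^{t,x;u}$, which carries the same terminal value $h(X(T))$; the comparison principle for BSDEs with jumps (valid under Assumption \ref{ass:2.2}(iii)) then gives $\varphi(t,x)\le Y^{t,x;u}(t)=J(t,x;u)$ a.s.. Taking the essential infimum over $u(\cdot)\in\mathscr V[t,T]$ and invoking Lemmas \ref{lem:3.9}--\ref{lem:3.10} yields $\varphi(t,x)\le \mathbb V(t,x)=V(t,x)$ for every $(t,x)$.

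For $\varphi\ge V$ and optimality: run the same computation along the closed-loop trajectory $\bar X(\cdot)$ solving \eqref{eq:5.2} with the control $\bar u(\cdot)\triangleq\bar u(\cdot,\bar X(\cdot))$, which is admissible by hypothesis. It\^o--Ventzell shows that $\varphi(\cdot,\bar X(\cdot))$ solves the BSDE with terminal value $h(\bar X(T))$ whose driver is obtained from the previous one by replacing $u$ with $\bar u(s,\bar X(s))$; the crucial claim is that the correction term vanishes, namely $F(s,\bar X(s),\bar u(s,\bar X(s)))=0$ for a.e.\ $s\in[t,T]$, $\mathbb P$-a.s.. Granting this, that BSDE coincides with the cost BSDE \eqref{eq:b5} for $Y^{t,x;\bar u(\cdot,\bar X(\cdot))}$, so by uniqueness $\varphi(s,\bar X(s))\equiv Y^{t,x;\bar u(\cdot,\bar X(\cdot))}(s)$, in particular $\varphi(t,x)=J(t,x;\bar u(\cdot,\bar X(\cdot)))$. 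Combined with $\varphi\le V$ this forces $\varphi(t,x)=V(t,x)=J(t,x;\bar u(\cdot,\bar X(\cdot)))$, i.e.\ $\bar u(\cdot,\bar X(\cdot))$ is optimal.

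I expect the verification of the vanishing of the correction term to be the main obstacle. By hypothesis $\bar u(s,\omega,x)$ realises the infimum defining $\Gamma(s,x)$, so $F(s,\omega,x,\bar u(s,\omega,x))=\inf_{v\in U}F(s,\omega,x,v)=0$ for $ds\times d\mathbb P\times dx$-a.e.\ $(s,\omega,x)$; the difficulty is to pass from this almost-everywhere-in-$x$ statement to the identity along the single random trajectory $x=\bar X(s,\omega)$, which traces out a Lebesgue-null set of $x$-values at each $(s,\omega)$. This is handled by the change-of-variables/stochastic-flow technique from the proof of Proposition \ref{prop:b1}: using the invertibility and $C^1$ regularity of the flow generated by \eqref{eq:5.2} (Assumption \ref{ass:c4} and Lemma \ref{lem-gradient}) together with the Jacobian change of variables, one transports the null set onto a $dx$-null set of initial points, the growth bound (b) guaranteeing that all the integrals involved are finite. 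Once that identity is in hand, the remainder---writing out the It\^o--Ventzell expansion and checking integrability---is routine and parallels Proposition \ref{prop:b1} almost verbatim.
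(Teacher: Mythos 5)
Your proposal follows essentially the same route as the paper's proof: apply the It\^o--Ventzell formula to $\varphi$ along an arbitrary state trajectory, read off a BSDE with terminal value $h(X(T))$ whose driver differs from that of the cost BSDE \eqref{eq:b5} by the nonnegative correction $F\ge 0$, invoke the comparison principle for BSDEs with jumps to get $\varphi\le V$, and then repeat along the feedback trajectory where the correction vanishes to obtain $\varphi(t,x)=J(t,x;\bar u(\cdot,\bar X(\cdot)))$ and hence equality. The only difference is that you explicitly flag and handle (via the stochastic-flow change of variables) the passage from the a.e.-in-$x$ vanishing of $F(\cdot,\cdot,\bar u)$ to its vanishing along the single optimal trajectory, a point the paper's proof passes over silently; this is a refinement rather than a deviation.
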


\begin{proof}
Let $(v(\cdot),z(\cdot))$ be an arbitrary admissible control pair of Problem $(D_{t,x})$. That is, $z(\cdot)$ solves the
following stochastic differential equation:
\begin{eqnarray} \label{eq:5.3}
\left\{
\begin{aligned}
dz(s)=& \ b(s,z(s),v(s))ds+\sigma(s,z(s),v(s))dW(s)
+\int_Eg(s, e, z({s-}), v(s))\tilde\mu(d e, ds), \quad t\leq s\leq T,\\
z(t)=& \ x .
\end{aligned}
\right.
\end{eqnarray}
Define
\begin{align*}
\Delta(t,x,u):=&\ H(s,x,u, D\varphi(s,x),D \phi(s,x), D^2 \varphi(s,x),\int_E \left(\mathcal I\varphi(t,e,x,u)+\psi(t,e,x+g(t,e,x,u))\right)l(t,e)\nu(de)) \nonumber \\
&+\int_{E}\big[\mathcal I\varphi (t,e,x,u)-(g(s, e,x,u), D \varphi(s,x))\big]\nu(d e)+\int_{E}\big[\mathcal I \psi(t,e,x,u)\big]\nu(d e).
\end{align*}
By applying the It\^o-Ventzell formula to the random field $\varphi (\cdot,x)$ and the state process $z(\cdot)$ (see \eqref{eq:5.1} and \eqref{eq:5.3}), we get
\begin{align}\label{eq:5.4}
\varphi(T,z(T))=&\ \varphi(t,x)-\int_t^{T} \inf_u \Delta(s,z(s-),v(s))dt+\int_t^{T}(D\varphi(s, z(s)),b(s,z(s),v(s)))ds
+ \int_t^{T}(D\phi(s, z(s)),\sigma(s,z(s),v(s)))ds\nonumber\\
&+\frac{1}{2} \int_t^T \mbox{Tr}[D^2\varphi(s, z(s))\sigma \sigma^T(s,z(s), v(s))]ds
+\int_t^T\int_{E}\big[\varphi(s, z(s)+g(s,e,z(s),v(s)))\nonumber \\
&- \varphi(s,z(s)) -(g(s, e,z(s),v(s)), D \varphi(s,z(s)))\big]\nu(d e) \nonumber \\
&+\int_t^T\int_{E}\big[\psi(s,e,z(s))+g(s, e,z(s),v(s)))-\psi(s, e,z(s))\big]\nu(d e)ds\nonumber\\
&+\int_t^T \big[\sigma^T(s,z(s),v(s))D\varphi(s,z(s))+\phi(s,z(s))\big]dW(s)\nonumber\\
&+\int_t^T\int_E \big[\mathcal I \varphi(t,e,z(s-),v(s))+\psi(s,e,z(s-)+g(s,e,z(s),u(s))) \big]\tilde \mu(de,ds)\nonumber\\
=&\ \varphi(t,x)+\int_t^T \big[ \Delta(s,z(s-),v(s))-\inf_u \Delta(s,z(s-),v(s)) \big] ds\nonumber\\
&-\int_t^T f(s,x(s),z(s),\sigma^T(s,z(s),v(s))D\varphi(s,z(s))+\phi(s,z(s)),\nonumber\\
&\qquad \qquad \int_E \left(\mathcal I \varphi(t,e,z(s-),v(s))+\psi(s,e,z(s-)+g(s,e,z(s),u(s)))\right)l(s,e)\nu(ds))ds\nonumber\\
&+\int_t^T \big[\sigma^T(s,z(s),v(s))D\varphi(s,z(s))+\phi(s,z(s))\big]dW(s)\nonumber\\
&+\int_t^T\int_E \big[\mathcal I \varphi(t,e,z(s-),v(s))+\psi(s,e,z(s-)+g(s,e,z(s),u(s)))\big] \tilde \mu(de,ds).
\end{align}
From the comparison principle for BSDEs with jumps, the above inequality leads to
\begin{align}\label{eq:5.5}
\varphi(t, x)
&\leq G^{t,x;u(\cdot)}_{t,T}[\varphi(T,z(T))] =G^{t,x;u(\cdot)}_{t,T}[h(z(T))] =J(t,x;v(\cdot)).
\end{align}
Since $v(\cdot)$ is arbitrary, taking the infimum in \eqref{eq:5.5} gives
\begin{eqnarray}\label{eq-5.6}
\varphi(t,x)\leq V(t,x) , \quad \mbox{a.s.}.
\end{eqnarray}

Finally, again applying the It\^o-Ventzell formula to the random
field $\varphi (\cdot,x)$ (see \eqref{eq:5.1}) and
the state process $\bar X(\cdot)$
associated with the feedback control $ \bar u(\cdot, \bar X(\cdot))$ and
taking conditional expectation
with $\mathscr F_t$, we obtain the equality in
\eqref{eq:5.4},
thereby
\begin{eqnarray}
    \varphi(t, x)=J(t,x;\bar u(\cdot,\bar X(\cdot))).
\end{eqnarray}
Therefore, from \eqref{eq-5.6}
together with the definition of the value function
$V(t,x)$ (see \eqref{eq:3.3}), we have
\begin{eqnarray}\label{eq:5.6}
V(t,x)\leq J(t,x;\bar u(\cdot,\bar X(\cdot))) =\varphi(t,x)\leq V(t,x) .
\end{eqnarray}
Consequently, we conclude that $\varphi(t,x)$ coincides with the value function $ V(t,x)$
and $ (\bar u(\cdot, \bar X(\cdot)), \bar X(\cdot))$ is an optimal pair.
\end{proof}


\section{Backward stochastic evolution equation with jump}
As in the deterministic case, the classical solution of backward HJB equation does not exist in general cases. Thus, this section is devoted to the existence and uniqueness result for the stochastic HJB equation
with jumps in the sense of Sobolev spaces. To this end, we need to recast the stochastic HJB equation with jumps
as a class of backward stochastic evolution equations with jumps in Hilbert
spaces. We refer readers to \cite{PrZa92} for the general theory of stochastic evolution equations in
Hilbert spaces.


\subsection{Backward stochastic evolution equation with jumps}

We first introduce the framework of a Gelfand triple under which the backward stochastic evolution equation will be studied. The Brownian motion $B$ and the Poisson random measure $\tilde \mu$ are defined the same as in previous sections.

Let $V$ and $H$ be two separable (real) Hilbert spaces such that $V$ is densely embedded in $H$.
The space $H$ is identified with its dual space by the Riesz mapping. Then  we can take $H$ as a pivot space and  get a
Gelfand triple $V \subset H= H^*\subset V^{*},$ where  $H^*$ and
$V^{*}$ denote the dual spaces of $H$ and $V$, respectively. Denote
by $\|\cdot\|_{V},\|\cdot\|_{H}$, and $\|\cdot\|_{V^*}$ the norms of
$V,H$, and $V^*$, respectively, by $(\cdot,\cdot)_H$ the inner
product in $H$, and by $\la\cdot,\cdot\ra$ the duality product between
$V$ and $V^{*}$. Moreover, we write $\mathscr{L}(V,V^*)$ the space of bounded
linear transformations of $V$ into $V^*$.

Now we recall a version of It\^o's formula in Hilbert space which will be frequently used in this section (see \cite{gyongy1982stochastics} for the proof).

\begin{lem}\label{lem:c1}
Let $\varphi\in L^{2}(\Omega,\mathscr{F}_{0},P; H)$. Let $Y, Z$, and $\Gamma$  be three
progressively measurable stochastic processes defined on ${\cal T}\times \Omega$ with values in $V$, $H$, and $V^{*}$
such that $ Y\in M_{\mathscr{F}}^2 (0, T; V)$, $Z\in M_{\mathscr{F}}^2 (0, T; H)$, and $\Gamma \in M_{\mathscr{F}}^2 (0, T; V^*)$,
respectively. Let $R$ be a ${\mathscr P} \otimes {\mathscr B} ({E})$-measurable stochastic process defined
on ${\cal T}\times
\Omega\times E$ with values in $H$ such that $R \in M_{\mathscr F}^{\nu, 2}(0,T; H)$.
Suppose that for every $\eta \in V$ and almost every $(t,\omega)\in{\cal T}\times\Omega$, it holds that
\begin{eqnarray*}
    ( \eta,Y)_H =( \eta, \varphi)_H+
    \int_{0}^{t} \la \eta,\Gamma(s) \ra ds
    + \int_0^t( \eta, Z )_HdW(s)+\int_0^t
    \int_{E}( \eta, R(s,e) )_H\tilde \mu(de,ds) .
\end{eqnarray*}
Then, $Y$ is an $H$-valued strongly c\`adl\`ag $\mathscr F_t$-predictable process, satisfying
\begin{eqnarray*}
\mathbb E \bigg [ \sup_{0\leq t\leq T} ||Y||_H^2 \bigg ] \leq \infty ,
\end{eqnarray*}
and the following It\^{o}'s formula holds for the
squared $H$-norm of $Y$:
\begin{eqnarray}\label{eq:6.2}
     ||Y||_H^2&=&||\varphi||^2+
     2\int_0^t\langle \Gamma(s), Y(s) \rangle
     ds +2\int_0^t\langle Z(s), Y(s) \rangle
     dW(s)+ \int_0^t||Z(s)||_H^2ds \nonumber \\
     &&+\int_0^t\int_E\big[ ||R(s,e)||_H^2+
     2 (Y(s), R(s,e)) \big]
     \tilde \mu(de,ds)+\int_0^t\int_E ||R(s,e)||_H^2\nu(de)ds .
\end{eqnarray}
\end{lem}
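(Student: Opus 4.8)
The plan is to deduce the infinite-dimensional It\^o formula from the classical formula for real-valued c\`adl\`ag semimartingales, by projecting the weak identity onto finite-dimensional subspaces of $H$ and then passing to the limit; the Gelfand-triple structure will be used only to control the duality term $\la\Gamma,Y\ra$.

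First I would fix an orthonormal basis $\{e_k\}_{k\ge1}$ of $H$ with $e_k\in V$ for every $k$ (such a basis exists: a countable $V$-dense set is also $H$-dense, and Gram--Schmidt in the $H$-inner product keeps all vectors in $V$), chosen so that the $H$-orthogonal projections $P_n$ onto $V_n:=\mathrm{span}\{e_1,\dots,e_n\}$ are uniformly bounded in $\mathscr L(V)$ and converge strongly to the identity on $V$ as well as on $H$ --- this is available, e.g., via the eigenbasis associated with the triple when $V\hookrightarrow H$ is compact, and in general via a time-mollification as in \cite{gyongy1982stochastics}. Taking $\eta=e_k$ in the hypothesis, the right-hand side exhibits $Y_k(t):=(e_k,Y(t))_H$ as the sum of $(e_k,\varphi)_H$, the absolutely continuous process $\int_0^t\la e_k,\Gamma(s)\ra\,ds$, the continuous local martingale $\int_0^t(e_k,Z(s))_H\,dW(s)$, and the compensated Poisson integral $\int_0^t\int_E(e_k,R(s,e))_H\,\tilde\mu(de,ds)$; hence $Y_k$ has a c\`adl\`ag modification, and $Y^n(t):=P_nY(t)=\sum_{k=1}^nY_k(t)e_k$ is a c\`adl\`ag $V_n$-valued semimartingale with $Y^n(t)\to Y(t)$ in $H$ for a.e.\ $(t,\omega)$.

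Next I would apply the finite-dimensional It\^o formula for jump semimartingales to $x\mapsto\|x\|_H^2=\sum_{k=1}^nx_k^2$ on $V_n$. Using $P_n=P_n^*$, the $H$-orthonormality of $\{e_k\}$, and $\sum_kY_k(s)\la e_k,\Gamma(s)\ra=\la\Gamma(s),Y^n(s)\ra$, this produces
\begin{align*}
\|Y^n(t)\|_H^2=&\ \|P_n\varphi\|_H^2+2\int_0^t\la\Gamma(s),Y^n(s)\ra\,ds+2\int_0^t\big(P_nZ(s),Y^n(s)\big)_H\,dW(s)+\int_0^t\|P_nZ(s)\|_H^2\,ds\\
&+\int_0^t\!\!\int_E\big[\|P_nR(s,e)\|_H^2+2\big(Y^n(s-),P_nR(s,e)\big)_H\big]\tilde\mu(de,ds)+\int_0^t\!\!\int_E\|P_nR(s,e)\|_H^2\,\nu(de)\,ds.
\end{align*}
Letting $n\to\infty$: the initial term converges since $P_n\varphi\to\varphi$ in $H$; the terms $\int_0^t\|P_nZ\|_H^2\,ds$ and $\int_0^t\int_E\|P_nR\|_H^2\,\nu\,ds$ converge by dominated convergence (dominated by $\int_0^T\|Z\|_H^2\,ds$ and $\int_0^T\int_E\|R\|_H^2\,\nu\,ds$); the two stochastic integrals converge in probability, uniformly on $[0,T]$, by the It\^o isometry and the Burkholder--Davis--Gundy inequality applied to their differences, using the same domination; and $\int_0^t\la\Gamma(s),Y^n(s)\ra\,ds\to\int_0^t\la\Gamma(s),Y(s)\ra\,ds$ because $\la\Gamma(s),Y^n(s)\ra\to\la\Gamma(s),Y(s)\ra$ a.e.\ (from $P_n\to I$ strongly on $V$) with $|\la\Gamma(s),Y^n(s)\ra|\le(\sup_n\|P_n\|_{\mathscr L(V)})\|\Gamma(s)\|_{V^*}\|Y(s)\|_V\in L^1$. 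This yields \eqref{eq:6.2} for a.e.\ $(t,\omega)$, and then for all $t$ once c\`adl\`ag versions are chosen.

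For the regularity assertions, I would run the same computation on $\|Y^n(t)-Y^m(t)\|_H^2$ and apply the Burkholder--Davis--Gundy inequality to the martingale parts to see that $\{Y^n\}$ is Cauchy in $S^2_{\mathscr F}(0,T;H)$; its limit is an $H$-valued, strongly c\`adl\`ag, predictable process coinciding with $Y$ for a.e.\ $(t,\omega)$, i.e.\ the desired modification, and $\mathbb E[\sup_{0\le t\le T}\|Y(t)\|_H^2]<\infty$ then follows by taking expectations in \eqref{eq:6.2} and bounding $\int_0^t\la\Gamma,Y\ra\,ds\le\int_0^T\|\Gamma\|_{V^*}\|Y\|_V\,ds$. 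I expect the main obstacle to be precisely the passage to the limit in $\int_0^t\la\Gamma(s),Y^n(s)\ra\,ds$: since $\Gamma$ only belongs to $M^2_{\mathscr F}(0,T;V^*)$ while $Y$ only belongs to $M^2_{\mathscr F}(0,T;V)$, one genuinely needs $Y^n\to Y$ \emph{strongly in $M^2_{\mathscr F}(0,T;V)$}, which is exactly what forces the Galerkin basis to be compatible with the triple (equivalently, forces the time-mollification route of \cite{gyongy1982stochastics}); by contrast, the only effect of the compensated Poisson measure is the appearance of the extra terms $\int_0^t\int_E[\|R\|_H^2+2(Y(s-),R)]\tilde\mu(de,ds)$ and $\int_0^t\int_E\|R\|_H^2\,\nu(de)\,ds$ in \eqref{eq:6.2}, which is routine.
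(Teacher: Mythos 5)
The paper does not actually prove this lemma: it is stated as a recalled result with the proof delegated to \cite{gyongy1982stochastics}, so there is no internal argument to measure yours against. Your Galerkin sketch is the natural first attack, and most of it is sound: the finite-dimensional It\^o computation is correct (including the conversion of the $\mu$-integral of $2(Y^n(s-),P_nR(s,e))_H+\|P_nR(s,e)\|_H^2$ into the $\tilde\mu$-integral plus the compensator $\int_0^t\int_E\|P_nR(s,e)\|_H^2\,\nu(de)\,ds$), and the limits of the initial datum, the quadratic terms and the two stochastic integrals are routine, modulo a localization you gloss over: $\|Y\|_H^2\|Z\|_H^2$ is not a priori $dt\times d\mathbb P$-integrable, so the dominated-convergence step for the Brownian and Poisson integrals needs stopping times, or must be run after the $S^2$-Cauchy estimate rather than before it. The genuine gap is the one you flag yourself: for a general Gelfand triple there need not exist an $H$-orthonormal basis contained in $V$ whose $H$-orthogonal projections $P_n$ are uniformly bounded in $\mathscr L(V)$ and converge strongly on $V$, and without that bound the term $\int_0^t\langle\Gamma(s),Y^n(s)\rangle\,ds$ cannot be dominated, so your argument proves the lemma only under this additional structural hypothesis on the triple. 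The reference the paper relies on avoids spatial projections entirely: Gy\"ongy and Krylov discretize in time, write $\|Y(t_{i+1})\|_H^2-\|Y(t_i)\|_H^2=2\big(Y(t_{i+1})-Y(t_i),Y(t_{i+1})\big)_H-\|Y(t_{i+1})-Y(t_i)\|_H^2$, use the weak equation tested against the fixed $V$-valued random elements $Y(t_i)$, and pass to the limit along refining partitions; the duality term is then only ever paired with elements of $V$ that do not depend on an approximation parameter, so no uniform projection bound is required. Your route does close in the concrete triple $(H^1,L^2,H^{-1})$ actually used later in the paper, where suitable bases exist (e.g.\ a sufficiently regular orthonormal wavelet basis, which is unconditional in $H^1$), but for the lemma in the stated generality you must, as you concede, fall back on the time-discretization argument of \cite{gyongy1982stochastics}.
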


Now we introduce a backward stochastic evolution equation with jumps (BSEEJ) in the Gelfand triple $(V, H, V^*)$ of  the following form:
\begin{eqnarray}\label{bseej}
\left\{
\begin{aligned}
d Y(t) =& \ \big [ A (t) Y (t) +B(t)Z(t)+ F ( t, Y (t), Z (t), R(t, \cdot) ) \big ] d t \\
&+ Z (t) d W (t) +  \int_E R(t,e)\tilde \mu(de,dt) ,  \quad t \in [0, T] , \\
Y(T) =& \ \xi ,
\end{aligned}
\right.
\end{eqnarray}
where the coefficients $( A, B, F, \xi )$ are given mappings such that
$A: [ 0, T ] \times \Omega \rightarrow {\mathscr L} ( V, V^* )$ is
${\mathscr P} / {\mathscr B} ( {\mathscr L} ( V, V^* ) )$-measurable; $B: [ 0, T ] \times \Omega \rightarrow {\mathscr L} ( H, V^* )$ is
${\mathscr P} / {\mathscr B} ( {\mathscr L} ( H, V^* ) )$-measurable;
$F: [0, T] \times \Omega \times V \times H \times M^{\nu,2}(E;H) \rightarrow H$
is ${\mathscr P} \otimes {\mathscr B} (V) \otimes\mathscr B(H)\otimes {\mathscr B}(M^{\nu,2}(E;H)) / {\mathscr B} (H)$-measurable; $\xi: \Omega \rightarrow H$ is ${\mathscr{F}}_T$-measurable. Furthermore, we assume that the coefficients $(A, B, F, \xi)$
satisfy the following conditions:

\begin{ass}\label{ass:6.1}
\begin{enumerate}
\item[]
\item[(i)] $F ( \cdot, 0, 0,0 ) \in M_{\mathscr F}^2 ( 0, T; H )$ and
$\xi \in L^2(\Omega,{\mathscr F}_T,P;H)$;
\item[(ii)] the operators $A$ and $B$ satisfy the super-parabolic condition, i.e., there exist constants $\alpha >0$
and $\lambda $ such that
\begin{eqnarray}\label {eq:2.9}
2\left < A (t) \phi, \phi \right > + \lambda \| \phi \|^2_H \geq \alpha \| \phi \|^2_V +||B^*\phi||^2_H ,
\quad \forall t \in [ 0, T ] , \quad \forall \phi \in V ;
\end{eqnarray}
\item[(iii)] the operators $A$ and $B$ are uniformly bounded, i.e., there exists a constant $C >0$ such that
\begin{eqnarray} \label{eq:6.5}
&\sup\limits_{( t, \omega )\in [ 0, T ] \times \Omega} \| A ( t, \omega ) \|_{{\mathscr L} ( V, V^* )}
+\sup\limits_{( t, \omega )\in [ 0, T ] \times \Omega} \| B( t, \omega ) \|_{{\mathscr L} ( H, V^* )} \leq C ;
\end{eqnarray}
\item[(iv)] $F$ is uniformly Lipschitz continuous in $( y, z,  r )$, i.e., there exists
a constant $C > 0$ such that for all $( y, z, r ), ( {\bar y}, {\bar z}, {\bar r}) \in V \times H  \times M^{\nu,2 }(E;H)$
and a.e. $( t, \omega ) \in {\cal T} \times \Omega$,
\begin{equation}\label{eq:2.18}
\| F ( t, y, z,r ) - F ( t, {\bar y}, {\bar z}, {\bar r} ) \|^2_H
\leq C ( \| y - {\bar y} \|^2_V + \| z- {\bar z} \|^2_H + \| r - {\bar r} \|^2_{M^{\nu,2}(E;H)}) .
\end{equation}
\end{enumerate}
\end{ass}

For any set of $(A,B, F,\xi)$ satisfying Assumption \ref{ass:6.1}, we call it a
generator of the BSEEJ \eqref{bseej}.

\begin{defn}\label{defn:6.1}
A $V \times H \times M^{\nu,2}(E;H)$-valued, ${\mathbb F}$-predictable process $( Y (\cdot), Z (\cdot), R (\cdot, \cdot) )$
is called a solution to the BSEEJ \eqref{bseej}, if $( Y (\cdot), Z (\cdot), R (\cdot, \cdot) ) \in
M_{\mathscr F}^2 ( 0, T; V ) \times M_{\mathscr F}^2 ( 0, T; H ) \times M_{\mathscr F}^{\nu, 2}(0,T; H)$ and, for every $\phi \in V$,  it holds that
\begin{eqnarray}\label{eq:c5}
( Y (t),  \phi )_H  &=& (\xi, \phi)_H
- \int_t^T \Big\langle  A (s) Y (s) +B(s)Z(s)+F ( s, Y (s), Z (s), R(s, \cdot) ), \phi \Big\rangle d t \nonumber \\
&& - \int_t^T ( Z (s), \phi )_H d W (s) -\int_t^T\int_E (R(s,e), \phi)_H\tilde \mu(ds,de), \quad \text{for a.e.
$t \in {\cal T}$, a.s.,}
\end{eqnarray}
or alternatively, $( Y (\cdot), Z (\cdot), R (\cdot, \cdot) )$ satisfies the following It\^{o}'s equation in $V^*$:
\begin{eqnarray}
Y(t)&=& \xi -\int_t^T \big[A (s) Y (s) d s+ B (s) Z (s)+F ( t, {Y} (s), {Z} (s),  R(s,\cdot) ) \big] d s \nonumber \\
&&- \int_t^T Z (s) d W (s)-\int_t^T \int_E R (s,e) d \tilde \mu(ds,de) , \quad t \in {\cal T} .
\end{eqnarray}
\end{defn}

\begin{thm}[{\bf Continuous Dependence Theorem}]\label{lem:1.4}
If $( Y (\cdot), Z (\cdot), R (\cdot, \cdot) )$ is the solution to the BSEEJ \eqref{bseej}
corresponding to the generator $( A, B, F, \xi )$, then the following estimate holds:
\begin{eqnarray}\label{eq:2.15}
&& {\mathbb E} \bigg [ \sup_{0 \leq t \leq T} \| Y (t) \|^2_H \bigg ]
+ {\mathbb E} \bigg [ \int_0^T \| Y (t) \|_V^2 d t \bigg ]
+ {\mathbb E} \bigg [ \int_0^T \| Z (t) \|^2_H d t \bigg ]+ {\mathbb E} \bigg [ \int_0^T \int_E \| R (t,e) \|^2_H  \nu (de)dt \bigg ] \nonumber \\
&& \qquad\qquad\qquad\qquad\ \leq K \bigg \{ {\mathbb E} [ \| \xi \|^2_H ] + {\mathbb E} \bigg
[ \int_0^T \| F ( t, 0, 0, 0 ) \|^2_H d t \bigg ] \bigg \} ,
\end{eqnarray}
where $K \triangleq K ( T, C, \alpha, \lambda )$ is a positive constant depending only on $T$, $C$, $\alpha$, and $\lambda$.
Moreover, if $( {\bar Y} (\cdot), {\bar Z} (\cdot), {\bar R} (\cdot, \cdot) )$ is a solution to the
BSEEJ \eqref{bseej} corresponding to another generator $(A, B, {\bar F}, {\bar \xi})$,
then we have
\begin{eqnarray}\label{eq:6.10}
&& {\mathbb E} \bigg [ \sup_{0 \leq t \leq T} \| Y (t) - {\bar Y} (t) \|_H^2 \bigg ]
+ {\mathbb E} \bigg [ \int_0^T \| Y (t) - {\bar Y} (t) \|_V^2 d t \bigg ]+
 {\mathbb E} \bigg [ \int_0^T \| Z (t) - {\bar Z} (t) \|^2_H d t\bigg] \nonumber \\
&&\quad \quad+ {\mathbb E} \bigg [ \int_0^T \int_E\| R (t,e) - {\bar R} (t,e) \|^2_H \nu (de)d t \bigg ] \\
&& \leq K \bigg \{ {\mathbb E} [ \| \xi - {\bar \xi} \|_H^2 ]
+ {\mathbb E} \bigg [ \int_0^T \| F ( t, \bar Y (t), \bar Z (t), \bar R(t, \cdot) ) - \bar F ( t, \bar Y (t), \bar Z (t),
\bar R(t, \cdot) ) \|^2_H d t \bigg ] \bigg \} . \nonumber
\end{eqnarray}
\end{thm}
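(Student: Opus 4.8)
The plan is to establish the a priori estimate \eqref{eq:2.15} first, via the It\^o formula of Lemma \ref{lem:c1} applied to $t\mapsto\|Y(t)\|_H^2$ together with a careful absorption of terms using the super-parabolic condition \eqref{eq:2.9}, and then to deduce the continuous dependence estimate \eqref{eq:6.10} by applying \eqref{eq:2.15} to the difference of the two equations. Since $(Y,Z,R)$ is a solution, it lies in $M_{\mathscr F}^2(0,T;V)\times M_{\mathscr F}^2(0,T;H)\times M_{\mathscr F}^{\nu,2}(0,T;H)$ and, by Assumption \ref{ass:6.1}, the drift $A(\cdot)Y+B(\cdot)Z+F(\cdot,Y,Z,R)$ belongs to $M_{\mathscr F}^2(0,T;V^*)$, so Lemma \ref{lem:c1} is applicable; integrating \eqref{eq:6.2} between $t$ and $T$ gives
\begin{align*}
\|Y(t)\|_H^2+\int_t^T\!\|Z(s)\|_H^2ds+\int_t^T\!\!\int_E\!\|R(s,e)\|_H^2\,\nu(de)\,ds
=&\ \|\xi\|_H^2-2\int_t^T\!\langle A(s)Y(s),Y(s)\rangle ds-2\int_t^T\!(Z(s),B^*(s)Y(s))_H\,ds\\
&-2\int_t^T\!(F(s,Y,Z,R),Y(s))_H\,ds-\mathscr M_t,
\end{align*}
where $\mathscr M_t$ collects the Brownian and compensated Poisson stochastic integrals and has zero mean.

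Next I would absorb the right-hand side. By \eqref{eq:2.9}, $-2\langle A(s)Y,Y\rangle\le\lambda\|Y\|_H^2-\alpha\|Y\|_V^2-\|B^*Y\|_H^2$. The cross term is split by Young's inequality with a small parameter $\theta\in(0,1)$ as $-2(Z,B^*Y)_H\le(1-\theta)\|Z\|_H^2+(1-\theta)^{-1}\|B^*Y\|_H^2$; using $\|B^*Y\|_H\le C\|Y\|_V$ (Assumption \ref{ass:6.1}(iii)) and choosing $\theta$ so small that $\big((1-\theta)^{-1}-1\big)C^2<\alpha/4$, the surplus $\|B^*Y\|_H^2$ is absorbed into the coercive term while a genuine fraction $\theta\|Z\|_H^2$ of the quadratic variation survives on the left. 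The generator term is handled by Assumption \ref{ass:6.1}(iv), $\|F(s,Y,Z,R)\|_H\le\|F(s,0,0,0)\|_H+C(\|Y\|_V+\|Z\|_H+\|R\|_{M^{\nu,2}})$, so that $2\|F\|_H\|Y\|_H$ can be split to absorb small multiples of $\|Y\|_V^2$, $\|Z\|_H^2$ and $\|R\|_{M^{\nu,2}}^2$, leaving only $\|F(s,0,0,0)\|_H^2$ and a multiple of $\|Y\|_H^2$. Taking expectations kills $\mathscr M_t$, and Gronwall's inequality (backward in time) bounds $\sup_t\mathbb E\|Y(t)\|_H^2$ and then $\mathbb E\int_0^T(\|Y\|_V^2+\|Z\|_H^2+\|R\|_{M^{\nu,2}}^2)ds$ by the right-hand side of \eqref{eq:2.15}. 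For the $\sup_t$-norm I would return to the pathwise identity, take $\sup_{t\le s\le T}$ and then expectations, estimate the Brownian part and the part $\int\!\int_E 2(Y_{s-},R)_H\,\tilde\mu$ of the Poisson martingale by the Burkholder--Davis--Gundy inequality (picking up $\tfrac14\mathbb E\sup_s\|Y(s)\|_H^2$, which is absorbed), and control the remaining piece $\int\!\int_E\|R\|_H^2\,\tilde\mu$ directly, since its supremum is dominated by $\int_0^T\!\int_E\|R\|_H^2\,\mu+\int_0^T\!\int_E\|R\|_H^2\,\nu\,ds$, whose expectation equals $2\mathbb E\int_0^T\!\int_E\|R\|_H^2\,\nu\,ds$ and has already been bounded. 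This completes \eqref{eq:2.15}.

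For \eqref{eq:6.10}, note that $(Y-\bar Y,\,Z-\bar Z,\,R-\bar R)$ solves a BSEEJ of the form \eqref{bseej} with the \emph{same} operators $A,B$, terminal value $\xi-\bar\xi$, and generator $\widetilde F(s,y,z,r):=F(s,\bar Y+y,\bar Z+z,\bar R+r)-F(s,\bar Y,\bar Z,\bar R)+\big(F(s,\bar Y,\bar Z,\bar R)-\bar F(s,\bar Y,\bar Z,\bar R)\big)$. By Assumption \ref{ass:6.1}(iv) the map $\widetilde F$ is uniformly Lipschitz in $(y,z,r)$, and $\widetilde F(\cdot,0,0,0)=F(\cdot,\bar Y,\bar Z,\bar R)-\bar F(\cdot,\bar Y,\bar Z,\bar R)\in M_{\mathscr F}^2(0,T;H)$ because $(\bar Y,\bar Z,\bar R)$ lies in the solution spaces and $F,\bar F$ grow linearly; hence $(A,B,\widetilde F,\xi-\bar\xi)$ satisfies Assumption \ref{ass:6.1}, and applying the already-proved estimate \eqref{eq:2.15} to this difference equation yields \eqref{eq:6.10}. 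Taking $F=\bar F$ and $\xi=\bar\xi$ there gives uniqueness of solutions as a byproduct.

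The main obstacle is the absorption in the second paragraph: the quadratic variation contributes $\int_t^T\|Z\|_H^2$ with unit weight only, while the cross term $2(Z,B^*Y)$ can consume up to twice as much, so it cannot be bounded crudely. It is precisely the super-parabolic gain $\|B^*Y\|_H^2$ in \eqref{eq:2.9}, together with the bound $\|B^*Y\|_H\le C\|Y\|_V$, that allows a positive fraction of $\int_t^T\|Z\|_H^2$ to be retained; balancing the parameter $\theta$ against $\alpha$ and the Lipschitz constant of $F$ so that the $\|Y\|_V^2$, $\|Z\|_H^2$ and $\|R\|_{M^{\nu,2}}^2$ contributions all close simultaneously is the delicate point, and the remaining steps (Gronwall, BDG) are routine.
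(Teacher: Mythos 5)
Your proposal is correct and follows essentially the same route as the paper: It\^o's formula from Lemma \ref{lem:c1} for the squared $H$-norm, absorption of the cross term $-2( Z, B^*Y)_H$ via Young's inequality together with the super-parabolic condition \eqref{eq:2.9}, then Gronwall and Burkholder--Davis--Gundy for the supremum norm. The only (harmless) difference is the order of the two estimates: the paper runs the It\^o computation directly on the difference $Y-\bar Y$ to obtain \eqref{eq:6.10} and recovers \eqref{eq:2.15} by taking $(\bar F,\bar\xi)=(0,0)$, whereas you prove \eqref{eq:2.15} first and deduce \eqref{eq:6.10} by viewing the difference as a solution with the shifted generator $\widetilde F$ --- both reductions are valid and yield the same constant $K(T,C,\alpha,\lambda)$ since $\widetilde F$ has the same Lipschitz constant as $F$.
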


\begin{proof}
If we take the generator $( A, B, {\bar F}, {\bar \xi} ) = (A, B, 0, 0)$,
then the corresponding solution to the BSEEJ \eqref{bseej} is $(\bar Y(\cdot),\bar Z(\cdot), \bar R(\cdot, \cdot)) = (0, 0, 0)$. Hence, the estimate \eqref{eq:2.15} follows from  \eqref{eq:6.10} immediately.
Therefore, it suffices to prove \eqref{eq:6.10}. To simplify our notations, we denote by
\begin{eqnarray*}
& {\hat Y} (t) \triangleq Y (t) - {\bar Y} (t) , \quad
{\hat Z} (t) \triangleq Z (t) - {\bar Z} (t) , \quad
{\hat R} (t,e) \triangleq R (t,e) - {\bar R} (t,e), \quad
{\hat \xi} \triangleq \xi - {\bar \xi} , \\
& {\hat F} (t) \triangleq F ( t, {\bar Y} (t), {\bar Z} (t), \bar R(t,\cdot))- {\bar F} (t, {\bar Y} (t), {\bar Z} (t), \bar R(t,\cdot)) , \quad
{\tilde F} (t) \triangleq F ( t, {Y} (t), {Z} (t), R(t,\cdot))- {\bar F} (t, {\bar Y} (t), {\bar Z} (t), \bar R(t,\cdot)) .
\end{eqnarray*}
From Lemma  \ref{lem:c1}, we obtain
\begin{equation}\label{Ito_Y}
\begin{split}
||\hat Y (t)||_H^2 =&||\hat \xi||^2- 2\int_t^T\langle A(s)\hat Y(s)+B(s)\hat Z(s)+\tilde F(s)+\hat F(s), \hat Y(s) \rangle
ds -2\int_t^T ( \hat Z(s), \hat Y(s) )_H dW(s) \nonumber \\
&- \int_t^T||\hat Z(s)||_H^2ds-\int_t^T\int_E\big[ ||\hat R(s,e)||_H^2+ 2 (\hat Y(s), \hat R(s,e))_H \big]
\tilde \mu(de,ds)-\int_t^T\int_E ||\hat R(s,e)||_H^2\nu(de)ds.
\end{split}
\end{equation}
Using the inequality $2
a b \leq \frac{1}{\varepsilon} a^2 + \varepsilon b^2$, $\forall a, b > 0$, $\varepsilon > 0$, we have
$$
-2\langle B(s)\hat Z(s),\hat Y(s)\rangle=-2\langle \hat Z(s),B^*\hat Y(s)\rangle_H\le \frac{1}{1+\varepsilon_1}\|Z(s)\|_H^2+(1+\varepsilon_1)\|B^*(s)\hat Y(s)\|^2,
$$
$$
-2\langle \tilde F(s),\hat Y(s)\rangle \le \varepsilon_2 \|\tilde F(s)\|^2_H+\frac{1}{\varepsilon_2} \|Y(s)\|_H^2\le C\varepsilon_1(\|\hat Y(s)\|_V^2+\|\hat Z(s)\|^2_H+\|\hat R(s,\cdot)\|^2_{M^{\nu,2}(E;H)})+\frac{1}{\varepsilon_2} \|Y(s)\|_H^2,
$$
and
$$
-2\langle \hat F(s),\hat Y(s)\rangle \le \|\hat Y(s)\|_H^2+\|\hat F(s)\|_H^2.
$$
Then, it implies that
\begin{align*}
||\hat Y (t)||_H^2 \le & \ ||\hat \xi||^2+\int_t^T \bigg(-2\langle A(s)\hat Y(s),\hat Y(s)\rangle+(1+\varepsilon_1)\|B^*(s)Y(s)\|_H^2+C\varepsilon_2 \|\hat Y(s)\|_V^2+(\frac{1}{\varepsilon_2} +1)\|\hat Y(s)\|_H^2\\
&+\|\hat F(s)\|_H^2-(1-\frac{1}{1+\varepsilon_1}-C\varepsilon_2)\|\hat Z(s)\|_H^2-(1-C\varepsilon_2) \|\hat R(s,\cdot)\|^2_{M^{\nu,2}(E;H)}\bigg)ds\\
&-2\int_t^T ( \hat Z(s), \hat Y(s) )_H dW(s)-\int_t^T\int_E\big[ ||\hat R(s,e)||_H^2+ 2 (\hat Y(s), \hat R(s,e))_H \big]
\tilde \mu(de,ds).
\end{align*}
From Assumption \ref{ass:6.1}, we have that
\begin{align*}
-2\langle A(s)\hat Y(s),\hat Y(s)\rangle+(1+\varepsilon_1)\|B^*(s)Y(s)\|_H^2\le& \ 2\varepsilon_1\langle A(s)\hat Y(s),\hat Y(s)\rangle+\lambda (1+\varepsilon_1) \|\hat Y(s)\|_H^2-\alpha (1+\varepsilon_1)\|\hat Y(s)\|_V^2\\
\le& \ (\lambda(1+\varepsilon_1))\|\hat Y(s)\|_H^2- (\alpha+\alpha\varepsilon_1-2C\varepsilon_1)\|\hat Y(s)\|_V^2.
\end{align*}
Hence, we get
\begin{align}
&||\hat Y (t)||_H^2+\int_t^T \bigg((1-\frac{1}{1+\varepsilon_1}-C\varepsilon_2)\|\hat Z(s)\|_H^2+(1-C\varepsilon_2) \|\hat R(s,\cdot)\|^2_{M^{\nu,2}(E;H)}\bigg)ds\nonumber\\
&\le \int_t^T\bigg((1+\frac{1}{\varepsilon_2}+\lambda(1+\varepsilon_1))\|\hat Y(s)\|_H^2-(\alpha+\alpha\varepsilon_1-2C\varepsilon_1-C\varepsilon_2)\|\hat Y(s)\|_V^2+\|\hat F(s) \|_H^2\bigg)ds\nonumber\\
&\quad+2\int_t^T ( \hat Z(s), \hat Y(s) )_H dW(s)+\int_t^T\int_E\big[ ||\hat R(s,e)||_H^2+ 2 (\hat Y(s), \hat R(s,e))_H \big]
\tilde \mu(de,ds).
\end{align}
From the integrability condition of the solution in Definition  \ref{defn:6.1},  we know that $\int_{0}^{\cdot} ( u(s), v^k(s)) d W^k_{s}$ is a uniformly integrable martingale.
Moreover, $\int_0^{\cdot}\int_E [ ||\hat R(s,e)||_H^2+ 2 (\hat Y(s), \hat R(s,e))_H ] \tilde \mu(de,ds)$ is also a
uniformly integrable martingale. Taking expectations on both sides, we have
\begin{align*}
&\mathbb E\left[ ||\hat Y (t)||_H^2+\int_t^T \bigg( (1-\frac{1}{1+\varepsilon_1}-C\varepsilon_2)\|\hat Z(s)\|_H^2+(1-C\varepsilon_2) \|\hat R(s,\cdot)\|^2_{M^{\nu,2}(E;H)} \bigg) ds \right]\\
&\le \mathbb E \left[ \int_t^T \bigg((1+\frac{1}{\varepsilon_2}+\lambda(1+\varepsilon_1))\|\hat Y(s)\|_H^2-(\alpha+\alpha\varepsilon_1-2C\varepsilon_1-C\varepsilon_2)\|\hat Y(s)\|_V^2+\|\hat F(s) \|_H^2\bigg) ds \right].
\end{align*}
Choosing sufficiently small $\varepsilon_1$ and $\varepsilon_2$ such that $1-\frac{1}{1+\varepsilon_1}-C\varepsilon_2 >0, 1-C\varepsilon_2>0$, and $\alpha+\alpha \varepsilon_1-2C\varepsilon_1-C\varepsilon_2>0$, we finally get that
\begin{eqnarray}\label{leq:c4}
&& {\mathbb E} [ \| {\hat Y} (t) \|_H^2 ]
+ {\mathbb E} \bigg [ \int_t^T \| {\hat Y} (s) \|_V^2 d s \bigg ]
+ {\mathbb E} \bigg [ \int_t^T \| {\hat Z} (s) \|_H^2 d s \bigg ]+{\mathbb E} \bigg [ \int_t^T
\int_E\| {\hat R} (s,e) \|_H^2 \nu(de)d s \bigg ]  \nonumber \\
&& \leq  K(\lambda, \alpha)  \bigg \{ \mathbb E [ \| {\hat \xi} \|_H^2 ]
+ \mathbb E \bigg [ \int_t^T \| {\hat F} (s) \|_H^2 d s \bigg ]
+ \mathbb E \bigg [ \int_t^T \| {\hat Y} (s) \|_H^2 d s \bigg ] \bigg \} .
\end{eqnarray}
Then using Gr\"onwall's inequality to \eqref{leq:c4} gives
\begin{eqnarray}\label{leq:c5}
&& \sup_{0 \leq t \leq T} {\mathbb E} [ \| {\hat Y} (t) \|_H^2 ]
+ {\mathbb E} \bigg [ \int_0^T \| {\hat Y} (t) \|_V^2 d t \bigg ]
+ {\mathbb E} \bigg [ \int_0^T \| {\hat Z} (t) \|_{H}^{2} d t \bigg ]
+{\mathbb E} \bigg [ \int_0^T \int_E\| {\hat R} (s,e) \|_H^2 \nu(de)d s \bigg ]   \nonumber \\
&& \leq  K(\alpha, \lambda ) \bigg \{ {\mathbb E} [ \| {\hat \xi} \|_H^2 ]
+ {\mathbb E} \bigg [ \int_0^T \| {\hat F} (t) \|_H^2 d t \bigg ] \bigg \} .
\end{eqnarray}
Using \eqref{Ito_Y} and the Burkholder-Davis-Gundy inequality
yields
\begin{align}\label{eq:2.29}
{\mathbb E} \bigg [ \sup_{0 \leq t \leq T} \| {\hat Y} (t) \|_H^2 \bigg ]
\leq& \  K(\alpha, \lambda ) \bigg \{ {\mathbb E} [ \| {\hat \xi} \|_H^2 ]
+ {\mathbb E} \bigg [ \int_0^T \| {\hat Y} (t) \|_H^2 d t \bigg ]
+ {\mathbb E} \bigg [ \int_0^T \| {\hat F} (t) \|_H^2 d t \bigg ] \bigg \} \nonumber \\
&+ 2 {\mathbb E} \bigg [ \sup_{0 \leq t \leq T} \bigg | \int_t^T (
{\hat Y} (s), {\hat Z} (s) )_H d W (s) \bigg | \bigg ] \nonumber
\\ & + {\mathbb E} \bigg [ \sup_{0 \leq t \leq T} \bigg | \int_t^T\int_E\big( ||\hat R(s,e)||_H^2
+ 2 (\hat Y(s), \hat R(s,e))_H   \big) \tilde \mu(de,ds) \bigg | \bigg ] \nonumber \\
\leq & \ K(\alpha, \lambda ) \bigg \{ {\mathbb E} [ \| {\hat \xi} \|_H^2 ] + {\mathbb
E} \bigg [ \int_0^T \| {\hat Y} (t) \|_V^2 d t \bigg ] + {\mathbb E}
\bigg [ \int_0^T \| {\hat F} (t) \|_H^2 d t \bigg ] \bigg \} \nonumber \\
& + \frac{1}{2} {\mathbb E} \bigg [ \sup_{0 \leq t \leq
T} \| {\hat Y} (t) \|_H^2 \bigg ] + K {\mathbb E} \bigg [ \int_0^T
\| {\hat Z} (t) \|_H^2 d t \bigg ]+K{\mathbb E} \bigg [ \int_t^T
\int_E\| {\hat R} (s,e) \|_H^2 \nu(de)d s \bigg ],
\end{align}
where the last inequality is obtained due to the fact that
\begin{equation*}
\begin{split}
{\mathbb E} \left [ \sup_{0 \leq t \leq T} \bigg | \int_t^T (
{\hat Y} (s), {\hat Z} (s) )_H d W (s)\bigg| \right ] &\le C\mathbb E\left[   \left( \int_0^T | (
{\hat Y} (s), {\hat Z} (s) )_H|^2ds \right)^{1/2}\right]\\
&\le C\mathbb E\left[\sup_{0 \leq s \leq T} \|\hat Y(s)\|_H \left(\int_0^T |Z(s)|_H^2ds  \right)^{1/2}  \right] \\
&\le C \mathbb E\left[\varepsilon \sup_{0 \leq s \leq T} \|\hat Y(s)\|^2_H+\frac{1}{\varepsilon} \int_0^T |Z(s)|_H^2ds   \right]
\end{split}
\end{equation*}
and
\begin{equation*}
\begin{split}
&{\mathbb E} \bigg [ \sup_{0 \leq t \leq T} \bigg | \int_t^T\int_E\big( ||\hat R(s,e)||_H^2
+ 2 (\hat Y(s), \hat R(s,e))_H \big) \tilde \mu(de,ds) \bigg | \bigg ] \\
&\le C \mathbb E \left[\int_0^T\int_E \bigg|||\hat R(s,e)||_H^2
+ 2 (\hat Y(s), \hat R(s,e))_H \bigg|   \nu(de)ds\right]\\
&\le C\mathbb E\left[ \varepsilon \sup_{0 \leq s \leq T} \|\hat Y(s)\|_H^2 +(1+\frac{1}{\varepsilon})\int_0^T \int_E \|R(s,e)\|_H^2\nu(de)ds \right] .
\end{split}
\end{equation*}
 Combining \eqref{eq:2.29} with \eqref{leq:c5} gives the desired result \eqref{eq:6.10}.
\end{proof}

\begin{thm}[{\bf Existence and uniqueness theorem of BSEEJ}]\label{thm:6.1}
Given a generator $(A, B, F, \xi)$ satisfying Assumption \ref{ass:6.1},
the BSEEJ \eqref{bseej} has a unique solution $( Y (\cdot), Z (\cdot), R (\cdot, \cdot) )
\in S_{\mathscr F}^2 ( 0, T; V ) \times M_{\mathscr F}^2 ( 0, T; H )
\times M_{\mathscr F}^{\nu, 2}(0,T; H)$.
\end{thm}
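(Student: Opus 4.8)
The plan is to read off uniqueness from the Continuous Dependence Theorem~\ref{lem:1.4}, and to obtain existence in two stages: first for a generator whose forcing term does not depend on the unknown triplet, by a Galerkin (finite-dimensional projection) scheme, and then for the general Lipschitz generator by a contraction argument erected on the first stage.

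\emph{Uniqueness.} If $(Y,Z,R)$ and $(\bar Y,\bar Z,\bar R)$ are two solutions of \eqref{bseej} associated with the \emph{same} generator $(A,B,F,\xi)$, then applying estimate \eqref{eq:6.10} with $(A,B,\bar F,\bar\xi)=(A,B,F,\xi)$ makes its right-hand side vanish, so the two triplets agree in $M_{\mathscr F}^2(0,T;V)\times M_{\mathscr F}^2(0,T;H)\times M_{\mathscr F}^{\nu,2}(0,T;H)$, and the accompanying $\sup$-in-time bound forces the two c\`adl\`ag versions of $Y$ to coincide.

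\emph{Existence with frozen forcing.} Suppose first $F(t,y,z,r)\equiv F_0(t)$ with $F_0\in M_{\mathscr F}^2(0,T;H)$. Fix an orthonormal basis $\{e_i\}_{i\ge 1}$ of $H$ contained in $V$ (e.g.\ eigenvectors of the Riesz isomorphism $V\to V^*$), let $V_n:=\mathrm{span}\{e_1,\dots,e_n\}$, and seek $Y_n(t)=\sum_{i=1}^n y_n^i(t)e_i$ together with $Z_n,R_n$ solving the system obtained by testing \eqref{bseej} against $e_1,\dots,e_n$. This is a finite-dimensional linear BSDE driven by $W$ and $\tilde\mu$ with affine, hence Lipschitz, coefficients and square-integrable terminal data, so it admits a unique adapted solution by the standard theory of BSDEs with jumps (e.g.\ \cite{tang1994necessary}). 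Repeating on the projected system the energy computation behind Theorem~\ref{lem:1.4} — using the super-parabolicity \eqref{eq:2.9} to absorb the $B(s)Z_n(s)$ term against the $\|B^*\phi\|_H^2$ contribution, and Burkholder--Davis--Gundy for the $\sup$-norm — yields a bound
\begin{equation*}
\mathbb E\Big[\sup_{0\le t\le T}\|Y_n(t)\|_H^2\Big]+\mathbb E\!\int_0^T\!\|Y_n(t)\|_V^2dt+\mathbb E\!\int_0^T\!\|Z_n(t)\|_H^2dt+\mathbb E\!\int_0^T\!\!\int_E\|R_n(t,e)\|_H^2\nu(de)\,dt\le K\Big\{\mathbb E\|\xi\|_H^2+\mathbb E\!\int_0^T\!\|F_0(t)\|_H^2dt\Big\}
\end{equation*}
uniform in $n$. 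Extracting a subsequence, $Y_n\rightharpoonup Y$ in $M_{\mathscr F}^2(0,T;V)$, $Z_n\rightharpoonup Z$ in $M_{\mathscr F}^2(0,T;H)$, $R_n\rightharpoonup R$ in $M_{\mathscr F}^{\nu,2}(0,T;H)$; since $A,B$ are bounded linear, $A(\cdot)Y_n\rightharpoonup A(\cdot)Y$ and $B(\cdot)Z_n\rightharpoonup B(\cdot)Z$ in $M_{\mathscr F}^2(0,T;V^*)$, and the (linear) It\^o integrands converge weakly in the appropriate $L^2$ spaces, so passing to the limit in the projected identity tested against a fixed $e_j$ shows $(Y,Z,R)$ satisfies \eqref{eq:c5} for $\phi=e_j$, hence for all $\phi\in V$ by density. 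Finally Lemma~\ref{lem:c1} applies, giving the c\`adl\`ag version, the $\mathbb E[\sup_t\|Y\|_H^2]<\infty$ bound, and the It\^o formula \eqref{eq:6.2}, so $(Y,Z,R)$ lies in the spaces asserted in the theorem.

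\emph{Existence for general $F$ and main obstacle.} For the Lipschitz generator, define $\Gamma$ on $\mathcal H:=M_{\mathscr F}^2(0,T;V)\times M_{\mathscr F}^2(0,T;H)\times M_{\mathscr F}^{\nu,2}(0,T;H)$ by sending $(y,z,r)$ to the solution of \eqref{bseej} with frozen forcing $F(\cdot,y(\cdot),z(\cdot),r(\cdot,\cdot))\in M_{\mathscr F}^2(0,T;H)$, which is well-defined by the previous stage and Assumption~\ref{ass:6.1}(i),(iv). Applying \eqref{eq:6.10} to two such frozen-forcing generators and invoking the Lipschitz bound \eqref{eq:2.18} gives $\|\Gamma(y,z,r)-\Gamma(\bar y,\bar z,\bar r)\|_{\mathcal H}^2\le K\,\mathbb E\!\int_0^T(\|y-\bar y\|_V^2+\|z-\bar z\|_H^2+\|r-\bar r\|_{M^{\nu,2}(E;H)}^2)dt$; this becomes a strict contraction either on a sufficiently short terminal interval $[T-h,T]$ (then patched backward over finitely many such intervals) or after passing to the equivalent weighted norm with density $e^{\beta t}$ for large $\beta$. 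The Banach fixed-point theorem then yields the unique $(Y,Z,R)\in\mathcal H$ solving \eqref{bseej}, with the claimed regularity and the a priori bound \eqref{eq:2.15} inherited from the frozen-forcing stage and Theorem~\ref{lem:1.4}. The genuinely delicate point is the Galerkin limit: one must verify that the weak limits of the projected stochastic integrals are the stochastic integrals of the weak limits and that testing against the $e_j$ and letting $n\to\infty$ legitimately recovers \eqref{eq:c5}; linearity of $A(\cdot)Y$, $B(\cdot)Z$ and of the It\^o integrands in $(Y,Z,R)$ is exactly what makes weak convergence sufficient here (a nonlinear $A$ or $B$ would instead require a monotonicity/Minty-type argument), while the compatibility of the short-interval or weighted-norm device with the $\sup$-in-time estimate is routine via Burkholder--Davis--Gundy as in Theorem~\ref{lem:1.4}.
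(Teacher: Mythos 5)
Your uniqueness argument and your Galerkin construction for the frozen-forcing equation coincide with the paper's (they are exactly Theorem \ref{lem:1.4} and Lemma \ref{lem:6.4}). Where you genuinely diverge is the passage from frozen forcing to the full Lipschitz generator: you run a direct Picard/contraction scheme on the solution map $(y,z,r)\mapsto(Y,Z,R)$, made contractive by time-localization or an $e^{\beta t}$ weight, whereas the paper uses the method of continuation, interpolating via the parameter $\rho\in[0,1]$ in $\rho F+F_0$ and advancing $\rho$ in steps of size $\tfrac{1}{2\sqrt K}$; there the contraction constant is $K|\rho-\rho_0|^2$, which is small simply because the step is small, with no need to beat the Lipschitz constant of $F$. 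Your route does work, but with one caveat you should make explicit: the constant $K$ in \eqref{eq:6.10}, as stated, does not tend to zero as the time interval shrinks (both sides of your contraction inequality are integrals over the \emph{same} interval, so no factor of $h$ appears for free, and $F$ is Lipschitz in $y$ with respect to the $V$-norm, which is precisely what the coercivity produces on the left). Hence you cannot obtain the contraction by merely restricting \eqref{eq:6.10} to $[T-h,T]$; you must reopen the energy estimate behind Theorem \ref{lem:1.4} and re-balance the Young inequality on $2\langle\hat F,\hat Y\rangle$, placing a small $\varepsilon$ in front of $\|\hat F\|_H^2$ at the price of $\varepsilon^{-1}\|\hat Y\|_H^2$, the latter then absorbed either by Gr\"onwall over a short interval or by the $\beta\int e^{\beta s}\|\hat Y(s)\|_H^2\,ds$ term that the weight produces. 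With that re-balancing both of your devices succeed and yield the same conclusion; the continuation method is simply a more robust packaging of the same a priori estimate, since it never requires the Lipschitz constant of $F$ to be dominated.
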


To prove this result, we first consider a simple case when $F$ is independent of $(Y,Z,R)$. To be more precise,
we consider a BSEEJ with $F$ replaced by a $V^*$-valued ${\mathbb F}$-predictable process $F_0$ as follows:
\begin{eqnarray}\label{bseej_linear}
Y(t)=\xi-\int_t^T [A(s)Y(s)+B(s)Z(s)+F_0(s)]ds-\int_t^TZ(s)dW(s)
-\int_t^T\int_{E}R(s,e)\tilde \mu (de,ds).
\end{eqnarray}
Now we state the result of the existence and uniqueness of a solution to the BSEEJ \eqref{bseej_linear}.

\begin{lem} \label{lem:6.4}
Suppose that the coefficients $(A, B, F_0,\xi)$ satisfy Assumption \ref{ass:6.1}.
Then the BSEEJ \eqref{bseej_linear} has a unique solution in the sense of Definition \ref{defn:6.1}.
\end{lem}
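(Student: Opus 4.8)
The plan is to obtain existence by a Galerkin (finite-dimensional projection) approximation, for which the super-parabolic condition \eqref{eq:2.9} furnishes the uniform bounds, and then to read off uniqueness from the same energy estimate applied to the difference of two solutions, exactly as in the proof of Theorem~\ref{lem:1.4}.

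\emph{Galerkin approximation.} Since $V$ is separable and densely embedded in $H$, fix $\{\varphi_j\}_{j\ge1}\subset V$, orthonormal in $H$, with $\overline{\mathrm{span}}\{\varphi_j\}=V$; let $\Pi_n$ be the $H$-orthogonal projection onto $H_n:=\mathrm{span}\{\varphi_1,\dots,\varphi_n\}$, extended to $V^*$ by $\Pi_n v^*:=\sum_{j\le n}\langle v^*,\varphi_j\rangle\varphi_j$ (consistent, since on $H\subset V^*$ the duality pairing agrees with $(\cdot,\cdot)_H$). On $H_n$, on which all norms are equivalent, I would solve the finite-dimensional BSDE with jumps
\begin{equation*}
Y_n(t)=\Pi_n\xi-\int_t^T\Pi_n\big[A(s)Y_n(s)+B(s)Z_n(s)+F_0(s)\big]\,ds-\int_t^TZ_n(s)\,dW(s)-\int_t^T\!\!\int_ER_n(s,e)\,\tilde\mu(de,ds),
\end{equation*}
sought with $(Y_n,Z_n,R_n)$ valued in $H_n\times H_n\times M^{\nu,2}(E;H_n)$. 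By Assumption~\ref{ass:6.1}(iii) its driver is Lipschitz on $H_n$ (with an $n$-dependent constant), while $\Pi_n\xi\in L^2(\Omega,\mathscr F_T;H_n)$ and $\Pi_nF_0\in M^2_{\mathscr F}(0,T;H_n)$; hence the classical theory of finite-dimensional BSDEs with jumps (see \cite{tang1994necessary,barles1997backward}) gives a unique $\mathbb F$-predictable solution $(Y_n,Z_n,R_n)$.

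\emph{Uniform a priori estimates (the main obstacle).} Applying the finite-dimensional It\^o formula to $\|Y_n(t)\|_H^2$ and using $(\Pi_n v^*,\varphi)_H=\langle v^*,\varphi\rangle$ for $\varphi\in H_n$, one gets
\begin{equation*}
\|Y_n(t)\|_H^2+\int_t^T\!\big(\|Z_n\|_H^2+\textstyle\int_E\|R_n(\cdot,e)\|_H^2\nu(de)\big)ds=\|\Pi_n\xi\|_H^2-2\int_t^T\langle A(s)Y_n+B(s)Z_n+F_0,Y_n\rangle\,ds+(\text{mart.}),
\end{equation*}
the martingale part having zero mean after a routine localization. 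The delicate point—and the real reason the super-parabolic condition is stated with the extra term $\|B^*\phi\|_H^2$—is that the cross term $-2\langle B(s)Z_n,Y_n\rangle=-2(Z_n,B^*(s)Y_n)_H$ is \emph{not} of lower order: for small $\varepsilon>0$ I would bound it by $\frac{1}{1+\varepsilon}\|Z_n\|_H^2+(1+\varepsilon)\|B^*(s)Y_n\|_H^2$, combine the $(1+\varepsilon)\|B^*(s)Y_n\|_H^2$ with the $-\|B^*(s)Y_n\|_H^2$ supplied by \eqref{eq:2.9}, and dominate the leftover $\varepsilon\|B^*(s)Y_n\|_H^2\le\varepsilon C^2\|Y_n\|_V^2$ by the $-\alpha\|Y_n\|_V^2$ in \eqref{eq:2.9} once $\varepsilon C^2<\alpha$. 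Together with $-2\langle F_0,Y_n\rangle\le\|F_0\|_H^2+\|Y_n\|_H^2$, Gr\"onwall's inequality, and the Burkholder--Davis--Gundy inequality (as in the passage to \eqref{eq:6.10}), this yields a bound on
\[
\mathbb E\sup_{0\le t\le T}\|Y_n(t)\|_H^2+\mathbb E\int_0^T\|Y_n(t)\|_V^2dt+\mathbb E\int_0^T\|Z_n(t)\|_H^2dt+\mathbb E\int_0^T\!\!\int_E\|R_n(t,e)\|_H^2\nu(de)dt\le K\Big(\mathbb E\|\xi\|_H^2+\mathbb E\int_0^T\|F_0(t)\|_H^2dt\Big)
\]
that is \emph{uniform in} $n$.

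\emph{Passage to the limit and uniqueness.} The spaces $M^2_{\mathscr F}(0,T;V)$, $M^2_{\mathscr F}(0,T;H)$ and $M^{\nu,2}_{\mathscr F}(0,T;H)$ are Hilbert, hence reflexive, so along a subsequence $Y_n\rightharpoonup Y$, $Z_n\rightharpoonup Z$, $R_n\rightharpoonup R$ weakly; being closed subspaces of $L^2(\Omega\times{\cal T})$-type spaces they are weakly closed, so $(Y,Z,R)$ is still $\mathbb F$-predictable and has the integrability of Definition~\ref{defn:6.1}. Since $A\in\mathscr L(V,V^*)$ and $B\in\mathscr L(H,V^*)$ are bounded and predictable, $A(\cdot)Y_n\rightharpoonup A(\cdot)Y$ and $B(\cdot)Z_n\rightharpoonup B(\cdot)Z$ weakly in $M^2_{\mathscr F}(0,T;V^*)$. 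Fixing $j$, pairing the Galerkin identity with $\varphi_j$ for $n\ge j$, multiplying by an arbitrary bounded $\mathbb F$-adapted scalar process times an indicator in $t$, taking expectation, and letting $n\to\infty$ (each It\^o integral being a continuous linear functional of $Z_n$, resp.\ $R_n$) gives \eqref{eq:c5} with $\phi=\varphi_j$ for a.e.\ $(t,\omega)$; density of $\mathrm{span}\{\varphi_j\}$ in $V$ and continuity in $\phi$ then give \eqref{eq:c5} for every $\phi\in V$, so $(Y,Z,R)$ solves \eqref{bseej_linear}, and Lemma~\ref{lem:c1} provides an $H$-valued c\`adl\`ag modification. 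Finally, if $(Y,Z,R)$ and $(\bar Y,\bar Z,\bar R)$ are two solutions, their difference solves \eqref{bseej_linear} with $\xi=0$ and $F_0=0$; applying Lemma~\ref{lem:c1} to the squared $H$-norm of the difference and running the same energy estimate (now without source) with Gr\"onwall forces the difference, and then its $Z$- and $R$-parts, to vanish. The only genuine difficulty is the uniform a priori bound above; the rest is soft functional analysis.
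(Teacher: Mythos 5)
Your proposal is correct and follows essentially the same route as the paper: a Galerkin projection onto the span of an $H$-orthonormal basis contained in $V$, solvability of the resulting finite-dimensional BSDEs with jumps, a uniform energy estimate from It\^o's formula in which the cross term $-2\langle B Z_n, Y_n\rangle$ is absorbed using precisely the $\|B^*\phi\|_H^2$ term in the super-parabolic condition \eqref{eq:2.9}, weak compactness and passage to the limit in the weak formulation tested against bounded random variables and scalar processes, and uniqueness from the same energy estimate (the paper invokes Theorem \ref{lem:1.4} for this last step). No gaps.
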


\begin{proof}
First of all, we fix a standard complete orthogonal basis $\{e_{i} | i=1,2,3,\dots\}$ in the space $H$ which is dense  in the space $V$.
For any $n,$ consider the following
finite-dimensional backward stochastic differential
equation in $\mathbb R^n$:
\begin{align}\label{eq:6.18}
y_i^n(t)=& \ (\xi, e_i) - \int_t^T \bigg ( \sum_{j=1}^ny_j^n(s) \langle A (s) e_j, e_i \rangle
-\sum_{j=1}^n z_j^n(s) \langle B(s)e_j, e_i\rangle- (F_0(s), e_i)_H \bigg ) d s \nonumber \\
&-\int_t^T z_i^n (s) d W (s)+ \int_t^T \int_E r_i^n(s,e)\tilde \mu(ds,de) , \quad \ i = 1, 2, \cdots, n .
\end{align}
%
Under Assumption \ref{ass:6.1}, from the existence and uniqueness theory for
the finite-dimensional BSDE with jumps, the above equation admits a unique  strong solution
$(y^n(\cdot), z^n(\cdot), r^n(\cdot, \cdot))$ such that
\begin{eqnarray*}
(y^n(\cdot), z^n(\cdot), r^n(\cdot, \cdot))
\in S_{\mathscr F}^2 ( 0, T; \mathbb R^n )\times M_{\mathscr F}^2 ( 0, T; \mathbb R^n )\times M_{\mathscr F}^{\nu,2 }( 0, T; \mathbb R^n ),
\end{eqnarray*}
where
$y^n(\cdot)=(y_1^n(\cdot),\cdots,  y_n^n(\cdot))$, $z^n(\cdot)=(z_1^n(\cdot),\cdots,  z_n^n(\cdot))$, and $r^n(\cdot)=(r_1^n(\cdot),\cdots,  r_n^n(\cdot))$.

Now we can define an  approximation solution to \eqref{bseej_linear} as follows:
$Y^n(\cdot):=\sum_{i=1}^{n}y_{i}^{n}(\cdot)e_{i}$,
$Z^{n}(\cdot):=
\sum_{i=1}^{n}z^{n}_{i}(\cdot)e_{i}$,  $F^n(\cdot)=\sum_{i=1}^n( F(\cdot),e_{i})_H e_i$,
$R^{n}(\cdot):=
\sum_{i=1}^{n}r^{n}_{i}(\cdot)e_{i}$,
and $\xi^n:=\sum_{i=1}^{n} (\xi,e_{i})_He_i$.
Then, from Equation \eqref{eq:6.18}, we see that
\begin{eqnarray}\label{eq:6.21}
(Y^{n}(t),e_{i})_{H} &=&\left( \xi^n,e_{i}\right)_{H}-\int_{t}^{T}\bigg(\left\langle A( s) Y^{n}(s),e_{i}\right\rangle
+\left\langle B\left( s\right) Y^{n}(s),e_{i}\right\rangle +( F^n(s),e_{i})_H\bigg)ds \nonumber \\
&&-\int_{t}^{T}\left( Z^n(s),e_{i}\right) _{H}dW(s)-\int_t^T \int_{E}\left( R^{n}\left( s,e\right) ,e_{i}\right) _{H}\tilde{\mu}(ds,de), \quad i=1,2,\cdots, n .
\end{eqnarray}
Now applying It\^{o} formula to $\|Y^n(t)\|^2_H$, we get
\begin{eqnarray}\label{eq:6.2}
||Y^n (t)||_H^2&=&||\xi^n||^2-2\int_t^T\langle A(s)Y^n(s)+B(s)Z^n(s)+ F^n(s),  Y^n(s) \rangle ds \nonumber \\
&& - \int_t^T|| Z^n(s)||_H^2ds
-\int_t^T\int_E || R^n(s,e)||_H^2\nu(de)ds -2\int_t^T\langle  Z^n(s),  Y^n(s) \rangle dW(s) \nonumber \\
&&
-\int_t^T\int_E\big( ||R^n(s,e)||_H^2+ 2 (Y^n(s), R^n(s,e))_H  \big) \tilde \mu(ds,de) .
\end{eqnarray}
Therefore, under Assumption \ref{ass:6.1}, similar to the proof of the estimate \eqref{eq:6.10},  using Gr\"onwall's inequality
and the Burkholder-Davis-Gundy inequality, we can easily get the following estimate:
\begin{eqnarray}\label{eq:6.23}
&& {\mathbb E} \bigg [ \sup_{0 \leq t \leq T} \| Y^n (t) \|^2_H \bigg ]
+ {\mathbb E} \bigg [ \int_0^T \| Y^n (t) \|_V^2 d t \bigg ]
+ {\mathbb E} \bigg [ \int_0^T \| Z^n (t) \|^2_H d t \bigg ]+ {\mathbb E} \bigg [ \int_0^T\int_E \| R^n (t,e) \|^2_H  \nu (de)dt \bigg ] \nonumber \\
&& \qquad\qquad\qquad\qquad \leq K \bigg \{ {\mathbb E} [ \| \xi^n \|^2_H ] + {\mathbb E} \bigg
[ \int_0^T \| F^n( t) \|^2_H d t \bigg ] \bigg \} \leq K \bigg \{ {\mathbb E} [ \| \xi \|^2_H ] + {\mathbb E} \bigg
[ \int_0^T \| F ( t) \|^2_H d t \bigg ] \bigg \} .
\end{eqnarray}
This inequality implies that there is a subsequence $\{n^\prime\}$ of $\left\{ n\right\} $ and a triplet $( Y (\cdot), Z (\cdot), R(\cdot, \cdot) )
\in M_{\mathscr F}^2 ( 0, T; V )\times M_{\mathscr F}^2 ( 0, T; H )\times M_{\mathscr F}^{\nu,2 }( 0, T; \ H) $ such that
$Y^{n^{\prime }}\rightarrow Y$ weakly in $M_{\mathscr F}^2 ( 0, T; V )$,
$Z^{n^{\prime }} \rightarrow  Z$ weakly in $M_{\mathscr F}^2 ( 0, T; H )$,
and $R^{n^{\prime }} \rightarrow  R$ weakly in $M_{\mathscr F}^{\nu,2} ( 0, T; H )$, respectively.
Let $\Pi$ be an arbitrary bounded random variable on $(\Omega,\mathscr F)$ and
$\psi$ be an arbitrary bounded measurable function on $[0,T]$. From  the equality \eqref{eq:6.21}, for any $n\in \mathbb{N}^*$
and basis $e_{i}$, where $i\leq n$, we have
\begin{eqnarray}\label{eq:6.25}
&& \mathbb E \bigg [ \int_0^T \Pi \psi(t) (Y^{n}(t),e_{i})_{H}dt \bigg ] \nonumber \\
&& = \mathbb E \bigg [ \int_0^T \Pi \psi(t) \bigg\{\left( \xi^n,e_{i}\right)
_{H}-\int_{t}^{T}\big(\left\langle A\left( s\right) Y^{n}(s)
,e_{i}\right\rangle +\left\langle B\left( s\right) Y^{n}(s)
,e_{i}\right\rangle +( F(s),e_{i})_H\big)ds \nonumber \\
&& \quad -\int_{t}^{T}\left( Z^n(s),e_{i}\right) _{H}dW(s)-\int_t^T \int_E \left( R^{n}\left( s,e\right) ,e_{i}\right) _{H}\tilde{\mu}(ds,de)\bigg\}dt \bigg ] .
\end{eqnarray}
Now let $n'\rightarrow \infty$ on the both sides of the above equation to get its limit. Firstly, from the weak convergence
property of $\{Y^n\}_{n=1}^\infty$ in $M_{\mathscr F}^2 ( 0, T; V )$, we have
\begin{eqnarray}\label{eq:6.26}
    \lim_{n'\rightarrow \infty}\mathbb E \bigg [ \int_{0}^{T}\Pi\psi(t)(Y^{n'}(t), e_i)_H dt \bigg ]
    &=&\lim_{n'\rightarrow \infty}\mathbb E \bigg [ \int_{0}^{T}\mathbb E[\Pi|\mathscr F_t]\psi(t)(Y^{n'}(t), e_i)_H dt \bigg ] \nonumber \\
    &=&\lim_{n'\rightarrow \infty}\mathbb E \bigg [ \int_{0}^{T}(Y^{n'}(t),\mathbb E[\Pi|\mathscr F_t]\psi(t)e_i)_H dt \bigg ]  \nonumber \\
    &=&\mathbb E \bigg [ \int_{0}^{T}(Y(t),\mathbb E[\Pi|\mathscr F_t]\psi(t)e_i)dt \bigg ] \nonumber \\
    &=&\mathbb E \bigg [ \int_{0}^{T}\Pi\psi(t)(Y(t),e_i)dt \bigg ] ,
\end{eqnarray}
and
\begin{eqnarray}
    \lim_{n'\rightarrow \infty}\mathbb E \bigg [ \int_{t}^{T}\Pi\langle A(s)Y^{n'}(s), e_i\rangle ds \bigg ]
    &=&\lim_{n'\rightarrow \infty}\mathbb E \bigg [ \int_{t}^{T}\mathbb E[\Pi|\mathscr F_s]\langle A(s)Y^{n'}(s), e_i\rangle ds \bigg ] \nonumber \\
    &=&\lim_{n'\rightarrow \infty}\mathbb E \bigg [ \int_{t}^{T}\langle A(s)Y^{n'}(s),\mathbb E[\Pi|\mathscr F_s]e_i\rangle ds \bigg ] \nonumber \\
    &=&\lim_{n'\rightarrow \infty}\mathbb E \bigg [ \int_{t}^{T}\langle Y^{n'}(s), A^*(s)\mathbb E[\Pi|\mathscr F_s]e_i\rangle ds \bigg ] \nonumber \\
    &=&\mathbb E \bigg [ \int_{0}^{T}\langle Y(s), A^*(s)\mathbb E[\Pi|\mathscr F_s]e_i\rangle ds \bigg ] \nonumber \\
    &=&\mathbb E \bigg [ \int_{0}^{T}\Pi\langle A(s)Y(s),e_i\rangle ds \bigg ],
\end{eqnarray}
where the orders of integration, expectation, and limit can be exchanged due to integrability of related processes. More precisely, in view of \eqref{eq:6.5} and \eqref{eq:6.23}, we conclude that
\begin{eqnarray*}
\mathbb E \bigg [ \bigg| \int_{t}^{T} \Pi \la A(s)Y^{n'}(s), e_i\rangle ds\bigg| \bigg ]
\leq C \bigg \{ \mathbb E \bigg [ \int_{0}^{T} ||Y^{n'}(s)||_V^2ds \bigg ] \bigg\}^{\frac{1}{2}} <C<\infty,
\end{eqnarray*}
where the constant $C$ is independent of $n'$. Hence from Fubini's Theorem and Lebesgue's Dominated Convergence Theorem, we have
\begin{eqnarray}\label{eq:6.28}
&& \mathbb E \bigg [ \int_{0}^{T}\Pi\psi(t)\int_{t}^{T}\langle A(s)Y^{n'}(s),e_i\rangle ds dt \bigg ] \nonumber \\
&& = \int_{0}^{T} \psi(t) \mathbb E \bigg [ \int_{t}^{T} \Pi\langle A(s)Y^{n'}(s),e_i\rangle ds \bigg ] dt
\rightarrow \int_{0}^{T} \psi(t) \mathbb E \bigg [ \int_{t}^{T} \Pi\langle A(s)Y(s),e_i\rangle ds \bigg ] dt .
\end{eqnarray}
Similarly, we have
\begin{eqnarray} \label{eq:6.29}
\mathbb E \bigg [ \int_{0}^{T}\Pi\psi(t)\int_{t}^{T} \langle B(s)Y^{n'}(s), e_i\rangle ds dt \bigg ]
\rightarrow \mathbb E \bigg [ \int_{0}^{T}\Pi\psi(t)\int_{t}^{T} \langle B(s)Y(s), e_i\rangle ds dt \bigg ] .
\end{eqnarray}

Since the stochastic integrals with respect to the Brownian motion $W$ and the Poisson random martingale measure
$\tilde \mu$ are linear and strongly continuous mappings from $M_{\mathscr F}^2 ( t, T; \mathbb R )\times M_{\mathscr F}^{\nu, 2} ( t, T; \mathbb R )$ to $L^2(\Omega,{\mathscr F}_T,P;\mathbb R)$, respectively,
they are also weakly continuous. Therefore, it follows from the weak convergence property of $Z^n$ and $R^n$ that
\begin{eqnarray}
&& \lim_{n' \rightarrow \infty} \mathbb E\bigg[\Pi \bigg(\int_{t}^{T}\left( Z^n(s),e_{i}\right) _{H}dW(s)
+\int_t^T \int_{E}\left( R^{n}\left( s,e\right) ,e_{i}\right) _{H}\tilde{\mu}(de,ds)\bigg)\bigg] \nonumber \\
&& = \mathbb E \bigg[\Pi \bigg(\int_{t}^{T}\left( Z(s),e_{i}\right) _{H}dW(s)
+\int_t^T \int_{E}\left( R\left( s,e\right) ,e_{i}\right) _{H}\tilde{\mu}(de,ds)\bigg)\bigg] .
 \end{eqnarray}
Moreover,
\begin{eqnarray}
&&\phi(t)\mathbb E\bigg[\Pi \bigg(\int_{t}^{T}\left( Z^n(s),e_{i}\right) _{H}dW(s)
+\int_t^T \int_{E}\left( R^{n}\left( s,e\right) ,e_{i}\right) _{H}\tilde{\mu}(de,ds)\bigg)\bigg] \nonumber \\
&&\leq  \frac{1}{2}\phi^2(t)\mathbb E |\Pi|^2
+ C \bigg \{ \mathbb E \bigg [ \int_0^T||Z^n(s)||^2_Hdt \bigg ]
+\mathbb E \bigg [ \int_0^T\int_E||R^n(s,e)||^2_H\nu(de)dt \bigg ] \bigg \} \leq C.
\end{eqnarray}
Hence, by Fubini's Theorem and Lebesgue's Dominated Convergence Theorem, we have
\begin{eqnarray}\label{eq:6.33}
&& \lim_{n'\rightarrow \infty} \mathbb E \bigg[ \int_0^T \phi(t)\Pi \bigg(\int_{t}^{T}\left( Z^n(s),e_{i}\right) _{H}dW(s)
+\int_t^T \int_{E}\left( R^{n}\left( s,e\right) ,e_{i}\right) _{H}\tilde{\mu}(de,ds)\bigg)\bigg]dt \nonumber \\
&&=\lim_{n'\rightarrow \infty} \int_0^T \phi(t) \mathbb E\bigg[\Pi \bigg(\int_{t}^{T}\left( Z^n(s),e_{i}\right) _{H}dW(s)
+\int_t^T \int_{E}\left( R^{n}\left( s,e\right) ,e_{i}\right) _{H}\tilde{\mu}(de,ds)\bigg)\bigg] d t \nonumber \\
&&=\int_t^T\phi(t)\mathbb E
      \bigg[\Pi \bigg(\int_{t}^{T}\left( Z(s),e_{i}\right) _{H}dW(s)
+\int_t^T \int_{E}\left( R\left( s,e\right) ,e_{i}\right) _{H}\tilde{\mu}(de,ds)\bigg)\bigg]
\nonumber \\
&&=\mathbb E\int_t^T
      \bigg[\phi(t)\Pi \bigg(\int_{t}^{T}\left( Z(s),e_{i}\right) _{H}dW(s)
+\int_t^T \int_{E}\left( R\left( s,e\right) ,e_{i}\right) _{H}\tilde{\mu}(de,ds)\bigg)\bigg] .
 \end{eqnarray}

Therefore, combining \eqref{eq:6.26},\eqref{eq:6.28}, \eqref{eq:6.29}, and \eqref{eq:6.33}, and letting $n'\rightarrow \infty$ in \eqref{eq:6.25},
we conclude that
\begin{eqnarray}\label{eq:6.34}
&& \mathbb E \bigg [ \int_0^T \Pi \psi(t) (Y(t),e_{i})_{H}dt \bigg ] \nonumber \\
&& = \mathbb E \bigg [ \int_0^T \Pi \psi(t) \bigg\{\left( \xi(t),e_{i}\right)_{H}
-\int_{t}^{T}\big(\left\langle A (s) Y (s),e_{i}\right\rangle
+\left\langle B( s) Y (s),e_{i}\right\rangle +( F(s),e_{i})_H\big)ds  \nonumber \\
&& \quad -\int_{t}^{T}\left( Z(s),e_{i}\right) _{H}dW(s)
-\int \int_{(t,T]\times E}\left( R\left( s,e\right) ,e_{i}\right) _{H}\tilde{\mu}(de,ds)\bigg\}dt \bigg ] .
\end{eqnarray}
This implies that for a.s. $\left( t,\omega \right) \in \lbrack 0,T]\times \Omega$,
\begin{eqnarray}\label{eq:6.34}
(Y(t),e_{i})_{H} &=& \left( \xi(t),e_{i}\right)_{H}-\int_{t}^{T}\big(\left\langle A\left( s\right) Y\left( t\right),e_{i}\right\rangle
+\left\langle B\left( s\right) Y\left( t\right),e_{i}\right\rangle+( F(s),e_{i})_H\big)ds \nonumber \\
&& -\int_{t}^{T}\left( Z(s),e_{i}\right) _{H}dW(s) -\int_t^T \int_{E}\left( R\left( s,e\right) ,e_{i}\right) _{H}\tilde{\mu}(de,ds),
\end{eqnarray}
thanks to the arbitrariness of $\Pi$ and $\psi(\cdot)$.
Since the standard complete orthogonal basis $\{e_{i} | i=1,2,3,\dots\}$ in $H$ is dense in the space $V$, it holds that for every $\phi \in V$ and
a.e. $( t, \omega ) \in [ 0, T ] \times \Omega$,
\begin{eqnarray}\label{eq:6.34}
(Y(t),\phi)_{H} &=& \left( \xi(t),\phi\right)_{H}-\int_{t}^{T}\big(\left\langle A\left( s\right) Y\left( t\right),\phi\right\rangle
+\left\langle B\left( s\right) Y\left( t\right),e_{i}\right\rangle+( F(s),\phi)_H\big)ds \nonumber \\
&& -\int_{t}^{T}\left( Z(s),\phi\right) _{H}dW(s) -\int \int_{(t,T]\times E}\left( R\left( s,e\right) ,\phi\right) _{H}\tilde{\mu}(de,ds) .
\end{eqnarray}
Therefore, from Definition \ref{defn:6.1}, we conclude that the triplet $( Y (\cdot), Z (\cdot), R(\cdot, \cdot) )$
is the solution to the BSEEJ \eqref{eq:6.17}. Thus the existence is proved. The uniqueness is an immediate result of Theorem \ref{lem:1.4}.
\end{proof}

\begin{proof}[Proof of Theorem \ref{thm:6.1}]
We first take an arbitrary process $F_0 (\cdot) \in M^2_{\mathscr F} ( 0, T; H )$.
Given that $\rho \in [0, 1]$, consider the following BSEEJ:
\begin{eqnarray}\label{eq:1.7}
Y(t) &=& \xi - \int_t^T \big[ A (s) Y (s) +B(s)Z(s) +\rho F ( s, Y (s), Z (s), R( s, e )) + F_0 (s) \big ] d s  \nonumber \\
&& -\int_t^T Z (s) d W (s) -\int_t^T\int_E R (s,e) d \tilde \mu(de, ds) .
\end{eqnarray}
Note that the coefficients $( A, B,\rho F + F_0, \xi )$ of the BSEEJ \eqref{eq:1.7} satisfy
Assumption \ref{ass:6.1} with the same constants $\alpha, \lambda, C$.  If we can prove
that the BSEEJ \eqref{eq:1.7} admits a unique solution for any $\rho$ and $F_0$, then setting $\rho = 1$ and $F_0 (\cdot) \equiv 0$
yields that the BSEEJ \eqref{eq:2.7} has a unique solution $( Y (\cdot), Z (\cdot), R(\cdot, \cdot) )
\in S_{\mathscr F}^2 ( 0, T; V ) \times M_{\mathscr F}^2 ( 0, T; H )\times M_{\mathscr F}^{\nu, 2} ( 0, T; H )$.

Suppose for some $\rho=\rho_0$, the BSEEJ \eqref{eq:1.7} has a unique solution $( Y (\cdot), Z (\cdot), R (\cdot, \cdot) )
\in M_{\mathscr F}^2 ( 0, T; V ) \times M_{\mathscr F}^2 (0, T; H)\times M_{\mathscr F}^{\nu,2} ( 0, T; H )$,
for any $F_0 (\cdot) \in M^2_{\mathscr F} (0, T; H)$. For another $\rho$, the BSEEJ \eqref{eq:1.7} can be rewritten as
\begin{eqnarray}\label{eq:1.8}
Y(t) &=& \xi - \int_t^T \big[ A (s) Y (s)+B(s)Z(s)+\rho_0 F ( s, Y (s), Z (s), R( s, e ) )+ F_0 (s)\nonumber \\
&&+(\rho-\rho_0) F( s, Y (s), Z (s), R( s, e ) )\big ] d s -\int_t^T Z (s) d W (s) -\int_t^T\int_E R (s,e) d \tilde \mu(de, ds) .
\end{eqnarray}
For any $( y (\cdot), z (\cdot), r (\cdot, \cdot) ) \in M_{\mathscr F}^2 ( 0, T; V) \times M_{\mathscr F}^2 ( 0, T; H )
\times M_{\mathscr F}^{\nu,2} ( 0, T; H )$, the following BSEEJ
\begin{eqnarray}\label{eq:1.9}
Y(t) &=& \xi - \int_t^T \big[ A (s) Y (s)+B(s)Z(s)+\rho_0 F ( s, Y (s), Z (s),R( s, e ) )+ F_0 (s)\nonumber \\
&& +(\rho-\rho_0) F ( s, y (s), z (s), r( s, e ) )\big ] d s -\int_t^T Z (s) d W (s)-\int_t^T\int_E R (s,e) d \tilde \mu(de, ds)
\end{eqnarray}
has a unique solution $( Y (\cdot), Z (\cdot), R (\cdot, \cdot) ) \in M_{\mathscr F}^2 ( 0, T; V) \times M_{\mathscr F}^2( 0, T; H)
\times M_{\mathscr F}^{\nu,2} ( 0, T; H )$. Thus, we can define a mapping ${\cal I}$ from $M_{\mathscr F}^2 ( 0, T; V) \times
M_{\mathscr F}^2( 0, T; H)\times M_{\mathscr F}^{\nu,2} ( 0, T; H )$ onto itself such that ${\cal I} ( y (\cdot), z (\cdot) ,r(\cdot,\cdot))
= ( Y (\cdot), Z (\cdot), R(\cdot,\cdot) )$. Moreover, we see that $(Y(\cdot),Z(\cdot),R(\cdot,\cdot))$ is a solution of \eqref{eq:1.8} if and only if it is a fixed point of $\mathcal I$.

For any $( y_i (\cdot), z_i (\cdot), r_i(\cdot, \cdot) ) \in M_{\mathscr F}^2 ( 0,
T; V ) \times M_{\mathscr F}^2 ( 0, T; H )\times M_{\mathscr F}^{\nu, 2} ( 0, T; H )$, we can find $( Y_i
(\cdot), Z_i (\cdot), R_i(\cdot, \cdot) ) \in M_{\mathscr F}^2 ( 0, T; V ) \times
M_{\mathscr F}^2 ( 0, T; H ) \times M_{\mathscr F}^{\nu, 2} ( 0, T; H )$ through the mapping ${\cal I} ( y_i
(\cdot), z_i (\cdot), r_i(\cdot) ) = ( Y_i (\cdot), Z_i (\cdot), R_i(\cdot) )$, for $i = 1, 2$.
According to the a priori estimate \eqref{eq:6.10} and the Lipschitz continuity of $F$, we have
\begin{eqnarray}
&& {\mathbb E} \bigg [ \int_0^T \| Y _1(t)-Y_2(t) \|_V^2 d t \bigg ]
+ {\mathbb E} \bigg [ \int_0^T \| Z_1 (t)-Z_2(t) \|^2_H d t \bigg ]+ {\mathbb E} \bigg [ \int_0^T \| R _1(t,e)-R_2(t,e) \|^2_H  \nu (de)dt \bigg ] \nonumber\\
&& \leq K \big|\rho - \rho_0 \big|^2\mathbb E \bigg[\int_0^T \bigg\|F ( t, y_1 (t), z_1 (t),r_1 ( t,\cdot))
-F ( t, y_2 (t), z_2 (t),r_2 ( t,\cdot)) \bigg\|^2_Hdt\bigg] \nonumber \\
&& \leq K | \rho - \rho_0 |^2 \times \bigg\{{\mathbb E} \bigg [ \int_0^T \| y _1(t)-y_2(t) \|_V^2 d t \bigg ]
+ {\mathbb E} \bigg [ \int_0^T \| z_1 (t)-z_2(t) \|^2_H d t \bigg ] \nonumber \\
&& \qquad\qquad\qquad\qquad\qquad + {\mathbb E} \bigg [ \int_0^T \| r _1(t,e)-r_2(t,e) \|^2_H  \nu (de)dt \bigg ] \bigg\},
\end{eqnarray}
where $K \triangleq K ( C, \lambda, \alpha, \rho_0 )$ is a constant independent of $\rho$.
If $| \rho - \rho_0 |< \frac{1}{2 \sqrt {K}}$, the mapping ${\cal I}$ is strictly contractive in
$M_{\mathscr F}^2 ( 0, T; V ) \times M_{\mathscr F}^2 ( 0, T; H)\times M_{\mathscr F}
^{\nu,2} ( 0, T; H)$, which admits a fixed point. Hence, it implies that
the BSEEJ \eqref{eq:1.7} with the coefficients $( A,
B, \rho F + F_0, \xi )$ admits a unique solution
$( Y (\cdot), Z (\cdot), R(\cdot, \cdot) ) \in M_{\mathscr F}^2 ( 0, T; V ) \times M_{\mathscr F}^2 ( 0, T; H )\times M_{\mathscr F}^{\nu,2} ( 0, T; H)$.
From Lemma \ref{lem:6.4} , the uniqueness and existence of a solution to the BSEEJ
\eqref{eq:1.7} is true for $\rho=0$. Then starting from $\rho = 0$, we have that the BSEEJ
\eqref{eq:1.7} also admits a unique solution for $\rho \in [ \frac{i - 1}{2 \sqrt {K}}, \frac{i}{2 \sqrt {K}} )$,
$i = 1, 2, \cdots$. Therefore, setting $i = [2 \sqrt {K}] + 1$ and $F_0 (\cdot) \equiv 0$ leads to
that the BSEEJ \eqref{eq:1.7} with the coefficients $( A, B, F, \xi )$, i.e., the BSEEJ \eqref{eq:2.7},
has a unique solution $( Y (\cdot), Z (\cdot),
 R(\cdot)) \in M_{\mathscr F}^2 ( 0, T; V ) \times
M_{\mathscr F}^2 ( 0, T; H )\times M_{\mathscr F}^{\nu,2} ( 0, T; H)$. Moreover, from the a priori estimate \eqref{eq:2.15},
we obtain $ Y (\cdot) \in S_{\mathscr F}^2 ( 0, T; V )$. This completes the proof.
\end{proof}

\subsection{Stochastic HJB Equation}

In this subsection, we recast the stochastic HJB equation as a BSEEJ and then establish the existence and
uniqueness result of a weak solution in the sense of the Sobolev space. We see that the super-parabolic condition
\eqref{eq:2.9} is crucial for BSEEJ theory. Due to the limitation of this approach,
we can only deal with the special case, in which the coefficient $\sigma$ does not contain the control variable $u$. In addition, we need a nondegeneracy assumption on $\sigma$.

Let
\begin{eqnarray}
\sigma(\cdot)=(\sigma_1(\cdot),\sigma_2(\cdot),\cdots,\sigma_{d-1}(\cdot), \sigma_d(\cdot))
\end{eqnarray}
and
$$
\hat\sigma(\cdot)=(\sigma_1(\cdot),\sigma_2(\cdot),\cdots,\sigma_{d-1}(\cdot)) .
$$
Let us define a sub-filtration ${\mathbb G} : = \{ {\mathscr G } (t) | t \in {\cal T} \}$ of ${\mathbb F}$,
which is a $\mathbb P$-augmentation of the natural filtration generated by the one-dimensional Brownian motion
$W_d(\cdot)$ and the Poisson random measure $\tilde \mu(\cdot, \cdot)$. We assume that all the coefficients involved
in Problem \ref{pro:2.1}  are restricted to ${\mathbb G}$-predictable processes
or ${\mathscr G}_T$-measurable. The cost functional is still defined as follows:
\begin{eqnarray}\label{eq:6.39}
\mathbb J(t,\xi;u(\cdot))= Y^{t,\xi;u}(t),
\end{eqnarray}
where $(Y^{t,\xi;u},Z^{t,\xi;u},K^{t,\xi;u})$ solves
\begin{equation*}
\begin{split}
dY(s)=&-f(s,X^{t,\xi;u}(s),u(s),Y(s),Z(s),\int_E K(s,e)l(s,e)\nu(de))ds\\
&+\sum_{i=1}^{d}Z_i(s)dW_i(s)+\int_E K(s,e)\tilde \mu(de,ds), \quad Y(T)=h(X^{t,\xi;u}(T)).
\end{split}
\end{equation*}
The corresponding stochastic HJB equation is formally given by the following form:
 \begin{eqnarray*}
\left\{
\begin{aligned}
-d V(t,x)=& \ \bigg\{
\frac{1}{2} \mbox{tr} \big [ \sigma\sigma^\top(t,x)D^2V(t,x) \big]
+ \sum_{i=1}^d\langle \sigma_i(t,x), D\Phi_i(t,x)\rangle
+\inf_{u\in
	U}\bigg [\langle  b(t,x,u), DV(t,x)\rangle \\
&+f(t,x,u,V,\sigma^TDV+\Phi,\int_E\left( \mathcal I V(t,e,x,u)+\Psi(t,e,x+g)\right)l(t,e)\nu(de)) \\
&+\displaystyle\int_
{E}\big[\mathcal I V(t,e,x,u)-(g(t, e,x,u), D V(t,x))\big]\nu(d e)+\displaystyle\int_{E}\displaystyle \mathcal I\Psi(t,e,x,u)\nu(d e)\bigg]\bigg\}dt\\
&-\sum_{i=1}^d\Phi_i(t,x)dW_i(t)
-\displaystyle
\int_{E}\Psi( t,e,x)\tilde \mu (d e,dt),\\
V(T,x)=& \ h(x).
\end{aligned}\right.
\end{eqnarray*}
Since the randomness of the coefficients only comes from $W_d$ and $\tilde \mu$, it holds that $\Phi_i=0$ for $i=1,2,\cdots,d-1$. Thus, the above stochastic HJB equation reduces to
\begin{eqnarray}\label{eq:c41}
 \left\{
 \begin{aligned}
 -d V(t,x)=& \ \bigg\{
 \frac{1}{2} \mbox{tr} \big [ \sigma\sigma^\top(t,x)D^2V(t,x) \big]
 + \langle \sigma_d(t,x), D\Phi_d(t,x)\rangle
 +\inf_{u\in
 	U}\bigg [\langle  b(t,x,u), DV(t,x)\rangle \\
 &+f(t,x,u,V,\sigma^TDV+\Phi_d,\int_E\left( \mathcal I V(t,e,x,u)+\Psi(t,e,x+g)\right)l(t,e)\nu(de)) \\
 &+\displaystyle\int_
 {E}\big[\mathcal I V(t,e,x,u)-(g(t, e,x,u), D V(t,x))\big]\nu(d e)+\displaystyle\int_{E}\displaystyle \mathcal I\Psi(t,e,x,u)\nu(d e)\bigg]\bigg\}dt\\
 &-\Phi_d(t,x)dW_d(t)
 -\displaystyle
 \int_{E}\Psi( t,e,x)\tilde \mu (d e,dt),\\
 V(T,x)=& \ h(x).
 \end{aligned}\right.
\end{eqnarray}
Indeed, this is a Cauchy problem for semi-martingale backward stochastic partial differential equations in non-divergence form. Next we rewrite this equation in divergence form. Note
\begin{eqnarray*}
&\mbox{tr}[\sigma\sigma^\top D^2V(t,x)]= \nabla \cdot[\sigma\sigma^\top DV(t,x)]- \langle  \nabla \cdot(\sigma\sigma^\top)(t,x),  DV(t,x)\rangle , \\
&\langle \sigma_d, D\Phi_d(t,x)\rangle= \nabla  \cdot[\Phi_d \sigma_d ]-\Phi_d\nabla \cdot \sigma_d,
\end{eqnarray*}
where (with $\sigma=(\sigma_1, \cdots, \sigma_d),$ each $\sigma_i$ takes values in $\mathbb R^n$)
\begin{eqnarray*}
\nabla\cdot \sigma = (\nabla\cdot \sigma_1, \cdots, \nabla\cdot \sigma_d)^\top .
\end{eqnarray*}
In view of the above reduction, we have the following divergence form of the BSPDE (i.e., stochastic HJB equation):
\begin{eqnarray}\label{eq:6.17}
  \left\{
  \begin{aligned}
    -d V(t,x)=& \ \bigg\{\displaystyle
    \frac{1}{2}\nabla \cdot[\sigma \sigma^\top(t,x)DV(t,x)]
    + \nabla  \cdot[\Phi_d(t,x) \sigma_d(t,x)]-\langle  \nabla \cdot[\sigma\sigma^\top(t,x)],  DV(t,x) \rangle
    \\
    &-\nabla \cdot [\sigma_d(t,x)\Phi_d(t,x)]+\inf_{u\in
    	U}\bigg [\langle  b(t,x,u), DV(t,x)\rangle \\
    &+f(t,x,u,V,\sigma^TDV+\Phi_d,\int_E\left( \mathcal I V(t,e,x,u)+\Psi(t,e,x+g)\right)l(t,e)\nu(de)) \\
    &+\displaystyle\int_
    {E}\big[\mathcal I V(t,e,x,u)-(g(t, e,x,u), D V(t,x))\big]\nu(d e)+\displaystyle\int_{E}\displaystyle \mathcal I\Psi(t,e,x,u)\nu(d e)\bigg]\bigg\}dt\\
    &-\Phi_d(t,x)dW_d(t)
    -\int_{E}\Psi( t,e,x)\tilde \mu (d e,dt),\\
     V(T,x)=& \ h(x).
\end{aligned}
\right.
\end{eqnarray}
The following
definition gives the generalized weak solution to Eq. \eqref{eq:c41} or \eqref{eq:6.17}.

\begin{defn}\label{defn:6.2}
A triplet $(V, \Phi, \Psi)\in M_{\mathscr F}^2 ( 0, T; V )\times M_{\mathscr F}^2 ( 0, T; H )\times M_{\mathscr F}^{\nu,2 }( 0, T; H)$
is called an adapted weak solution to \eqref{eq:c41} or \eqref{eq:6.17} if,  for every $\phi \in H^1(\mathbb R^n)$ and a.e. $( t,
\omega) \in [ 0, T ] \times \Omega $, it holds that
\begin{eqnarray}\label{eq:6.48}
 \int_{\mathbb R^n}V(t,x)\phi(x)dx &=& \int_{\mathbb R^n}h(x)\phi(x)+ \int_t^T\int_{\mathbb R^n}\bigg\{
   -\bigg\langle \frac{1}{2}\sigma\sigma^\top(s,x)DV(s,x)+ \sigma_d(s,x)\Phi_d(s,x) , D\phi(x)\bigg\rangle \nonumber \\
    &&+\bigg[-
    \Big\langle \nabla \cdot[\sigma\sigma^\top(s,x)],  DV(s,x) \Big\rangle-\Phi_d(s,x)\nabla \cdot \sigma_d(s,x)+\displaystyle+\inf_{u\in
    	U}\bigg [\langle  b(s,x,u), DV(s,x)\rangle\nonumber \\
    &&+f(s,x,u,V,\sigma^TDV+\Phi_d,\int_E\left( \mathcal I V(s,e,x,u)+\Psi(s,e,x+g)\right)l(s,e)\nu(de)) \nonumber\\
    &&+\displaystyle\int_
    {E}\big[\mathcal I V(s,e,x,u)-(g(s, e,x,u), D V(s,x))\big]\nu(d e)+\displaystyle\int_{E}\displaystyle \mathcal I\Psi(s,e,x,u)\nu(d e)\bigg]\bigg\}dxds \nonumber \\
    &&-\int_t^T\int_{\mathbb R^n}\Phi_d(s,x)\phi(x)dW_d(s)
    -\int_t^T\int_{E}\int_{\mathbb R^n}
    \Psi( s,e,x)\phi(x)dx\tilde\mu (d e,ds) .
 \end{eqnarray}
\end{defn}

\begin{ass}\label{ass:6.2}
The diffusion coefficient $\hat \sigma$ is uniformly positive:
\begin{equation}\label{ieq:non-degen}
 \hat \sigma \hat \sigma^\top(t,x)\geq 2\alpha I, \quad \forall(t,x)\in [0, T]\times \mathbb R^n,
\end{equation}
where $\alpha$ is a  positive constant.
\end{ass}
Next we recall some preliminaries of Sobolev spaces.
For $m = 0, 1$, we define the space $H^m \triangleq \{ \phi:
\partial_z^\alpha \phi \in L^2 ( {\mathbb R}^n ), \ \mbox {for any}
\ \alpha: =( \alpha_1, \cdots, \alpha_n ) \ \mbox {with} \ |\alpha|
:= | \alpha_1 | + \cdots + | \alpha_n | \leq m \}$ with the norm
\begin{eqnarray*}
\| \phi \|_m \triangleq \left \{ \sum_{ |\alpha| \leq m } \int_{{\mathbb R}^d}
| \partial_z^\alpha \phi (z) |^2 d z \right \}^{\frac{1}{2}} .
\end{eqnarray*}
If we denote by $H^{-1}$ the dual space of $H^1$ and set $V = H^1$, $H= H^0$, $V^* = H^{-1}$, then $( V, H, V^* )$ is a Gelfand triple. We further need some assumptions on the coefficients.
\begin{ass}\label{ass:sobolev}
	There exists a constant $C$ such that
	\begin{itemize}
		\item $|\sigma(t,x,u)|\le C$, for any $(t,x,u) \in [0,T]\times \mathbb R^n \times U$;
		\item $\|b(t,\cdot,u)\|_H \le C$ and $\|\int_E g(t,e,\cdot,u)\nu(de)\|_H \le C$, for any $(t,u) \in [0,T] \times  U$;
		\item $f(\cdot,\cdot,0,0,0) \in M_{\mathscr F}^2 ( 0, T; H )$.
	\end{itemize}
\end{ass}
With these assumptions, we can apply our previous result about the BSEEJ, which leads to the following theorem.
\begin{thm}\label{thm:HJB}
Let Assumptions \ref{ass:2.1}-\ref{ass:c4} and \ref{ass:6.2}-\ref{ass:sobolev} be satisfied.
Then the stochastic HJB equation \eqref{eq:6.17}
has a unique weak solution $(V, \Phi, \Psi) \in M_{\mathscr F}^2 ( 0, T; V )\times M_{\mathscr F}^2 ( 0, T; H )\times
M_{\mathscr F}^{\nu,2 }( 0, T; H)$ in the sense of Definition \ref{defn:6.2}.
\end{thm}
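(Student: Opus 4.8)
The plan is to rewrite the divergence-form stochastic HJB equation \eqref{eq:6.17} as a backward stochastic evolution equation with jumps in the Gelfand triple $(V,H,V^*)=(H^1(\mathbb R^n),L^2(\mathbb R^n),H^{-1}(\mathbb R^n))$ and then invoke the existence--uniqueness Theorem \ref{thm:6.1}. Since in this subsection all data are restricted to be ${\mathbb G}$-adapted and ${\mathbb G}$ is generated by the scalar Brownian motion $W_d$ and $\tilde\mu$, the martingale representation forces $\Phi_i\equiv0$ for $i<d$, so the genuine unknown is $(V,\Phi_d,\Psi)$, which I will identify with the solution triplet $(Y,Z,R)$ of a BSEEJ driven by $W_d$ and $\tilde\mu$ over ${\mathbb G}$; the proof of Theorem \ref{thm:6.1} (Galerkin approximation plus the method of continuity) goes through verbatim with $W_d$ in place of $W$ and ${\mathbb G}$ in place of ${\mathbb F}$.

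First I would read off the generator $(A,B,F,\xi)$ from \eqref{eq:6.17} and its weak form \eqref{eq:6.48}. Put $\xi:=h$. Let $A(t):V\to V^*$ be given by $\langle A(t)v,\phi\rangle:=\int_{\mathbb R^n}[\tfrac{1}{2}\langle\sigma\sigma^\top(t,x)Dv,D\phi\rangle+\langle(\nabla\!\cdot[\sigma\sigma^\top])(t,x),Dv\rangle\,\phi]\,dx$, which is bounded because $|\sigma|\le C$ by Assumption \ref{ass:sobolev} and $|D_x\sigma|\le C$ a.e. by the Lipschitz bound in Assumption \ref{ass:2.1}, so $\nabla\!\cdot[\sigma\sigma^\top]\in L^\infty$. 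Let $B(t):H\to V^*$ be the first-order operator with $\langle B(t)z,\phi\rangle:=\int_{\mathbb R^n}[\langle\sigma_d(t,x)z,D\phi\rangle+z\,(\nabla\!\cdot\sigma_d)(t,x)\,\phi]\,dx$, so that $B^*(t)\phi=\nabla\!\cdot(\sigma_d\phi)=\langle\sigma_d,D\phi\rangle+\phi\,\nabla\!\cdot\sigma_d\in H$ for $\phi\in V$. Finally, let $F(t,v,z,r)$ collect the remaining nonlinear and non-local terms, namely $-\inf_{u\in U}$ of the sum $\langle b(t,\cdot,u),Dv\rangle+f\big(t,\cdot,u,v,\sigma^\top Dv+z,\int_E(\mathcal I v+r(e,\cdot+g))l(t,e)\nu(de)\big)+\int_E[\mathcal I v-(g,Dv)]\nu(de)+\int_E\mathcal I r\,\nu(de)$, with $\mathcal I$ the spatial non-local increment. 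One checks that $(A,B,F,\xi)$ have the required measurability and ranges; in particular the infimum over the compact set $U$ of a family of functions jointly measurable in the data and equi-Lipschitz in $(v,z,r)$ is again a legitimate map into $H$, so here (unlike in the verification theorem) no measurable selection of a minimiser is needed.

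Then I would verify Assumption \ref{ass:6.1}. Items (i) and (iii) are routine from the boundedness hypotheses in Assumption \ref{ass:sobolev} together with the integrability of $\xi=h$ and $F(\cdot,0,0,0)$ in the relevant $L^2(\mathbb R^n)$-based spaces (the latter reducing to $f(\cdot,\cdot,0,0,0)\in M^2_{\mathscr F}(0,T;H)$). For the Lipschitz continuity (iv) of $F$ --- measured in the $V$-norm in the $v$-slot, which is exactly why the BSEEJ framework tolerates the dependence of $F$ on $Dv$ --- I would combine the Lipschitz assumptions on $b,f,g$ in Assumption \ref{ass:2.2} with the change-of-variables bound $\int_{\mathbb R^n}|w(x+g(t,e,x,u))|^2\,dx\le\delta^{-1}\|w\|^2_{L^2}$, valid because Assumption \ref{ass:c4} makes $x\mapsto x+g$ a homeomorphism with $|\det(I+D_xg)|\ge\delta$; this controls the non-local terms $\mathcal I v$, $r(e,\cdot+g)$ and $\mathcal I r$ uniformly in $(e,u)$ after integrating against the finite measure $\nu$. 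The crux is item (ii), the super-parabolic inequality $2\langle A(t)\phi,\phi\rangle+\lambda\|\phi\|^2_H\ge\alpha\|\phi\|^2_V+\|B^*(t)\phi\|^2_H$. I would compute $\int_{\mathbb R^n}\langle\sigma\sigma^\top D\phi,D\phi\rangle\,dx=\int_{\mathbb R^n}\big(\sum_{i=1}^{d-1}|\langle\sigma_i,D\phi\rangle|^2+|\langle\sigma_d,D\phi\rangle|^2\big)\,dx$ and use Assumption \ref{ass:6.2}, $\hat\sigma\hat\sigma^\top\ge2\alpha I$, to get $\int_{\mathbb R^n}\sum_{i=1}^{d-1}|\langle\sigma_i,D\phi\rangle|^2\ge2\alpha\|D\phi\|^2_{L^2}$; since $\|B^*(t)\phi\|^2_H=\int_{\mathbb R^n}|\langle\sigma_d,D\phi\rangle+\phi\,\nabla\!\cdot\sigma_d|^2\,dx$, the $|\langle\sigma_d,D\phi\rangle|^2$-contributions on the two sides cancel exactly, the first-order term of $A$ and the remaining cross terms are absorbed into $\tfrac{1}{2}\alpha\|D\phi\|^2_{L^2}+C\|\phi\|^2_{L^2}$ by Young's inequality, and taking $\lambda$ large produces the inequality with a relabelled coercivity constant. \textbf{This is the main obstacle:} it is precisely here that the nondegeneracy of the first $d-1$ diffusion directions must compensate the extra $\|B^*\phi\|^2_H$ generated by the $W_d$-direction, which is why this subsection needs $\sigma$ to be control-free and $\hat\sigma$ nondegenerate.

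Finally, Theorem \ref{thm:6.1} yields a unique $(Y,Z,R)\in S^2_{\mathscr F}(0,T;V)\times M^2_{\mathscr F}(0,T;H)\times M^{\nu,2}_{\mathscr F}(0,T;H)$ solving the BSEEJ with generator $(A,B,F,\xi)$. Setting $(V,\Phi_d,\Psi):=(Y,Z,R)$ and $\Phi_i:=0$ for $i<d$, the defining identity \eqref{eq:c5} of Definition \ref{defn:6.1}, tested against an arbitrary $\phi\in V=H^1(\mathbb R^n)$, is by construction exactly the weak formulation \eqref{eq:6.48} in Definition \ref{defn:6.2}; hence $(V,\Phi,\Psi)$ is an adapted weak solution of \eqref{eq:6.17}, and its uniqueness is inherited directly from the uniqueness part of Theorem \ref{thm:6.1}.
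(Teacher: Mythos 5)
Your proposal is correct and follows essentially the same route as the paper: recast \eqref{eq:6.17} as a BSEEJ in the Gelfand triple $(H^1,L^2,H^{-1})$, verify Assumption \ref{ass:6.1} (with the coercivity (ii) supplied by the nondegeneracy of $\hat\sigma$ exactly as you describe, and the Lipschitz bound on the non-local terms via the change of variables using $|\det(I+D_xg)|\ge\delta$), and invoke Theorem \ref{thm:6.1}. The only difference is bookkeeping: the paper keeps $A$ and $B$ as the pure principal parts $\tfrac12\langle\sigma\sigma^\top Dw,D\varphi\rangle$ and $\langle\sigma_d\phi,D\varphi\rangle$ and places the first- and zeroth-order correction terms into $F$ (its $J_1$), whereas you absorb them into $A$ and $B$; both choices satisfy the hypotheses and lead to the same conclusion.
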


\begin{proof}
The proof is conducted by recasting the stochastic HJB equation \eqref{eq:6.17} as
the form   of the backward stochastic evolution equation. Under Assumptions \ref{ass:2.1}-\ref{ass:2.2}
and \ref{ass:6.2}, we define the mappings
$A: [ 0, T ] \times \Omega \rightarrow {\mathscr L} ( V, V^* ),$
$B: [ 0, T ] \times \Omega \rightarrow {\mathscr L} ( H, V^* ),$
$f: [0, T] \times \Omega \times V \times H \times V \times H \rightarrow H$, and $\xi: \Omega \rightarrow H$ by
\begin{eqnarray}\label{eq:6.44}
   &\langle A(t)w,\varphi\rangle
   =\frac{1}{2}\int_{R^n}\big\langle \sigma^\top(t,x)Dw(t,x), \sigma^\top (t, x) D\varphi(t, x)\big\rangle dx, \quad \forall \varphi,w\in V , \\
   &\langle B(t)\phi,\varphi\rangle
   =\int_{R^n} \langle \sigma_d(t,x)\varphi(t,x), D\varphi(t, x) \rangle dx,  \quad \forall \varphi\in V, \phi\in H ,
\end{eqnarray}
and
\begin{eqnarray*}
  F(t,w,\phi,\psi) &=& - \langle  \nabla \cdot[\sigma\sigma^\top(t,x)],  Dw(t,x) \rangle-\phi(t,x) \nabla \cdot [\sigma_d(t,x)]+\displaystyle+\inf_{u\in
  	U}\bigg [\langle  b(t,x,u), Dw(t,x)\rangle \\
  &&+f(t,x,u,V,\sigma^TDw+\phi,\int_E\left( \mathcal I w(t,e,x,u)+\psi(t,e,x+g)\right)l(t,e)\nu(de)) \\
  &&+\displaystyle\int_
  {E}\big[\mathcal I w(t,e,x,u)-(g(t, e,x,u), D w(t,x))\big]\nu(d e)+\displaystyle\int_{E}\displaystyle \mathcal I\psi(t,e,x,u)\nu(d e)\bigg],
\end{eqnarray*}
$\forall w\in V, \phi \in H, \psi\in M^{\nu,2}(H)$.

Using the above operators, we can rewrite the stochastic HJB equation \eqref{eq:6.17} as the following BSEEJ:
\begin{eqnarray}\label{eq:2.7}
\left\{
\begin{aligned}
d Y(t) =& \ \{ A (t) Y (t) +B(t)Z(t)+ F ( t, Y (t), Z (t), R(t, \cdot) ) \} d t + Z (t) d W_d (t)+ \int_E R(t,e)\tilde \mu(de,dt) ,  \ t \in [0, T] , \\
Y(T) =& \ \xi .
\end{aligned}
\right.
\end{eqnarray}
Then, \eqref{eq:6.48} can be written
as the following abstract formula:
\begin{eqnarray}\label{eq:c5}
( Y (t),  \phi )_H &=& (\xi, \phi)_H
- \int_t^T \Big\langle  A (s) Y (s) +B(s)Z(s)+F ( s, Y (s), Z (s), R(s, \cdot) ), \phi \Big\rangle d t \nonumber \\
&& - \int_t^T ( Z (s), \phi )_H d W (s)
 -\int_t^T\int_E (R(s,e), \phi)_H\tilde \mu(de,ds), \quad t \in [0, T] .
\end{eqnarray}
Therefore, we can apply the results in Subsection 6.1 to discuss the solvability of the stochastic HJB equation \eqref{eq:c41}.
In order to obtain the existence and uniqueness result for the backward stochastic evolution equation \eqref{eq:c46} by Theorem \ref{thm:6.1},
we need to check that Assumption \ref{ass:6.1} is satisfied. Indeed, Assumptions \ref{ass:6.1}(i) and \ref{ass:6.1}(iii) follow directly from Assumptions \ref{ass:2.1}-\ref{ass:2.2} and the definition
of mappings $A(t)$ and $F(t, w, \varphi,\psi)$.
Moreover, Assumption \ref{ass:6.1}(ii) on the coercivity of $A(t)$
is guaranteed by Assumption \ref{ass:6.2}.

From Assumption \ref{ass:6.2}, we have that for any $\phi\in H^1$,
\begin{eqnarray*}
  \langle A\phi, \phi\rangle- ||B^* \phi||^2_H
  &=&\frac{1}{2}\int_{R^n}\langle \sigma^\top(t,x)D\phi(t,x), \sigma^\top (t, x) D\phi(x)\rangle dx-\int_{R^n}|\sigma^\top_d(t,x)D\phi(t,x)|^2 dx \nonumber \\
  &=& \frac{1}{2}\int_{R^n}\langle  \hat \sigma\hat\sigma^\top(t,x)D\phi(t,x),D\phi(x)\rangle dx \nonumber \\
  &\geq& \alpha\int_{R^n}\langle D\phi(t,x),D\phi(t,x)\rangle dx \nonumber \\
  &=& \alpha||\phi||_V-\alpha||\phi||_H .
\end{eqnarray*}

Now it remains to check Assumption \ref{ass:6.1}(vi), i.e., the Lipschitz condition of the mapping $F$, is satisfied.
For notational simplicity, we denote by
\begin{eqnarray*}
J_1(t,w, \phi)&:=& -\langle  \nabla \cdot[\sigma\sigma^\top(t,\cdot)],  Dw(\cdot) \rangle-\phi(\cdot) \nabla \cdot [\sigma_d(t,\cdot)], \\
J_2(t,u,w)&:=&\langle  b(t,\cdot,u), Dw(\cdot)\rangle-\int_{E}\langle g(t, e,\cdot,u), D w(\cdot)\rangle\nu(d e) , \\
J_3(t,u, w)&:=&\int_{E}[ w( \cdot +g(t, e,\cdot ,u))- w(\cdot)]\nu(d e), \\
J_4(t,u,\psi)&:=& \int_{E}[\psi(e, \cdot+g(t, e,\cdot,u))-\psi(e, \cdot )]\nu(d e),\\
f(t,u,w,\phi,\psi)&:=&f(t,\cdot,u,\sigma^\top Dw+\phi,\int_E(\mathcal Iw(e,\cdot,u)+\psi(e,\cdot+g))l(t,e)\nu(de)).
\end{eqnarray*}
Using the above notations, we rewrite $F$ as
\begin{eqnarray}\label{eq:c46}
F(t,w,\phi, \psi)=J_1(t,w, \phi)+\inf_{u\in
U}\big(J_2(t,u,w)+J_3(t,u,w)+J_4(t,u,\psi)+f(t,u,w,\phi,\psi) \big) .
\end{eqnarray}
By the boundedness property of $b$, $f$, $\pi$, $\sigma$
and their derivatives, we see that there exists a
positive constant $C$ such that for any $w_1, w_2 \in V$, $\phi_1, \phi_2 \in
H, u\in U$ and a.e. $( t, \omega ) \in [ 0, T ]\times \Omega$,
\begin{eqnarray}\label{eq:6.51}
||J_1(t, w_1,\phi_1)-J_1(t, w_2, \phi_1)||_H\leq C ||w_1-w_2||_V+||\phi_1-\phi_2||_H,
\end{eqnarray}
and
\begin{eqnarray}\label{eq:6.52}
||J_2(t, u, w_1)-J_2(t, u,w_2)||_H\leq C ||w_1-w_2||_V.
\end{eqnarray}
Moreover, using the variable transformation and Assumption
\ref{ass:c4}, for any $w_1, w_2 \in V$, we have
\begin{eqnarray}\label{eq:6.53}
&& ||J_3 (t,u,w_1)-J_3(t,u,w_2) ||_H^2 \nonumber \\
&& = \int_{\mathbb R^n}\bigg | \int_{E}\big[ (w_1(x+g(t, e, x,u))-w_1(x))-
    (w_2(x+g(t, e,x,u))-w_2(x)) \big]\nu(d e)\bigg|^2dx \nonumber \\
&& = \int_{\mathbb R^n}\bigg |\int_{E}\big[(w_1(x+g(t, e, x,u))
    -w_2(x+g(t, e,x,u)))+(w_1(x)-w_2(x))\big]\nu(d e)\bigg|^2 dx \nonumber \\
&& \leq 2\nu (E)\int_
    {E}\int_{\mathbb R^n}\bigg|w_1(x+g(t, e,
    x,u))-w_2(x+g(t, e,x,u))\bigg|^2dx\nu (d e)
    +2\nu (E)\int_
    {E}\int_{\mathbb R^n}\bigg|w_1(x)
    -w_2(y)\bigg|^2dx \nu(d e) \nonumber \\
&& = 2\nu(E)\int_{E}\int_{R^n}|w_1(y)
    -w_2(y)|^2|\mbox{det}(I+\partial_xg(t, e,x,u)|^{-1}dy\nu(d e)
    +2\nu^2({E})||w_1-w_2||_H^2 \nonumber \\
&& \leq 2\nu^2({E})(1+{\delta}^{-1})||w_1-w_2||_H^2 \nonumber \\
&& \leq 2\nu^2({E})(1+{\delta}^{-1})||w_1-w_2\|_V^2.
\end{eqnarray}
Similarly, using variable transformation and Assumption
\ref{ass:c4}, we can easily obain
\begin{eqnarray}\label{eq:6.54}
||J_4(t,u,\psi_1)-J_4(t,u, \psi_2) ||_H^2\leq
 2\nu({E})(1+{\delta}^{-1})||\psi^1(x)-\psi^2(x)||_{M^{\nu,2}(E;H)}^2, \quad \forall (t,u)\in[0, T]\times U.
\end{eqnarray}
From previous result and Lipschitz continuity of $f$, we also have that
\begin{equation}\label{eq:6.55}
\|f(t,u,w_1,\phi_1,\psi_1)-f(t,u,w_2,\phi_2,\psi_2)\|_H^2\le C(\|w_1-w_2\|^2_V+\|\phi_1-\phi_2\|_H^2+\|\psi_1-\psi_2\|^2_{M^{\nu,2}(E;H)}).
\end{equation}
Finally, as in \cite{Peng92a}, the Lipschitz condition on the mapping $F$ can be easily derived from the above
inequalities (\ref{eq:6.51})-(\ref{eq:6.55}). Consequently, an application of Theorem \ref{thm:6.1} shows that the stochastic HJB equation
\eqref{eq:c41} has a unique solution.
\end{proof}
\begin{rmk}
The above result can be extended to a  more general case, where the randomness of the coefficients comes from part, but not all, of the Brownian motions. More precisely, let $d_1,d_2$ be two integers such that $d_1 \ge 1$ and $d_1+d_2=d$.  Assume that all the coefficients are predictable with respect to the filtration generated by the Brownian motions $(W_{d_1+1},\cdots,W_{d})$ and the Poisson random measure $\tilde \mu$.  We further assume $\tilde\sigma(\cdot):=(\sigma_1(\cdot),\sigma_2(\cdot),\cdots,\sigma_{d_1}(\cdot))$ satisfies the non-degenerate assumption \eqref{ieq:non-degen}. Then, the corresponding backward HJB equation admits a Sobolev solution. The proof for such a result is almost the same as that of Theorem \ref{thm:HJB} with a minor modification. Thus, we do not repeat it here.
\end{rmk}

\section{Conclusion}

In this paper, we study the stochastic HJB equation with random coefficients and jumps. We prove that the value function is a solution for the stochastic HJB equation if some regularity assumptions are satisfied. The idea of the proof is motivated by the method used by Tang \cite{tang2015dynamic} which studied the Riccati equation for the stochastic LQ problem. The stochastic Riccati equation is a special example of the stochastic HJB equation. However, we have to mention that our result do not include the LQ problem as a special case since some assumptions, like compact control region and linear growth generator $f$, do not hold in that case. These technical assumptions ensure that our proof is rigorous. In Zhang et al. \cite{zhang2020backward}, the authors proved the solvability of backward stochastic Riccati equation with random jumps. Their basic idea is similar to ours, but the proof heavily relies on the linear-quadratic structure of their problem. Thus, the result of Zhang et al \cite{zhang2020backward} is stronger than ours in the sense that the semi-martingale structure of value function is proved in \cite{zhang2020backward}, while we make it as an assumption. To our best knowledge, the question that under which conditions the value function is a semi-martingale remains an open problem. We also consider the stochastic HJB equation in the Sobolev space under some non-degenerate assumption which is standard in the study of BSPDEs. For the most general case, a classical or Sobolev solution is still an open problem even for deterministic case. People often consider the so-called viscosity solution. For BSPDE case, there are just few paper on this subject. The readers are referred to \cite{qiu2018viscosity} for more discussion.

\bibliographystyle{plain}

\end{document}